\newcommand{\Hyp}{\mathbb{H}}
\newcommand{\R}{\mathbb{R}}
\newcommand{\D}{\mathbb{D}}
\newcommand{\G}{\Gamma}
\newcommand{\g}{\gamma}
\newcommand{\tv}{\rightarrow}
\newcommand{\scal}[2]{\left\langle {#1} \middle| {#2} \right\rangle}
\newcommand{\fonction}[5]{\begin{array}{l|ccl}
{#1} : & {#2} & \longrightarrow & {#3} \\
        & {#4} & \longmapsto & {#5} 
    \end{array}
    }
\newtheorem{theorem}{Theorem}[section]
\newtheorem{lemme}[theorem]{Lemma}
\newtheorem{proposition}[theorem]{Proposition}
\newtheorem{Corollaire}[theorem]{Corollary}
\newtheorem{definition}[theorem]{Definition}
\DeclareMathOperator{\Leb}{Leb}
\DeclareMathOperator{\Isom}{Isom}
\DeclareMathOperator{\Card}{Card}
\DeclareMathOperator{\PSL}{PSL}
\DeclareMathOperator{\vol}{Vol}
\DeclareMathOperator{\Vol}{Vol }
\DeclareMathOperator{\Met}{Met }
\newtheorem*{corollary*}{Corollary}
\newtheorem*{theorem*}{Theorem}
\title{Entropy of embedded surfaces in quasi-fuchsian manifolds}
\author{Olivier Glorieux}
\begin{document}

\maketitle
\begin{abstract}
We compare critical exponent for quasi-Fuchsian groups acting on the hyperbolic 3-space and entropy of invariant disks embedded in $\Hyp^3$. We give a rigidity theorem for all embedded surfaces when the action is Fuchsian and a rigidity theorem for negatively curved surfaces when the action is quasi-Fuchsian. 
\end{abstract}
 \section{Introduction}
The aim of this paper is to compare two geometric invariants of Riemannian manifolds: critical exponent and volume entropy. The first one is defined through the action of the fundamental group on the universal cover, the second one is defined for compact manifolds as the exponential growth rate of the volume of balls in the universal cover. These two invariants have been studied in many cases, we pursue this study for quasi-Fuchsian manifolds. 

Let $\G$ be a group acting on a simply connected Riemannian manifold $(X,g)$. If the action on $X$ is discrete we define the \emph{critical exponent }  by 
\begin{eqnarray}\label{def,eq - critical exponent}
\delta(\G) := \limsup_{R\tv \infty} \frac{1}{R} \Card \{ \g\in \G \, | \, d(\g\cdot o , o) \leq R\},
\end{eqnarray}

where $o$ is any point in $X$. It does not depends  on this particular base point thanks to triangle inequality. If we want to insist on the space on which $\G$ acts we will write $\delta(\G, X)$. 

The volume entropy $h(g)$ of a Riemannian compact manifold $(\Sigma, g)$  is defined by 
\begin{eqnarray}\label{def-eq - volume entropy}
h(g) := \lim_{R\tv \infty} \frac{\log \Vol_g( B_g(o,R))}{R},
\end{eqnarray}
where $B_g(o,R)$ is the ball of radius $R$ and center $o$ in the universal cover of $\Sigma$. We will also use the notation $h(X)$ for simply connected manifolds  $X$ as the exponential growth rate of its balls. 

It is a classical fact, using a simple volume argument that the volume entropy coincides with the critical exponent of $\pi_1(\Sigma)$ acting on $\widetilde{\Sigma}$. Moreover, a famous theorem of G. Besson, G. Courtois and S. Gallot \cite{besson1995entropies} said that the entropy allows to distinguish hyperbolic metric in the set of all metrics, $\Met(\Sigma)$. Remark that entropy is sensitive to homothetic transformations : for any $\lambda>0$ we have $h(\lambda^2 g) =\frac{1}{\lambda} h(g)$.  Assume that $\Sigma$ admits an hyperbolic metric $g_0$ and let $\Met_0(\Sigma)$ be the set of metrics on $\Sigma$ whose volume is equal to $\Vol(\Sigma,g_0)$, then Besson, Courtois, Gallot's Theorem says for all $g\in \Met_0(\Sigma) $  : 
\begin{eqnarray}\label{eq -BCG}
h(g)\geq h(g_0).
\end{eqnarray}
with equality if and only if $g=g_0$. 
\\

%
%

Our aim is to study the behavior of the volume entropy for a subset of all the metrics on a surface.  This subset is the metrics induced by an incompressible embedding into a quasi-Fuchsian manifolds. It has not the cone structure of $\Met(\Sigma)$ : it is not invariant by all homothetic transformations. Hence we will look at the behavior of $h(g)$ without normalization by the volume. 
\\

Let $S$ be a compact surface of genus $g\geq 2$ and $\G=\pi_1(S)$ its fundamental group. A Fuchsian representation of $\G$ is a  faithful and discrete representation in $\PSL_2(\R)$. A quasi-Fuchsian representation is a perturbation of Fuchsian representation in $\PSL_2(\mathbb{C})$. More precisely it is a discrete and faithfull representation of $\G$  into $\Isom(\Hyp^3)$, such that the limit set on $\partial \Hyp^3$ is a Jordan curve.  A celebrated theorem of R. Bowen \cite{bowen1979hausdorff}, asserts that for quasi-Fuchsian representations, critical exponent is minimal and  equal to $1$ if and only if the representation is Fuchsian.

We choose an isometric, totally geodesic embedding of $\Hyp^2$ in $\Hyp^3$ (The equatorial plane in the ball model for example). This embedding gives a inclusion 
$i : \Isom(\Hyp^2)\tv \Isom(\Hyp^3)$. 

Let $\rho$ be a Fuchsian representation of $\G$. 
The group $\G$ acts naturally on $\Hyp^2$, respectively $\Hyp^3$, by $\rho$, respectively $i\circ \rho$. For every points $o\in \Hyp^2$ we have
$$d_{\Hyp^3} (i\circ \rho(\g) o , o ) =d_{\Hyp^2} (\rho(\g) o,o),$$
since $\Hyp^2$ is totally geodesic in $\Hyp^3$. The critical exponent for these two actions of $\G$ are then equal 
$$\delta(\G , \Hyp^3) =\delta(\G ,\Hyp^2) =1.$$

In light of this trivial example, two questions rise up. What is the entropy of a $\G$ invariant disk which is not totally geodesic ? What happens when we modify the Fuchsian representation in $\PSL_2(\mathbb{C})$ ?

We will answer to the first question. Since $ \rho$ is a Fuchsian representation, the critical exponent of $\G$ acting on $\Hyp^3$ through $i\circ \rho$ is $1$, and we have the following 

\begin{theorem}\label{th - main fuchsian}
Suppose $\G$ is Fuchsian. Let $\Sigma$ be a $\G$ invariant disk embedded in $\Hyp^3$. We have 
\begin{eqnarray}
h(\Sigma)  \leq \delta(\G,\Hyp^3),
\end{eqnarray} 
equality occurs if and only if $\Sigma$ is the totally geodesic hyperbolic plane preserved by $\G$.
\end{theorem}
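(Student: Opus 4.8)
The plan is to separate the statement into the inequality, which is soft, and the equality case, which is where all the work lies.

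\emph{The inequality.} Since $\G\cong\pi_1(S)$ is a surface group of genus $g\geq 2$ it is not free, so the boundaryless surface $S':=\Sigma/\G$ (whose fundamental group is $\G$) is compact; thus $\Sigma$ is the universal cover of the closed surface $(S',g_\Sigma)$, $g_\Sigma$ being the metric induced by the embedding, and $h(\Sigma)=h(S',g_\Sigma)$. By the classical volume argument recalled in the introduction, $h(\Sigma)=\de(\G,\Sigma)$ for the action of $\G$ on $(\Sigma,g_\Sigma)$. Now the inclusion $(\Sigma,d_{g_\Sigma})\hookrightarrow(\Hyp^3,d_{\Hyp^3})$ is $\G$-equivariant and $1$-Lipschitz (the intrinsic distance dominates the ambient one), so for any base point $o\in\Sigma$ one has $\{\g\in\G:d_{g_\Sigma}(\g o,o)\le R\}\subseteq\{\g\in\G:d_{\Hyp^3}(\g o,o)\le R\}$, and passing to exponential growth rates gives $\de(\G,\Sigma)\le\de(\G,\Hyp^3)$, which is the inequality of the theorem.

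\emph{Set-up and a first reduction for the equality case.} Let $P\cong\Hyp^2$ be the $\G$-invariant totally geodesic plane, so $\de(\G,P)=\de(\G,\Hyp^3)=1$, and let $\pi:\Hyp^3\to P$ be the nearest-point retraction. Then $\pi$ is $\G$-equivariant, $1$-Lipschitz, and its differential at a point at distance $\rho$ from $P$ has norm $1/\cosh\rho$; in particular it contracts strictly off $P$, and $\|d\pi_p v\|<\|v\|$ for every nonzero $v$ unless $p\in P$ and $v$ is tangent to $P$. Write $f:=\pi|_\Sigma:\Sigma\to P$, a $\G$-equivariant $1$-Lipschitz map for $g_\Sigma$. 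Assume now $h(\Sigma)=1$ and, to prove the rigidity, suppose $\Sigma\ne P$. First, $\Sigma$ must meet $P$: otherwise $\rho=d(\cdot,P)$ would be a positive $\G$-invariant function on $\Sigma$, hence bounded below by some $\rho_0>0$ by cocompactness, so $f$ would be $(\cosh\rho_0)^{-1}$-Lipschitz and $\de(\G,\Sigma)\le(\cosh\rho_0)^{-1}\de(\G,P)<1$. More generally, fixing a small $\epsilon>0$ with $U:=\{p\in\Sigma:d(p,P)>\epsilon\}$ nonempty (possible as $\Sigma$ is connected and $\Sigma\ne P$), $U$ is open and $\G$-invariant and $f$ is $(1-\eta)$-Lipschitz on it with $\eta:=1-1/\cosh\epsilon>0$; comparing, for $\g\in\G$, the $g_\Sigma$-length of a minimizing geodesic $c_\g$ from $o$ to $\g o$ with the length of $f\circ c_\g$ yields
\[
d_{g_\Sigma}(\g o,o)\ \ge\ d_P(\pi o,\pi\g o)\,+\,\eta\,\ell(c_\g\cap U),
\]
where $\ell(c_\g\cap U)$ is the length $c_\g$ spends in $U$. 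Splitting the orbit count in the ball of radius $R$ according to whether $\ell(c_\g\cap U)>\beta R$ or not shows that the contribution of the elements with $\ell(c_\g\cap U)>\beta R$ is at most $\Card\{\g:d_P(\pi o,\pi\g o)\le(1-\eta\beta)R\}\lesssim e^{(1-\eta\beta)R}$, hence exponentially negligible; so $\de(\G,\Sigma)=1$ forces the orbit growth to be carried, to exponential order, by the $\g$ whose geodesic representative spends a vanishing proportion of its length inside $U$.

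\emph{The main obstacle.} The heart of the proof will be to convert this ``concentration outside $U$'' into the rigid conclusion that $f$ preserves the $g_\Sigma$-length of \emph{every} geodesic segment of $\Sigma$ — equivalently, that removing any nonempty $\G$-invariant open subset of $\Sigma$ strictly lowers the exponential orbit growth. I would attempt this in one of two ways. The first is ergodic/thermodynamic: show that the orbit-counting measures on $S'$ equidistribute to a flow-invariant measure of full support, so no nonempty open set can be avoided, even with vanishing frequency, without losing exponential growth. The second is to invoke the theorem of Besson--Courtois--Gallot \cite{besson1995entropies} for the degree-one map $\bar f:(S',g_\Sigma)\to P/\G$, giving $h(\Sigma)^2\,\vol(S',g_\Sigma)\ge\vol(P/\G)$ with equality only when $g_\Sigma$ is hyperbolic, and then to upgrade $h(\Sigma)=1$ to the volume equality $\vol(S',g_\Sigma)=\vol(P/\G)$ using the Gauss equation $K_{g_\Sigma}=-1+\kappa_1\kappa_2$ and Gauss--Bonnet, which together give $\vol(S',g_\Sigma)=\vol(P/\G)+\int_{S'}\kappa_1\kappa_2$, combined with the strict-contraction estimate above to rule out the strictly positive case. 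I expect this upgrade to be the genuinely delicate point, since equality of critical exponents is by itself too weak to force any pointwise statement.

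\emph{Conclusion.} Granting that $f$ preserves the length of every geodesic segment $c$ of $\Sigma$: by the pointwise property of $d\pi$ recorded above, each such $c$ must lie in $P$. Running over all geodesics through a fixed $x\in\Sigma$ in all directions, a neighbourhood of $x$ in $\Sigma$ lies in $P$; hence $\Sigma\cap P$ is open and closed in the connected surface $\Sigma$, so $\Sigma\subseteq P$, and since both are complete and $\Sigma$ is a disk, $\Sigma=P$. Conversely, if $\Sigma=P$ then $d_{g_\Sigma}=d_{\Hyp^3}$ on $\Sigma$ and $h(\Sigma)=\de(\G,P)=\de(\G,\Hyp^3)=1$, so equality indeed holds.
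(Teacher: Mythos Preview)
Your inequality and initial reduction are correct and parallel the paper's. The gap is precisely where you place it, and neither of your two suggestions closes it as stated. The first --- equidistribution for the geodesic flow on $S'=\Sigma/\G$ --- runs into the obstacle the paper flags explicitly: there is \emph{no} curvature hypothesis on $\Sigma$, so the geodesic flow on $T^1 S'$ need not be Anosov or have a unique measure of maximal entropy, and your minimizing segments $c_\g$ are not orbits of any well-behaved flow. The paper's key move is to leave $\Sigma$ entirely and push the dynamics to the totally geodesic plane $P\cong\Hyp^2$. It defines a discontinuous weighted metric $g_m$ on $P$ (equal to $\cosh^2(\eta)\,g_0$ on a $\G$-invariant union of small balls over which $\Sigma$ sits at height $\geq\eta$, and $g_0$ elsewhere), proves $d_\Sigma(x,y)\geq d_m(\pi x,\pi y)$ --- the exact analogue of your displayed inequality --- so that $h(\Sigma)\leq\delta(P,d_m)$, and then shows $\delta(P,d_m)<1$ using Kifer's large-deviation theorem for the \emph{hyperbolic} geodesic flow on $T^1(\Hyp^2/\G)$. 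Since Liouville is the unique measure of maximal entropy there, the set of directions whose orbit up to time $R$ under-visits the weighted balls has measure $\leq e^{-\alpha R}$; a separate geometric lemma gives $d_m\geq C\,d_{\Hyp^2}$ with $C>1$ along the remaining directions, and the two estimates together bound $\vol B_m(o,R)$ by $O(e^{(1-u)R})$. All the ergodic theory happens on $\Hyp^2$, never on $\Sigma$.

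Your second route via \cite{besson1995entropies} is also problematic: the sharp entropy-volume inequality with rigidity is a theorem in dimension $\geq 3$; in dimension $2$ the available result (Katok) requires $g_\Sigma$ to be negatively curved, which is exactly what you cannot assume here. Even granting some version, the Gauss--Bonnet identity $\vol(S')=\vol(P/\G)+\int_{S'}\kappa_1\kappa_2$ does not help, since $\kappa_1\kappa_2$ has no sign for a general embedded surface in $\Hyp^3$; so $h(\Sigma)=1$ together with $\vol(S')\geq\vol(P/\G)$ does not force equality of volumes, and the rigidity clause never fires.
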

Remarks that $\delta(\G,\Hyp^3) = h(\Sigma,g_0)$, hence the last theorem can be rewritten as follow :  
\begin{theorem}
For all metrics $g$ obtained as induced metrics by an incompressible embedding in a Fuchsian manifold we have 
\begin{eqnarray}\label{eq - inverse de BCG sans normalisation}
h(g)\leq h(g_0)
\end{eqnarray}
with equality if and only if $g=g_0$. 
\end{theorem}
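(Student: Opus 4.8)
\emph{Strategy.} The inequality \eqref{eq - inverse de BCG sans normalisation} is soft and reduces to a one-line distance comparison; the equality case is the substance. Throughout, let $\bar g$ be the metric induced on $S$ by the incompressible embedding, so that its Riemannian universal cover is $(\Sigma,d_\Sigma)$, where $d_\Sigma$ is the length metric induced on $\Sigma$ by the ambient metric of $\Hyp^3$, and $\G=\pi_1(S)$ acts on $\Sigma$ by $i\circ\rho$. By the classical volume/Milnor--\v{S}varc argument recalled in the introduction, $h(\bar g)=h(\Sigma)=\de(\G,d_\Sigma)$.

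\emph{Step 1: the inequality.} A path contained in $\Sigma$ has the same length measured in $d_\Sigma$ or in the ambient metric, hence length at least the ambient distance of its endpoints; therefore $d_\Sigma(x,y)\geq d_{\Hyp^3}(x,y)$ for all $x,y\in\Sigma$. Fixing $o\in\Sigma$, this gives an inclusion $\{\g\in\G:d_\Sigma(o,\g o)\leq R\}\subseteq\{\g\in\G:d_{\Hyp^3}(o,\g o)\leq R\}$ for every $R$, whence $h(\bar g)=\de(\G,d_\Sigma)\leq\de(\G,\Hyp^3)=h(g_0)$.

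\emph{Step 2: reduction of the equality case to the marked length spectrum.} Assume $h(\bar g)=h(g_0)$. For $\g\in\G$ let $\ell_{\bar g}(\g)=\inf_{x\in\Sigma}d_\Sigma(x,\g x)$ and $\ell_{g_0}(\g)=\inf_{x\in\Hyp^3}d_{\Hyp^3}(x,\g x)$; the latter is the translation length of $i\circ\rho(\g)$, realized along its axis, which lies in the $\G$-invariant totally geodesic plane $\Hyp^2$. Step 1 gives $\ell_{\bar g}(\g)\geq\ell_{g_0}(\g)$ for all $\g$. I claim $h(\bar g)=h(g_0)$ forces $\ell_{\bar g}=\ell_{g_0}$. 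Since $\Sigma/\G$ is the compact surface $S$, the orbit $\G o$ is cocompact in $\Sigma$, so $\Sigma$ lies in a bounded neighbourhood of $\G o$ and its accumulation set on $\partial\Hyp^3$ is exactly $\La=\partial\Hyp^2$. Consequently the $1$-Lipschitz, $\G$-equivariant nearest-point projection $p\colon\Hyp^3\to\Hyp^2$ restricts to a proper map $\Sigma\to\Hyp^2$ that extends to the identity on $\La$, hence has degree one; being $1$-Lipschitz, coarsely onto, and $\G$-equivariant on two $\G$-cocompact spaces, $p|_\Sigma$ is a $\G$-equivariant quasi-isometry $(\Sigma,d_\Sigma)\to\Hyp^2$. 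Now suppose $\ell_{\bar g}\neq\ell_{g_0}$; then $\Sigma\neq\Hyp^2$, so (both being complete surfaces) $\Sigma\cap\Hyp^2$ is a proper closed $\G$-invariant subset of $\Hyp^2$, and the height function $v\mapsto\sup\{d(q,\Hyp^2):q\in\Sigma,\ p(q)=v\}$ exceeds some $t_0>0$ on a nonempty open $\G$-invariant set $V\subseteq\Hyp^2$. One shows, using the quasi-isometry $p|_\Sigma$ and hyperbolic trigonometry, that a closed $\bar g$-geodesic representing $[\g]$ whose $\Hyp^2$-shadow spends length $\tau$ inside $V$ has length at least $\ell_{g_0}(\g)+c(t_0)\,\tau$ for a constant $c(t_0)>0$. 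By equidistribution of closed geodesics on the Fuchsian surface $\Hyp^2/\G$, a definite proportion of the conjugacy classes with $\ell_{g_0}(\g)\leq T$ have shadow spending length $\geq\tau_0>0$ in $V$; comparing the counting functions of $\ell_{\bar g}$ and $\ell_{g_0}$ (whose exponential rates are $\de(\G,d_\Sigma)$ and $\de(\G,\Hyp^3)=1$) then yields $\de(\G,d_\Sigma)<1$, contradicting the hypothesis. Hence $\ell_{\bar g}=\ell_{g_0}$.

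\emph{Step 3: from $\ell_{\bar g}=\ell_{g_0}$ to $\Sigma=\Hyp^2$.} If $\ell_{\bar g}(\g)=\ell_{g_0}(\g)$, the infimum defining $\ell_{\bar g}(\g)$ is attained at some $x\in\Sigma$, and equality in $d_\Sigma(x,\g x)\geq d_{\Hyp^3}(x,\g x)\geq\ell_{g_0}(\g)$ forces $x$ to lie on the axis of $\g$ and the $d_\Sigma$-geodesic $[x,\g x]$ to coincide with the axis segment. Translating by $\langle\g\rangle$, the whole axis of $\g$ lies in $\Sigma$. Since $\G$ is cocompact Fuchsian, the union of the axes of its elements is dense in $\Hyp^2$, and $\Sigma$ is closed, so $\Hyp^2\subseteq\Sigma$; both being complete surfaces, $\Sigma=\Hyp^2$. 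Conversely, if $\Sigma=\Hyp^2$ then $d_\Sigma=d_{\Hyp^3}|_{\Hyp^2}$ and $h(\bar g)=h(g_0)$, which finishes the proof.

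\emph{Main obstacle.} The delicate point is the quantitative estimate in Step 2: controlling how a bulge of $\Sigma$ off $\Hyp^2$ lengthens the closed geodesics whose shadow crosses the bulge, uniformly in $[\g]$, and turning a positive-density family of such lengthenings into a \emph{strict} drop of the critical exponent. This requires both the uniform quasi-isometry control of $p|_\Sigma$ and the equidistribution of closed geodesics of the Fuchsian surface; the rest of the argument is elementary.
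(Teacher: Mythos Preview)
Your Step~1 is correct and coincides with the paper's observation. Step~3 is a clean argument, though note that your Step~2, as written, actually only uses the hypothesis $\Sigma\neq\Hyp^2$ (not the weaker $\ell_{\bar g}\neq\ell_{g_0}$), so if Step~2 worked it would already give the full rigidity and Step~3 would be redundant.

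The real gap is in Step~2, and it is twofold.

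First, the length estimate you claim is about the \emph{projection} of a $\bar g$-geodesic spending time in $V$, whereas equidistribution controls the \emph{$g_0$-geodesic} in the same class. These are different curves: the projection of the $\bar g$-geodesic is only within bounded Hausdorff distance of the $g_0$-geodesic, and nothing prevents it from avoiding $V$ entirely. To make the argument work you must show that a curve whose projection \emph{avoids} a ball it ``should'' cross also pays a definite length penalty. This is exactly the content of the paper's key Lemma (the one you gesture at with ``hyperbolic trigonometry''): the paper introduces an auxiliary weighted metric $d_m$ on $\Hyp^2$, reduces $h(\Sigma)\leq\de((\D,d_m))$ by the Fermi-coordinate computation, and then proves the delicate two-case estimate (the $d_m$-geodesic either crosses the small ball and gains from $\cosh\eta$, or goes around it and gains from hyperbolic convexity).

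Second, ``a definite proportion'' of conjugacy classes being lengthened is not enough to drop the exponential rate: if even a fixed positive fraction of classes satisfy $\ell_{\bar g}(\g)=\ell_{g_0}(\g)$, the counting function for $\ell_{\bar g}$ still grows like $e^T$. You need that the set of geodesics \emph{not} spending the expected time in $V$ is exponentially small; this is a large-deviation statement, and the paper invokes Kifer's theorem to obtain it. Relatedly, you assert that the exponential rate of the $\ell_{\bar g}$-counting function equals $\de(\G,d_\Sigma)$; without a curvature hypothesis on $\bar g$ (which the theorem explicitly does not assume) this identification requires justification. The paper sidesteps this by counting orbit points rather than conjugacy classes, comparing balls for $d_m$ to hyperbolic balls directly.
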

We \emph{did not} renormalize by the volume, this explains the dichotomy between (\ref{eq -BCG}) and (\ref{eq - inverse de BCG sans normalisation}).
\\

We will prove this theorem in the next section. The inequality is trivial since the induce distance between two points is always greater than the distance in $\Hyp^3$ : $d_\Sigma \geq d_{\Hyp^3} $, but the rigidity is not. We have no geometrical (curvature) hypothesis on $\Sigma$, therefore it is not obvious at all to show that the inequality is strict as soon as $\Sigma$ is not totally geodesic. Indeed we cannot use the "usual" techniques of negative curvature like Bowen-Margulis measure, or even the uniqueness of geodesic between two points. 

We obtain an answer to the second question under a geometrical hypothesis on the curvature: 
\begin{theorem}\label{th - main qf}
Let $\G$ be a quasi-Fuchsian group and $\Sigma\subset \Hyp^3$  a $\G-$invariant embedded disk. We suppose that $\Sigma$ endowed with the induced metric has negative curvature. We then have 
$$h(\Sigma)  \leq I(\Sigma,\Hyp^3)\delta(\G, \Hyp^3),$$
where $I(\Sigma,\Hyp^3)$ is the geodesic intersection between $\Sigma$ and $\Hyp^3$. Moreover, equality occurs if and only if the length spectrum of $\Sigma/\G$ is proportional to the one of $\Hyp^3/\G$. 
\end{theorem}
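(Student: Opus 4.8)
\textbf{Proof proposal for Theorem~\ref{th - main qf}.}

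The plan is to compare the two length spectra by passing through dynamical quantities attached to the geodesic flows, since without the cone structure of $\Met(\Sigma)$ one cannot argue by pure volume as in the Fuchsian case. First I would recall that $\Sigma$ with its negatively curved induced metric has a well-defined geodesic flow, a unique measure of maximal entropy (the Bowen-Margulis measure), and a critical exponent equal to $h(\Sigma)$; similarly $\Hyp^3/\G$ has its geodesic flow and $\delta(\G,\Hyp^3)$. The inclusion $\Sigma\hookrightarrow\Hyp^3$ is $\G$-equivariant and the identity on $\G$, so it induces a Hölder orbit equivalence between the two flows once we check that a geodesic of $\widetilde\Sigma$ and the $\Hyp^3$-geodesic with the same endpoints stay at bounded Hausdorff distance: this uses the Morse lemma, valid because $\widetilde\Sigma$ is negatively curved (hence Gromov-hyperbolic, quasi-isometrically embedded) and $\Hyp^3$ is $\mathrm{CAT}(-1)$. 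Concretely this identifies $\partial_\infty\widetilde\Sigma$ with the limit set $\Lambda_\G\subset\partial\Hyp^3$ and lets me define $I(\Sigma,\Hyp^3)$ as the geodesic intersection number, i.e. the ratio $\ell_{\Hyp^3}(c)/\ell_\Sigma(c)$ integrated against the Bowen-Margulis measure of $\Sigma$ (the ``intermediate'' mixed length), exactly the Bonahon-style intersection $i(\cdot,\cdot)$ between the two ``geodesic currents'' defined by the metrics.

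Next I would run the standard entropy-versus-intersection inequality. For a closed geodesic $c$ on $\Sigma/\G$ of $\Sigma$-length $\ell_\Sigma(c)$, the associated loop in $\Hyp^3/\G$ has translation length $\ell_{\Hyp^3}(c)\le \ell_\Sigma(c)$ (indeed $d_\Sigma\ge d_{\Hyp^3}$), and more precisely $\ell_{\Hyp^3}(c)\le \ell_\Sigma(c)$ with the average ratio controlled by $I(\Sigma,\Hyp^3)$. Counting closed geodesics of $\Sigma$-length $\le R$ gives $\sim e^{h(\Sigma)R}$ of them (the prime orbit theorem for negatively curved surfaces), while counting elements of $\G$ whose $\Hyp^3$-translation length is $\le T$ gives growth rate $\delta(\G,\Hyp^3)$. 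Feeding the comparison of lengths through these two counting asymptotics yields $h(\Sigma)\le I(\Sigma,\Hyp^3)\,\delta(\G,\Hyp^3)$; the cleanest way to make the constant come out exactly as the geodesic intersection is to use the variational characterisation $\delta(\G,\Hyp^3)=\sup_\mu h_\mu/\int \ell_{\Hyp^3}\,d\mu$ over flow-invariant probability measures $\mu$ on $T^1\Sigma/\G$, and test it against the Bowen-Margulis measure of $\Sigma$, for which $h_\mu=h(\Sigma)$ and $\int\ell_{\Hyp^3}\,d\mu = I(\Sigma,\Hyp^3)$ by definition of the intersection. This is the same mechanism as in Knieper's and Burger's rigidity arguments and in Croke-Fathi.

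For the equality case I would analyse when the variational inequality is saturated. Equality forces the Bowen-Margulis measure of $\Sigma$ to be the measure of maximal entropy \emph{also} for the reparametrised flow coming from $\ell_{\Hyp^3}$, and by uniqueness of the measure of maximal entropy this happens precisely when the two reparametrisations differ by a constant, i.e. when the Hölder cocycle $\ell_{\Hyp^3}$ is cohomologous to $I(\Sigma,\Hyp^3)\,\ell_\Sigma$ on the geodesic flow. By the Livšic theorem, a Hölder cocycle is a coboundary up to a constant iff it has the prescribed average on every periodic orbit, i.e. iff $\ell_{\Hyp^3}(c)=I(\Sigma,\Hyp^3)\,\ell_\Sigma(c)$ for every closed geodesic $c$ — that is exactly the statement that the marked length spectrum of $\Sigma/\G$ is proportional to that of $\Hyp^3/\G$. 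So equality $\iff$ proportional length spectra, with constant $I(\Sigma,\Hyp^3)$.

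The main obstacle I anticipate is making the orbit equivalence and the intersection number rigorous and genuinely bi-Hölder: one must show that the map sending a $\widetilde\Sigma$-geodesic to the $\Hyp^3$-geodesic with the same endpoints is well-defined, Hölder, and intertwines the two flows up to a Hölder time change, and that the time change is exactly the cocycle whose periodic averages are the ratios $\ell_{\Hyp^3}/\ell_\Sigma$. This is where negative curvature of $\Sigma$ is essential (it fails for a general embedded disk, as in Theorem~\ref{th - main fuchsian}'s setting) because it is what gives Gromov-hyperbolicity, the Morse lemma, a genuine boundary map onto $\Lambda_\G$, and Hölder regularity of that boundary map; without it neither the reparametrisation nor the Livšic step is available. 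A secondary technical point is verifying that $\int \ell_{\Hyp^3}\,d\mu_{\mathrm{BM}}$ is finite and positive and coincides with Bonahon's geodesic intersection — this follows from compactness of $\Sigma/\G$ and the bound $0<c\le \ell_{\Hyp^3}/\ell_\Sigma\le 1$ on periodic orbits, which extends by density to the whole flow.
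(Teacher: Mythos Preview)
Your argument is essentially correct but follows a genuinely different route from the paper. The paper proves the inequality by a direct Patterson--Sullivan/volume argument in the style of Knieper: it uses Kingman's theorem to get $a(v,t)/t\to I(\Sigma,\Hyp^3)$ for $\mu_{BM}$-a.e.\ $v$, covers a piece of the $\Sigma$-sphere of radius $t$ by balls, applies the shadow lemma for $\mu_p^\Sigma$ to count the balls, and then bounds their total $\Hyp^3$-volume inside a thickened convex core ball of radius $t(I+\epsilon)$; taking logs and $t\to\infty$ gives $h(\Sigma)\le I\,\delta(\G)$. For the equality case the paper shows that saturation forces the two Patterson--Sullivan families $\mu_p^\Sigma$ and $\mu_p^{\Hyp^3}$ to be equivalent, then passes through H\"older equivalence of the Gromov visual metrics and a cross-ratio computation to obtain proportionality of the marked length spectra. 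Your approach replaces all of this by thermodynamic formalism: Abramov's formula plus the variational principle give $\delta(\G,\Hyp^3)=\sup_\mu h_\mu(\phi^\Sigma)/\!\int f\,d\mu$, testing at $\mu_{BM}^\Sigma$ yields the inequality, and equality is analysed via uniqueness of equilibrium states and Liv\v{s}ic. This is cleaner and more conceptual; the paper's proof is more hands-on and closer to the measure-theoretic geometry, and in particular does not need to verify that the orbit equivalence is bi-H\"older or that the time-change is H\"older.

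Two small points to tighten. First, the paper defines $I(\Sigma,\Hyp^3)$ as the a.e.\ limit of the \emph{subadditive} cocycle $a(v,t)=d_{\Hyp^3}(\pi\phi_t^\Sigma v,\pi v)$, not as $\int f\,d\mu_{BM}$ for the additive time-change $f$; you should remark that the two agree because the Morse lemma gives $|a(v,t)-\int_0^t f(\phi_s^\Sigma v)\,ds|\le C$ uniformly. Second, in your equality analysis the step ``uniqueness of the MME implies the reparametrisations differ by a constant'' needs one more sentence: what you actually use is that $\mu_{BM}^\Sigma$ is simultaneously the equilibrium state of the constant $-h(\Sigma)$ and of $-\delta(\G)f$ (both with pressure zero), so by uniqueness of equilibrium states for H\"older potentials these two potentials are cohomologous, and \emph{then} Liv\v{s}ic gives $\ell_{\Hyp^3}(\g)=I\,\ell_\Sigma(\g)$ on periodic orbits.
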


The \emph{geodesic intersection} will be defined in section \ref{sec - geodesic intersection}. Roughly, it is the average ratio of the length between two points of $\Sigma$ for the extrinsic and intrinsic distance. We need the curvature assumption to define and use this invariant.

This Theorem implies Theorem \ref{th - main fuchsian} only for negatively curved embedded disks but not in its full generality. Indeed, when $\G$ is Fuchsian, and $\Sigma/\G$ has the same length spectrum as $Hyp^3/\G$ it follows directly by the work of J-P Otal \cite{otal1990spectre} that $\Sigma =\Hyp^2/\G$. However, using the fact that $\Sigma$ is embedded in $\Hyp^3$ we will be able to prove without the Fuchsian hypothesis that if the two length spectrum are equal then $\Sigma$ is totally geodesic, and therefore we obtain the following corollary of Theorem \ref{th - main qf}:
\begin{Corollaire}\label{cor - h(sigma)=delta imply fuchsian}
Under the assumptions  of Theorem \ref{th - main qf} we have 
$$h(\Sigma) \leq \delta(\G,\Hyp^3),$$ 
with equality if and only if  $\G$ is fuchsian and $\Sigma$ is the totally geodesic hyperbolic plane, preserved by $\G$. 
\end{Corollaire}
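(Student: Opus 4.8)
The plan is to derive the Corollary directly from Theorem \ref{th - main qf} by showing that the geodesic intersection satisfies $I(\Sigma,\Hyp^3)\leq 1$, with equality only in the rigid case. Recall that the geodesic intersection $I(\Sigma,\Hyp^3)$ measures the asymptotic average ratio $d_\Sigma/d_{\Hyp^3}$ between pairs of points (this is made precise in Section \ref{sec - geodesic intersection}). Since $\Sigma\subset\Hyp^3$ is embedded, any path in $\Sigma$ between two points $x,y$ is also a path in $\Hyp^3$, hence $d_\Sigma(x,y)\geq d_{\Hyp^3}(x,y)$ for all $x,y$. Integrating this pointwise inequality against whatever measure (Bowen--Margulis / Patterson--Sullivan type, or the measure used in the definition) defines $I$, we get $I(\Sigma,\Hyp^3)\geq 1$. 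The subtlety is the \emph{direction}: the invariant in Theorem \ref{th - main qf} is arranged so that $h(\Sigma)\leq I(\Sigma,\Hyp^3)\,\delta(\G,\Hyp^3)$ is an improvement over $h(\Sigma)\leq\delta(\G,\Hyp^3)$ only when $I\leq 1$, so I expect the normalization of $I$ to be such that $I(\Sigma,\Hyp^3)$ is in fact the average of $d_{\Hyp^3}/d_\Sigma$ along orbit points, giving $I(\Sigma,\Hyp^3)\leq 1$. I would state this as a short lemma: \emph{for an embedded $\Sigma$ one has $I(\Sigma,\Hyp^3)\leq 1$}, and then the inequality $h(\Sigma)\leq\delta(\G,\Hyp^3)$ is immediate.

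The main work is the equality case. Suppose $h(\Sigma)=\delta(\G,\Hyp^3)$. Combined with $h(\Sigma)\leq I(\Sigma,\Hyp^3)\,\delta(\G,\Hyp^3)\leq\delta(\G,\Hyp^3)$, this forces both $I(\Sigma,\Hyp^3)=1$ and the equality case of Theorem \ref{th - main qf}, namely that the length spectrum of $\Sigma/\G$ is proportional to that of $\Hyp^3/\G$. First I would exploit $I(\Sigma,\Hyp^3)=1$: since $I$ is an average of a quantity $\leq 1$ (resp.\ $\geq 1$) that equals $1$ only when the extrinsic and intrinsic distances agree, the equality $I=1$ should propagate — via ergodicity of the geodesic flow on the relevant support — to the statement that $d_\Sigma(x,y)=d_{\Hyp^3}(x,y)$ for $x,y$ in a dense set, and hence everywhere by continuity. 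This means the inclusion $\Sigma\hookrightarrow\Hyp^3$ is distance-preserving, i.e.\ a totally geodesic isometric embedding: a $\G$-invariant totally geodesic copy of $\Hyp^2$ inside $\Hyp^3$.

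It remains to deduce that $\G$ is Fuchsian. If $\Sigma$ is a totally geodesic hyperbolic plane in $\Hyp^3$ preserved by $\G$, then $\G$ stabilizes the boundary circle $\partial\Sigma\subset\partial\Hyp^3$, so after conjugation $\G$ lies in $\Isom(\Hyp^2)=i(\PSL_2(\R))$ (up to index two, which a genus $\geq 2$ surface group argument or orientation considerations remove); since $\G$ is discrete, faithful and a surface group, this is precisely a Fuchsian representation, and $\Sigma$ is the invariant totally geodesic plane as claimed. Conversely, if $\G$ is Fuchsian and $\Sigma$ is its invariant totally geodesic plane, then $d_\Sigma=d_{\Hyp^3}$ on orbit points, so $h(\Sigma)=\delta(\G,\Hyp^2)=\delta(\G,\Hyp^3)$ and equality holds.

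I expect the main obstacle to be making the implication ``$I(\Sigma,\Hyp^3)=1\Rightarrow\Sigma$ totally geodesic'' fully rigorous: one must know the precise measure-theoretic definition of $I$ from Section \ref{sec - geodesic intersection} and invoke an ergodicity/full-support argument to upgrade ``average equality'' to ``pointwise equality,'' and then a (standard but not entirely trivial) rigidity step to conclude that a $\G$-invariant disk realizing $d_\Sigma=d_{\Hyp^3}$ must be the totally geodesic plane rather than merely isometric to $\Hyp^2$ abstractly. Once that is in hand, identifying the representation as Fuchsian via the stabilized boundary circle is routine. Alternatively — and perhaps more cleanly — one can bypass $I$ in the equality discussion and argue directly from the proportionality of length spectra together with the entropy equality $h(\Sigma)=\delta(\G,\Hyp^3)$: proportionality with equal entropies forces the proportionality constant to be $1$, hence the marked length spectra of $\Sigma/\G$ and $\Hyp^3/\G$ coincide, and then the same boundary-circle/total-geodesy argument applies. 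I would present the proof along these lines, citing Theorem \ref{th - main qf} for the hard analytic input and keeping the corollary's proof to the short geometric deductions above.
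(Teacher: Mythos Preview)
The paper does not give a standalone proof of the Corollary; it is stated in the introduction as an immediate consequence of Theorem~\ref{th - main qf}, and the implied argument is exactly the one you outline. Your hesitation about the direction of $I$ is unnecessary once you look at the actual definition in Section~\ref{sec - geodesic intersection}: the cocycle is $a(v,t)=d_{\Hyp^3}(\pi\phi_t^{\Sigma}v,\pi v)$, and since the $\Sigma$--geodesic from $\pi v$ to $\pi\phi_t^{\Sigma}v$ has $\Hyp^3$--length $t$, one has $a(v,t)\le t$ and hence $I(\Sigma,\Hyp^3)\le 1$. The inequality $h(\Sigma)\le\delta(\G,\Hyp^3)$ then follows at once from Theorem~\ref{th - main qf}.

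For the equality case, your second (``alternative'') route is the clean one and should be the main argument, not a fallback. Equality $h(\Sigma)=\delta(\G,\Hyp^3)$ forces $I=1$ and, by the equality clause of Theorem~\ref{th - main qf}, $\ell_\Sigma(\g)=I^{-1}\ell_{\Hyp^3}(\g)=\ell_{\Hyp^3}(\g)$ for every $\g$. Since the closed $\Sigma$--geodesic in the class $[\g]$ is a curve in $\Hyp^3/\G$ of length $\ell_\Sigma(\g)$, equality with the translation length means it \emph{is} the $\Hyp^3$--geodesic; density of periodic vectors in $T^1(\Sigma/\G)$ (negative curvature) then gives that every $\Sigma$--geodesic is an $\Hyp^3$--geodesic, so $\Sigma$ is totally geodesic and $\G$ preserves a copy of $\Hyp^2$, i.e.\ is Fuchsian. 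Your first route (upgrading $I=1$ to $d_\Sigma=d_{\Hyp^3}$ pointwise via ergodicity) is shakier because $I=1$ is only an \emph{asymptotic} equality $a(v,t)/t\to 1$, not $a(v,t)=t$; you would need an extra quasi-additivity argument to get anything pointwise, whereas the length-spectrum route avoids this entirely.
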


The proof of this corollary raises the  following question generalizing this result: if a quasi-Fuchsian manifold has the same length spectrum as a negatively curved surface, does it implie that it is in fact Fuchsian ? We answer this question using a well known result pf Y. Benoist, showing the following theorem

\begin{theorem}\label{th - spectre proportionel et constant curvature implique fuchsian}
Let $M$ be a quasi-Fuchsian manifold and $\Sigma$ a hyperbolic (in the sense that it has constant curvature equal $-1$) surface. Suppose that $M$ and $\Sigma$ have proportional length spectrum (ie. there exists $k\in \R^+$ such that for all $\g \in \G, \, \ell_M(\g)=k \ell_\Sigma(\g)$), then $M$ is Fuchsian and $\Sigma$ is isometric to the totally geodesic surface in $M$. 
\end{theorem}

Theorem \ref{th - main qf} has to be compared to results obtained by G. Knieper who compared entropy for two different metrics on the same manifolds and our proof of Theorem \ref{th - main qf} follows his paper \cite{knieper1995volume}. 
As in his paper, we obtain that the intersection is larger than $1$ as soon as $\G$ is not Fuchsian. 

It is also related to the work of M. Bridgeman and E. Taylor \cite{bridgeman2000length}, indeed we answer by the negative to Question 2 of their paper.  And finally, we can see our work as an extension of U. Hamenstadt's paper \cite{hamenstadt2002ergodic}, where she compared  the geodesic intersection between the boundary of convex hull and $\Hyp^3$ for quasi-Fuchsian manifolds.

As we said, the two proofs are very different one from each others. For the Fuchsian case, we give precise estimates for the length of some paths of the hyperbolic plane. We show that in some sense the length between two points on $\Sigma$ is much greater than the extrinsic distance between those two points. For quasi-Fuchsian manifolds, we use well known techniques of negative curvature geometry: we compare the Patterson Sullivan measures for $\Hyp^3$ and for $\Sigma$. 

\paragraph{Acknowledgements} We want to thank Maxime Wolff for his help in the proof of Theorem \ref{th - main fuchsian}, and the referee for usefull comments concerning rigidity questions. 

\section{Fuchsian case}
In this section we are going to prove Theorem \ref{th - main fuchsian}. This theorem has a strong condition on $\G$, ie. it is conjugate to a subgroup of $\PSL_2(\R)$ but we make no  geometrical  assumptions on $\Sigma$. As we said, there could be more than one geodesic between two points on $\Sigma$. 

We already remarked that the inequality is trivial, as is the equality when $\Sigma$ is totally geodesic. Therefore, the only thing left to prove is the strict inequality when $\Sigma$ is not totally geodesic or in other words if $\Sigma\neq \Hyp^2$ then $h(\Sigma) < 1$. 

The proof of the theorem is based on the comparison between the distances on equidistant surfaces of  the totally geodesic  $\G$-invariant hyperbolic plane. We are going to prove several lemmas which together gives Theorem \ref{th - main fuchsian}. The strict inequality follows directly from Lemmas `\ref{lem - h(sigma)<delta(D,dm)} and \ref{lem- delta dm < 1-delta}. We denote by $\D$ the totally geodesic, $\G$-invariant plane. The induced metric on  $\D$ is the usual hyperbolic metric, and we will denote it by $\Hyp^2$. We are first going to see that between all the equidistant surfaces, $\Hyp^2$ has the biggest entropy. Then we will make this argument work when only one part of the surface is "above" $\D$. The idea to prove it, is to consider another distance $d_m$ on $\D$, which will be used as an intermediary between $\Sigma$ and $\Hyp^2$. We will explain, after the definition of $d_m$ how the two comparisons will be proved. 

Let us begin to parametrize $\Hyp^3 $ by $\Hyp^2\times \R $ as follows: take an orientation for the unit normal tangent space of $\Hyp^2$, then to a point $x\in \Hyp^3$ we associate $s(x)$ the orthogonal projection from $\Hyp^3$ to $\Hyp^2$. It is the first parameter of the parametrization. The oriented distance along this geodesic gives the second one.  Hence the parametrisation, called Fermi coordinates, is defined by 
$$\begin{array}{ccc}
\Hyp^3 &\mapsto &  \Hyp^2\times \R	\\
z & \tv &  (s(z), \hat{d}(z,s(z)) )
\end{array}$$
 where $\hat{d}$ is the oriented distance defined by the choice of the orientation on the unit normal tangent of $\Hyp^2$.  With this parametrization, the metric on $\Hyp^3$ is 
$$g_{\Hyp^3} = \cosh^2(r)  g_0 + dr^2.$$

Look at $S(r)$ the equidistant disk  at distance $r$ of $\Hyp^2$, its metric, induces by the one on $\Hyp^3$, is $g_r =\cosh^2(r) g_0$. It is  isometric to a hyperbolic plane of curvature $\frac{1}{\cosh(r)}$, and its volume entropy is $h(S(r)) = \frac{h(0)}{\cosh(r)}= \frac{1}{\cosh(r)}$, hence the entropy is maximal if and only if $r=0$. 
For the general case, we are going to refined this argument showing that it is sufficient that a small part of $\Sigma$ is over $\Hyp^2$ for the entropy to be strictly less than $1$. 

Let $\Sigma$ be a embedded $\G$-invariant disk in $\Hyp^3$. We assume that $\Sigma \neq \D$, and we endowed $\Sigma$ with its induced metric. Let $x,y$ be two points on $\Sigma$. Let $c_\Sigma$ be a geodesic on $\Sigma$ linking $x$ to $y$. We parametrize $c_\Sigma$ by its Fermi coordinates, $(c,r)$. We then have 
\begin{eqnarray}\nonumber
d_{{\Sigma}} (x,y) &=& \int_0^L \| c'_{{\Sigma} }(t) \|_{\Sigma} dt \\ \nonumber
										 &=& \int_0^L \sqrt{r'(t)^2 +\cosh^2(r(t)) \| c' (t) \|_{g_0}^2 }dt. \\ \label{eq- evaluation distance}
										 &\geq & 	\int_0^L \cosh(r(t)) 		 \| c' (t) \|_{g_0} dt.
\end{eqnarray}	
We now endowed $\D$ with another distance than the one coming from hyperbolic metric. It will play the role of intermediary to compare  $d_{{\Sigma}} ( x,y) $ on $\Sigma$ with $d_{g_0}(s (x),s(y))$ on $\Hyp^2$. 

We call $\sigma$ the restriction of $s$ on $\Sigma$ . Since $\Sigma\neq \D$, there exists $x_0 \in \D\setminus \Sigma$, $\varepsilon >0$ and $\eta>0$ such that
$$d_{\Hyp^3} (\sigma^{-1} B(x_0, 2\varepsilon), \D) >\eta .$$ 
This means that \emph{all} the points in the pre-image of $B(x_0,2\varepsilon)$ by $\sigma$ are at distance greater than $\eta$ from  $\D$. 
We will assume that $2\varepsilon$ is smaller than the injectivity radius of $\Hyp^2/\G$ in order that the translations of $B(x_0,2\varepsilon)$ by $\G$ are disjoint. We have taken $2\epsilon$ in order to simplify the proof of Lemma \ref{lem -clef}. 

We now consider on  $\D$ the metric $g_m$ defined by putting weight on the translations of $B(x_0,2\varepsilon)$ by $\G$. 
\begin{definition}
We define $g_m$ by
$$g_m := \cosh(\eta)^2 g_0,$$
on $\G\cdot B(x_0,2\varepsilon)$.
and 
$$g_m := g_0,$$
elsewhere.
\end{definition}
We will index by  $m$ objects which depends on this metric. Remark that this metric is not continuous but it still defines a length space. Let $c : [0,1]\tv \D$ be a $C^1$ path  we then have $$\ell_m(c) = \int_0^1 \| \dot{c}(t)\|_{g_m} dt.$$
This gives a distance  $d_m$ on $\D$ by choosing :
 $$d_m(x,y) := \inf_{c} \{ \ell_m(c) \, | \, c(0)=x, \, c(1) =y\}.$$

In order to prove Theorem \ref{th - main fuchsian} we will compare the entropy of $(\D,d_m)$ with the one of $\Sigma$ and the one of $\Hyp^2$. The comparison with the entropy of $\Sigma$ is quite easy and follows quickly from the definition of $d_m$ and the inequality (\ref{eq- evaluation distance}). The comparison with the entropy of $\Hyp^2$ is more subtle. Indeed, there exist geodesics of $\Hyp^2$ which are geodesics for $(\D,d_m)$ (any lift of a closed geodesic which does not cross the ball $B(x_0,2\varepsilon)/\G$) on $\Hyp^2/\G$). We will first prove that two points of $\D$ which are joined by a geodesic of $\Hyp^2$ which crosses often $\G\cdot B(x_0,2\epsilon)$ are much farther  away from each other for $d_m$ distance, cf Lemme \ref{lem -clef}. Then, we will use a large deviation theorem for the geodesic flow (Theorem \ref{th large deviation sur les vecteurs}), to show that there are few geodesics which do not cross $\G\cdot B(x_0,2\epsilon)$ (Lemme \ref{lem il existe o tel que theta(Eo (R) est petit}). It will follow from these two results that the balls  of radius $R$ for $d_m$ are almost completely included in balls of radius $R/C$ of $\Hyp^2$ for $C>1$ (Lemma \ref{lem- delta dm < 1-delta}). The two comparisons give the proof of Theorem \ref{th - main fuchsian}. 

The comparison between $h(\Sigma)$ and the critical exponent of $(\D,d_m)$ follows from the inequality \ref{eq- evaluation distance} and the definition of $d_m$.

\begin{lemme}\label{lem - h(sigma)<delta(D,dm)}
We have $$h(\Sigma) \leq \delta((\D,d_m)).$$
\end{lemme}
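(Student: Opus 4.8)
The plan is to produce, for every pair of points $x,y$ on $\Sigma$, a comparison of the form $d_\Sigma(x,y)\ge d_m(\sigma(x),\sigma(y))$, where $\sigma=s|_\Sigma$ is the Fermi projection to $\D$. Granting such an inequality, the critical exponent computation is immediate: fix a basepoint $p\in\Sigma$ with $o:=\sigma(p)\in\D$. For any $\g\in\G$, the projection $s$ is $\G$-equivariant (the parametrization of $\Hyp^3$ by $\Hyp^2\times\R$ is $\G$-equivariant since $\G$ preserves $\D$), so $\sigma(\g\cdot p)=\g\cdot o$. Hence $d_m(\g\cdot o,o)\le d_\Sigma(\g\cdot p,p)$, so $\{\g : d_\Sigma(\g\cdot p,p)\le R\}\subseteq\{\g : d_m(\g\cdot o,o)\le R\}$, and taking cardinalities and $\limsup\frac1R$ gives $h(\Sigma)=\delta(\G,\Sigma)\le\delta((\D,d_m))$, using the standard identification of volume entropy with the critical exponent of the deck action.

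So the real content is the distance inequality. First I would take a geodesic $c_\Sigma:[0,L]\to\Sigma$ from $x$ to $y$, realizing $d_\Sigma(x,y)=L$, and write it in Fermi coordinates as $(c,r)$ with $c:[0,L]\to\D$ and $r:[0,L]\to\R$. Inequality (\ref{eq- evaluation distance}) already gives
$$d_\Sigma(x,y)\ \ge\ \int_0^L\cosh(r(t))\,\|c'(t)\|_{g_0}\,dt.$$
Now I claim the right-hand side is at least $\ell_m(c)=\int_0^L\|c'(t)\|_{g_m}\,dt$, hence at least $d_m(\sigma(x),\sigma(y))$ since $c$ joins $c(0)=s(x)=\sigma(x)$ to $c(L)=s(y)=\sigma(y)$. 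To see the claim, compare the integrands pointwise. Off $\G\cdot B(x_0,2\varepsilon)$ we have $g_m=g_0$, so $\cosh(r(t))\|c'(t)\|_{g_0}\ge\|c'(t)\|_{g_0}=\|c'(t)\|_{g_m}$ because $\cosh\ge 1$. On $\G\cdot B(x_0,2\varepsilon)$, the point $c_\Sigma(t)$ lies in $\sigma^{-1}(\G\cdot B(x_0,2\varepsilon))$, which by construction sits at Fermi-distance $>\eta$ from $\D$; hence $|r(t)|>\eta$ and $\cosh(r(t))>\cosh(\eta)$, so $\cosh(r(t))\|c'(t)\|_{g_0}\ge\cosh(\eta)\|c'(t)\|_{g_0}=\|c'(t)\|_{g_m}$. (The discontinuity of $g_m$ on the boundary spheres contributes a measure-zero set of $t$ and is harmless; one may also note $g_m$ still defines a genuine length structure, as remarked after its definition.) Integrating the pointwise bound gives $d_\Sigma(x,y)\ge\ell_m(c)\ge d_m(\sigma(x),\sigma(y))$, as desired.

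The only mildly delicate point — and the one I would state carefully rather than the trivial $\limsup$ bookkeeping — is the pre-image condition: we need that whenever $c_\Sigma(t)$ projects into a translate of $B(x_0,2\varepsilon)$, its Fermi height exceeds $\eta$. This is precisely the defining property $d_{\Hyp^3}(\sigma^{-1}B(x_0,2\varepsilon),\D)>\eta$ together with $\G$-equivariance of $s$, which transports the estimate to every translate $\g\cdot B(x_0,2\varepsilon)$; the disjointness of the translates (guaranteed by $2\varepsilon$ being below the injectivity radius of $\Hyp^2/\G$) is not even needed for this lemma, only for later ones. I do not expect any genuine obstacle here; the inequality is, as the text says, a direct consequence of (\ref{eq- evaluation distance}) and the definition of $g_m$.
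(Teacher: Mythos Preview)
Your proposal is correct and follows essentially the same approach as the paper's own proof: reduce to the pointwise inequality $d_\Sigma(x,y)\ge d_m(\sigma(x),\sigma(y))$ via the Fermi-coordinate estimate (\ref{eq- evaluation distance}), then compare the integrands case by case according to whether $c(t)$ lies in $\G\cdot B(x_0,2\varepsilon)$. Your write-up is in fact slightly more careful than the paper's on two points---you use $|r(t)|>\eta$ rather than $r(t)>\eta$ (correct, since the Fermi height is signed), and you make explicit the $\G$-equivariance of $s$ needed to transport the height bound to every translate---but these are refinements of the same argument, not a different route.
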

\begin{proof}
Let $x\in \Sigma$ and $o=\sigma(x) \in \D$. Since $\Sigma/\G$  is compact, we have
$$h(\Sigma) = \lim_{R\tv \infty} \frac{1}{R}\log \Card\{ \g \in \G \, | \, d_\Sigma(\g x ,x )\leq R  \}. $$
And by definition 
$$\delta((\D,d_m)) = \lim_{R\tv \infty} \frac{1}{R}\log \Card\{ \g \in \G \, | \, d_m(\g o ,o )\leq R \} . $$
It is sufficient to prove that $d_\Sigma (x,y) \geq d_m(s(x),s(y))$, for all $x,y\in \Sigma$. Let $c_\Sigma=(c,r)$ be a geodesic on $\Sigma$ joining $x$ to $y$.  Recall that we have 
\begin{eqnarray*}
d_{{\Sigma}} (x,y)  &\geq & 	\int_0^L \cosh(r(t)) 		 \| c' (t) \|_{g_0} dt.
\end{eqnarray*}	
If $c(t)\notin \G \cdot B(x_0,2 \epsilon)$, then $\| c' (t) \|_{g_m}= \| c' (t) \|_{g_0}$. In particular  
$$\| c' (t) \|_{g_m} \leq \cosh(r(t)) 		 \| c' (t) \|_{g_0}.$$
If $c(t)\in \G \cdot B(x_0,2 \epsilon)$, then by definition of $g_m$, $\| c' (t) \|_{g_m}= \cosh(\eta)\| c' (t) \|_{g_0}$ and since $\Sigma$ is "far" from  $\D$, $r(t)>\eta$. In particular, 
$$\| c' (t) \|_{g_m} \leq \cosh(r(t)) 		 \| c' (t) \|_{g_0}.$$
Finally
\begin{eqnarray*}
d_{{\Sigma}} (x,y)  &\geq & 	\int_0^L \| c' (t) \|_{g_m} dt\\
							&\geq & l_m(c)\\
							&\geq & d_m(s(x),s(y)).
\end{eqnarray*}	

\end{proof}

Our next aim is to compare the distance $d_m$ and $d_{\Hyp^2}$.  Let us fix some notations before stating the first lemma. For all  $v\in T^1 \Hyp^2$,  let $\zeta^v_R $ be the probability measure on $T^1\Hyp^2$, defined for all borelian $E\subset T^1\Hyp^2$ by:
$$\zeta^v_R (E) =\frac{1}{R	}\int_0^R \chi _E \left( \phi_t^{\Hyp^2}(v) \right) dt$$
where $\chi_E $ is the indicator function of  $E$.  For a borelian $E$ which is a unitary tangent bundle of a subset of $\D$, $E:= T^1A$, we have:
$$\zeta^v_R(E) = \frac{1}{R} Leb \{ t\in [0,R] | c_v(t) \in A\}$$
since $ \phi_t^{\Hyp^2}(v)  \in E$ is equivalent to  $c_v(t) = \pi \phi_t^{\Hyp^2}(v) \in A$.

Let $L$ be the Liouville measure on the unitary tangent bundle of the quotient surface $T^1\Hyp^2/\G$. Recall that the metric $g_m$ is given by $g_m=\cosh^2(\eta)g_0$ on $T^1\G B(x_0,2\varepsilon)$. We fix the following $K := T^1( \G \cdot B(x_0,\varepsilon))$. \footnote{We use a ball of half the size, for a technical reason that appears at the beginning of the proof of Lemma \ref{lem -clef}}
\begin{definition}
Let  $\kappa>0$  be such $L(K/\G) -2\kappa>0 $.
We define the following sets,
$$\mathcal{E} (R) :=  \{ v \in T^1 \Hyp^2 \, |\,  \zeta^v_R (K) > L(K/\G) - \kappa \},$$
and for all points  $o\in \Hyp^2$, we note
$$\mathcal{E}_o (R) :=  \{ v \in T_o^1 \Hyp^2\,  |\,  \zeta^v_R (K) > L(K/\G) - \kappa \}.$$
\end{definition}
A geodesic of length $R$ whose direction is given by a vector $v\in \mathcal{E}(R)$ crosses $\pi K$ "often", that is at least a number proportional to $R$, cf. Figure \ref{fig K, Eo E}. Indeed, if $v\in \mathcal{E}(R)$ we have 
$$\frac{1}{R} \Leb \{ t \in [0,R] | c_0(t) \cap \pi K \neq \emptyset \} > L(K/\G)-\kappa >\kappa>0,$$
since  $\dot{c}_0(t) \in K$ is equivalent to   $c_0(t) \in \pi K$ by definition of $K$. 

\begin{figure}[h]
\begin{center}   \includegraphics[scale=0.4]{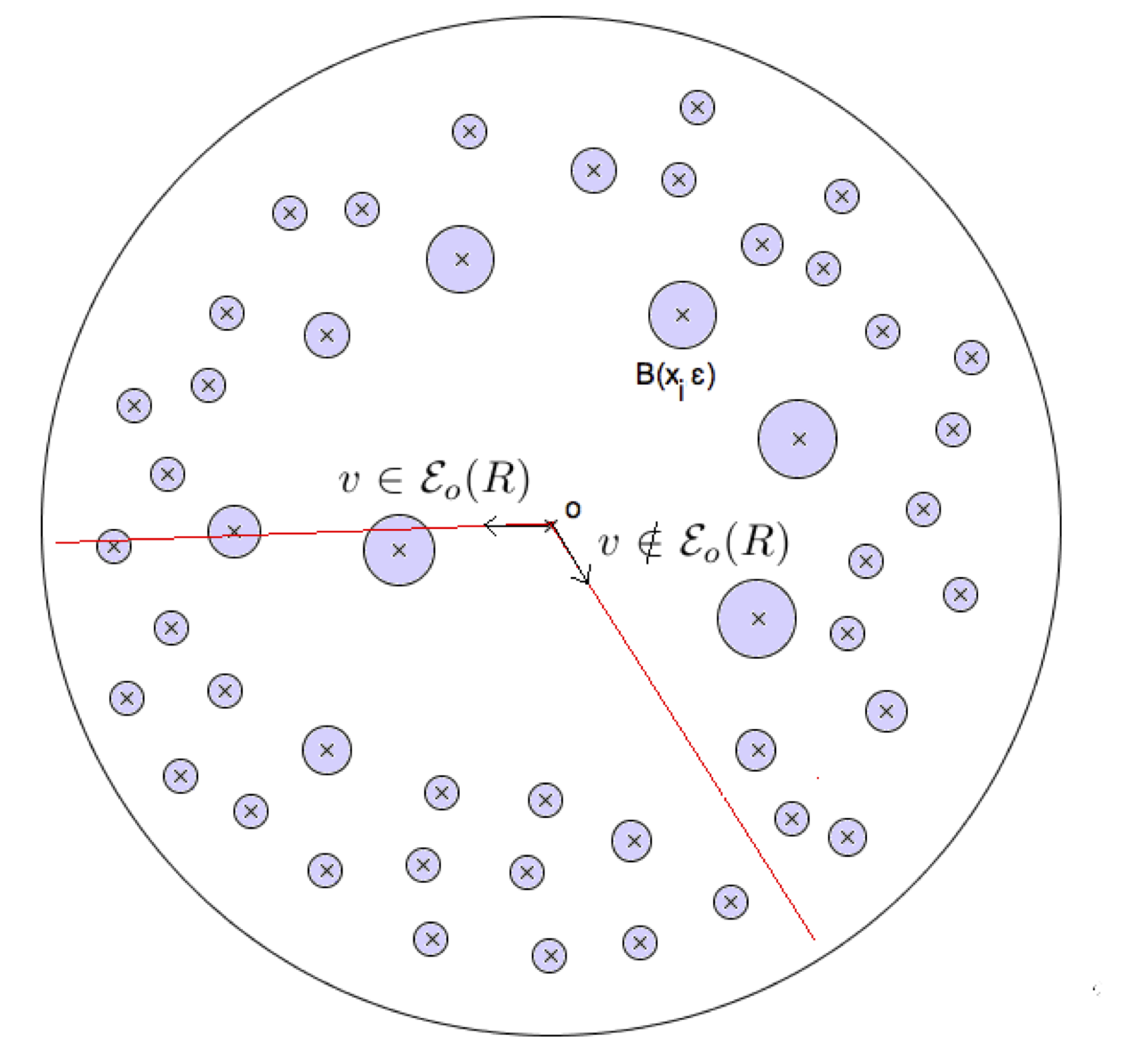}
    \caption{\label{fig K, Eo E} $\G\cdot B(x_0,\varepsilon)$, $\mathcal{E}_o(R)$ et $\mathcal{E}^c_o(R)$.}
\end{center}
\end{figure}

The next argument is the key in the proof of Theorem \ref{th - main fuchsian}. It shows that we can compare the length of a geodesic  in $\Hyp^2$ which crosses often $\pi K$ with its $d_m$ length. 

\begin{lemme}\label{lem -clef}
There exists $C>1$, such that for all $R>0$, for all $v\in \mathcal{E}_o (R) $ and for all  $x \in \{ \exp(tv) \, | \, t\in [R,2R]\}$, we have :
\begin{eqnarray}\label{eq -lem clef}
d_m(o,x) \geq  Cd_{\Hyp^2}(o,x).
\end{eqnarray} 
\end{lemme}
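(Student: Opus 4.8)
The plan is to use the following dichotomy for a geodesic segment of $\Hyp^2$: either it travels through a translate of $B(x_0,2\varepsilon)$ for a definite amount of time, in which case the extra weight $\cosh(\eta)$ on that portion makes the $d_m$-length strictly larger, or it only dips briefly into the weighted region. Fix $v\in\mathcal{E}_o(R)$ and $x=\exp(Tv)$ for some $T\in[R,2R]$. By definition of $\mathcal{E}_o(R)$ and of $K=T^1(\G\cdot B(x_0,\varepsilon))$, the geodesic $c_0$ from $o$ to $x$ spends a proportion at least $L(K/\G)-\kappa$ (of its length, up to the small discrepancy coming from the interval $[R,T]$ versus $[0,R]$, which is why one wants $R$ large; one absorbs this by noting that on $[R,2R]$ we still have at least a fixed positive fraction $\kappa/2$ say of the time inside $\pi K$) inside $\pi K$, hence inside the \emph{smaller} balls $\G\cdot B(x_0,\varepsilon)$.

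First I would estimate $d_m(o,x)$ from below. Let $c$ be any $C^1$ path from $o$ to $x$; I want $\ell_m(c)\geq C d_{\Hyp^2}(o,x)$. If $c$ never enters $\G\cdot B(x_0,2\varepsilon)$ this is false in general, so the real content is that the competitor $d_m$-geodesic $c$ \emph{must} cross many of the balls, because it connects the same endpoints as $c_0$, which crosses $\approx \kappa' R$ of them. Here is the mechanism: the translates $\G\cdot B(x_0,2\varepsilon)$ are disjoint (injectivity radius hypothesis), and the geodesic $c_0$ passes through the cores $\G\cdot B(x_0,\varepsilon)$ during a set of times of measure $\geq \kappa' R$. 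Each such visit means $c_0$ crosses an annular region $B(x_0,2\varepsilon)\setminus B(x_0,\varepsilon)$ of definite width $\varepsilon$ going in and coming out; any other path $c$ with the same endpoints, by a homotopy/separation argument, must either also cross a definite number $N\geq c'' R$ of these annuli, picking up length $\geq N\cdot 2\varepsilon\cdot\cosh(\eta)$ from the weighted metric, or else it can avoid them — but to avoid an annulus while $c_0$ goes through the core, $c$ must make a detour, and summing the detours gives extra Euclidean (hyperbolic) length. The cleanest way to package this: show $d_m(o,x)\geq d_{\Hyp^2}(o,x) + (\cosh(\eta)-1)\cdot(\text{time $c$ spends in }\G\cdot B(x_0,2\varepsilon))$ is not quite enough by itself; instead one argues that \emph{either} $c$ spends time $\geq \beta d_{\Hyp^2}(o,x)$ inside the weighted region, giving $\ell_m(c)\geq d_{\Hyp^2}(o,x) + (\cosh\eta-1)\beta d_{\Hyp^2}(o,x)$, \emph{or} $c$ stays mostly outside, in which case — since $c_0$ stays inside the cores for time $\geq\kappa' R\geq\kappa' d_{\Hyp^2}(o,x)/2$ — the path $c$ must "go around" a definite fraction of the balls visited by $c_0$, and each avoided ball of radius $\varepsilon$ around a point at which $c_0$ penetrates depth $\varepsilon$ forces length $\gtrsim\varepsilon$ of detour. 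Both cases yield $\ell_m(c)\geq (1+\alpha)d_{\Hyp^2}(o,x)$ for a uniform $\alpha>0$, and taking the infimum over $c$ gives the lemma with $C=1+\alpha$.

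The steps in order: (1) Translate $v\in\mathcal{E}_o(R)$ into the quantitative statement that $c_0|_{[0,T]}$ spends time $\geq \kappa' T\geq \kappa' R$ inside $\G\cdot B(x_0,\varepsilon)$, using $T\in[R,2R]$ and $d_{\Hyp^2}(o,x)=T$; (2) enumerate the disjoint balls $B_1,\dots,B_k$ ($k\gtrsim R$) that $c_0$ enters to depth $\geq\varepsilon$; (3) for an arbitrary competitor path $c$ from $o$ to $x$, split the index set $\{1,\dots,k\}=A\sqcup A^c$ where $A$ is the set of $i$ such that $c$ enters $B_i$ (the outer ball of radius $2\varepsilon$); (4) for $i\in A$, $c$ crosses the annulus twice, contributing $\geq 2\varepsilon(\cosh\eta-1)$ of \emph{excess} $g_m$-length over its $g_0$-length; for $i\in A^c$, compare $c$ to a path that does hit $B_i$ where $c_0$ does and deduce a detour lower bound of order $\varepsilon$ per ball using the triangle inequality and the fact that the $B_i$ are $\varepsilon$-separated along $c_0$; (5) sum: $\ell_m(c)\geq \ell_{g_0}(c) + \min(2(\cosh\eta-1),c_3)\,\varepsilon\cdot k \geq d_{\Hyp^2}(o,x) + c_4\,\varepsilon\,\kappa' R$, and since $R\geq d_{\Hyp^2}(o,x)/2$ this is $\geq(1+\alpha)d_{\Hyp^2}(o,x)$; (6) infimum over $c$ finishes it, with $C=1+\alpha$ independent of $R,v,x$.

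The main obstacle is step (4)–(5), specifically the case $i\in A^c$: ruling out the possibility that a clever competitor path weaves between the balls at negligible extra cost. The subtlety is that in the hyperbolic plane a path can thread past many disjoint small balls along a nearby geodesic for almost no extra length, so one cannot naively charge $\varepsilon$ per avoided ball. What saves the argument is that the balls $B_i$ are precisely the ones \emph{$c_0$ passes through to depth $\varepsilon$}, so they lie essentially on one geodesic, spaced out — and a path from $o$ to $x$ that avoids a positive fraction of them while they are roughly collinear between $o$ and $x$ must deviate from that geodesic by $\gtrsim\varepsilon$ on a set of times of measure $\gtrsim\varepsilon k$, by a convexity (thin-triangle) estimate for $d_{\Hyp^2}$. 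Making this last comparison uniform — independent of how the $B_i$ are distributed along the segment, which is exactly what the $\mathcal{E}_o(R)$ condition controls only in an averaged sense — is the delicate point, and is presumably why the author introduced the factor-of-two gap between $\varepsilon$ and $2\varepsilon$ (the "technical reason" flagged in the footnote): the inner radius $\varepsilon$ gives room for the detour estimate while the outer radius $2\varepsilon$ guarantees genuine annulus crossings for paths that do not avoid a ball.
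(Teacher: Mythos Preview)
Your overall architecture matches the paper's: count the balls $B(\gamma_i x_0,\varepsilon)$ that the $g_0$-geodesic $c_0$ meets (there are $N(R)\geq\frac{\kappa}{2\varepsilon}R$ of them), then run a per-ball dichotomy on the $g_m$-minimiser $c_m$ and sum the excesses. But the dichotomy you set up does not work as stated, and the paper's is genuinely different.

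Your ``crosses'' case is the gap. You define $A$ as the set of indices $i$ for which the competitor $c$ enters the \emph{outer} ball $B(\gamma_i x_0,2\varepsilon)$ and then claim $c$ ``crosses the annulus twice, contributing $\geq 2\varepsilon(\cosh\eta-1)$ of excess''. Entering the outer ball gives no such lower bound: $c$ may dip in to depth $\delta$ arbitrarily small and exit, picking up only $O(\delta)(\cosh\eta-1)$ of excess. Nothing forces $c$ to reach the inner ball or to traverse any annulus of width $\varepsilon$. So step~(4) for $i\in A$ fails, and your summation in step~(5) is unsupported.

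The paper's fix is to change both the centres and the radius in the dichotomy. It picks points $x_i\in c_0\cap B(\gamma_i x_0,\varepsilon)$ (so $B(x_i,\varepsilon)\subset B(\gamma_i x_0,2\varepsilon)$, which is precisely what the $\varepsilon$-versus-$2\varepsilon$ footnote is for, not what you guessed) and asks whether $c_m$ enters the \emph{much smaller} ball $B(x_i,a\varepsilon)$, with $a=\tfrac12(1-\tfrac{1}{\cosh\eta})$. If yes, then $c_m$ must traverse at least $2(1-a)\varepsilon$ of length inside $B(x_i,\varepsilon)$, all of it weighted, and the choice of $a$ makes the weighted length exceed the $g_0$-diameter $2\varepsilon$ by a fixed amount $\varepsilon(\cosh\eta-1)$. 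If no, the paper slices $\Hyp^2$ by lines $\Delta_i,\Omega_i$ perpendicular to $c_0$ through the midpoints $y_i$ and through $x_i$, and uses orthogonal projection onto these lines plus the hyperbolic Pythagorean identity $\sinh L=\cosh(a\varepsilon)\sinh l$ to show that any crossing of $\Omega_i$ that misses $B(x_i,a\varepsilon)$ is longer than the $c_0$-segment by a fixed $u>0$ (this needs $l$ bounded below, which is arranged via the injectivity radius). Your vague ``convexity/thin-triangle'' suggestion for the avoid case points in the right direction, but the concrete mechanism --- perpendicular checkpoints and the explicit trig computation --- is what makes it uniform in $i$ and independent of how the balls are spaced along $c_0$.
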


\begin{proof}
Let $c_0$ be the geodesic for $g_0$ and  $c_m$ be a minimizing geodesic for $g_m$ between $o$ and $x$. Let $d$ be the hyperbolic distance between $o$ and $x$, $d=d_{\Hyp^2}(o,x)$, and we parametrize $c_0$ by unit speed  we thus have $c_0(d) =x$. Let $N(R)$ be the number of intersections between $\pi K$ and $c_0([0,R])$, that is $N$ is the number of connected components of $c_0([0,R]) \cap \pi K$. On one hand, all  components of  $c_0([0,R]) \cap \pi K$ are inside balls of radius $\epsilon$, hence $c_0$ "stays" at most $2\epsilon$  in each components. On the other hand, the hypothesis $v\in  \mathcal{E}_o (R) $, implies that 
$$\frac{1}{R} Leb \{ t \in [0,R] | c_0(t) \cap \pi K \neq \emptyset \} > L(K/\G)-\kappa =\kappa>0.$$
These two facts imply that $2\epsilon N(R) \geq \kappa R$, that is to say 
\begin{equation}\label{eq -majoration N(R)}
N(R) \geq \frac{\kappa}{2\epsilon } R.
\end{equation}

For $i\leq N(R)$, let $t_i \in [0,d]$ such that $c_0(t_i) \in\pi K$ and $c_0[t_{i-1},t_i] \setminus \pi K $ is connected: we just have chosen a point $x_i = c_0(t_i)$ in each balls  of $\pi K$ crossing $c_0$. There exists $\g_i\in \G$ such that  $x_i \in B(\g_i x_0,\epsilon) $ hence $B(x_i,\epsilon) \subset B(\g_i x_0, 2\epsilon)$ on which the metric $g_m$ is  $g_m=\cosh^2(\eta) g_0$.  
See Figure \ref{fig balls}.
Therefore the geodesic $c_0$ is divided into $N(R)$ segments: $[x_i,x_{i+1}]$, such that for every $i$ we know that on the ball $B(x_i,\epsilon)$ the metric $g_m$ is given by $g_m=\cosh^2(\eta) g_0$.  We want a lower bound on $d_m(o,x)$, therefore we can estimate the length of $c_m$ with the metric given by $\cosh^2(\eta) g_0$ on the smaller balls $B(x_i,\epsilon)\subset B(\g_i x_0, 2\epsilon)$ and, $g_0$ on the rest of the plane. 

\begin{figure}[H]
\begin{center}   \includegraphics[scale=0.62]{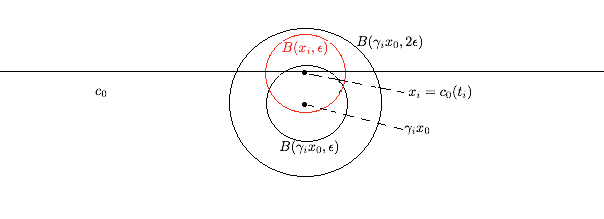}
    \caption{\label{fig balls} $c_0$ meets $B(\gamma_i x_0, \epsilon)$. $B(x_i, \epsilon)\subset B(\gamma_i x_0, 2\epsilon)$. }
\end{center}
\end{figure}

We call $y_i$ the middle of $[x_i,x_{i+1}]$. We now restraint our attention on one segment $[y_i,y_{i+1}]$. Let $0<a<1$ whose  dependence on  $\eta$ will be made clear in the rest of the proof. We are going to analyse two different cases. 
\paragraph{ Assume $c_m$ crosses $B(x_i,a \epsilon)$\\}  Let $\Delta_i$ be the lines (geodesics in $\Hyp^2$ )  orthogonal to $c_0$ and passing through $y_i$. Let $z^1_i$ and $z^2_i$ be the end points of the diameter of $B(x_i,\epsilon)$ defined by $z^1_i =c_0(t_i- \epsilon)$ and $z^2_i = c_0(t_i +\epsilon)$, and call $D^1_i$ and  $D^2_i$ the lines orthogonal to $c_0$ and passing through  $z^1_i$ and $z^2_i$. See figure \ref{fig cm rencontre B}. \\
We want to consider the intersections between $c_m$ and the lines $\Delta_i$, $D^1_i$ and   $D^2_i$. There might have many intersections. We will call first (resp. last) intersection of $c_m$ with a line $D$ the point $c_m(t_f)$ (resp $c_m(t_l)$) where $t_f := \inf\{t \, | c_m(t)\in D\}$ (resp $t_l := \sup\{t \, | c_m(t)\in D\}$).\\
Let $A'_i,B'_i$ and $C'_i$  be the last  intersections of $c_m$ with, respectively, $\Delta_i$, $D^1_i$   and $D^2_i$. Let $B_i,C_i$ and $A_{i+1}$ be the first intersections of $c_m$ with, respectively, $D^1_i$ ,  $D^2_i$ and $\Delta_{i+1}$. This divides $c_m$ in five connected components: 
$[A'_i,B_i]$, $[B_i,B_i']$, $[B'_i, C_i]$, $[C_i,C'_i]$, $[C'_i, A_{i+1}]$.

 Our work will be to give a lower bound for the  length of each components cf. Figure \ref{fig cm rencontre B}. Since it might happen that $B_i=B'_i$ and $C_i=C_i'$ the bound on the length of those two components will be trivial: $d_m(B_i,B'_i)\geq 0$ and $d_m(C_i,C'_i)\geq 0$.

The $g_m$-length of $c_m$ from $A'_i$ to  $B_i$ is equal (or larger) to its $g_0$-length  since the metric $g_m$ is equal to the metric $g_0$ outside $K$. Moreover the $g_0$-length of $c_m$ from $A'_i$ to $B_i$ is greater than $d_{g_0} (y_i,z^1_i)$  since the orthogonal projection decreases lengths. We then have  $$d_m(A'_i,B_i) \geq d_{g_0} (y_i,z^1_i).$$
 For the same reasons we have 
$$d_m(C'_i,A_{i+1}) \geq d_{g_0} (z^2_i,y_{i+1}).$$

 We want to give a lower bound for the  $g_m$-length of $c_m$ between $B'_i$ and $C_i$.  We made the assumption that  $c_m$ crosses the ball  $B(x_i,a\varepsilon)$ hence  $c_m$ stays at least $2\varepsilon -2a\varepsilon$ in the ball  $B(x_i,\varepsilon)$. In other words if $c_m$ is unitary  \emph{for $g_0$} we have $\Leb\{t \, | \, c_m(t) \cap B(x_i,\varepsilon) \neq \emptyset \} \geq 2\varepsilon -2a \varepsilon$.  In the ball $B(x_i,\varepsilon)$,  the metric $g_m$ is equal to   $\cosh(\eta)^2g_0$ hence the $g_m$-length   satisfies
 \begin{eqnarray*}
 d_m( B'_i,C_i) &\geq & \int_{\{t\,  | \, c_m(t) \cap B(x_i,\varepsilon) \neq \emptyset \} }\| \dot{c}_m(t) \|_m dt \\ 
						&= & \int_{\{t \, | \, c_m(t) \cap B(x_i,\varepsilon) \neq \emptyset \} }\cosh(\eta) \\ 
				 		&\geq & \varepsilon \cosh(\eta) (2 -2a).\\
\end{eqnarray*}  
Choose  $a>0$ such that
$\cosh(\eta) (2\varepsilon -2a\varepsilon)> 2\varepsilon $, that is to say $a\leq 1 -\frac{1}{\cosh  (\eta)}$. In order to fix the idea we set $a:=\frac{1}{2}(1 -\frac{1}{\cosh  (\eta)})$. This implies
\begin{eqnarray*}
d_m( B'_i,C_i) &\geq &\varepsilon \cosh(\eta) (2 -2a) \\
					 &=&\varepsilon \cosh(\eta) \left(2 -\left(1 -\frac{1}{\cosh  (\eta)}\right)\right) \\	
					& = & (\cosh(\eta) +1) \varepsilon\\
					&= & 2\varepsilon +  \varepsilon[\cosh(\eta) -1)]\\
					&=& d_{g_0} (z^1_i,z^2_i) + \varepsilon[\cosh(\eta) -1)].
\end{eqnarray*}
Finally we proved 
\begin{eqnarray}\label{eq - dm est plus grande que dhyp si cm rencontre B}
d_{m} (A_i,A_{i+1})\geq d_{m} (A'_i,A_{i+1}) \geq d_{g_0}(y_i,y_{i+1}) +\varepsilon[\cosh(\eta) -1].
\end{eqnarray}

\begin{figure}[H]
\begin{center}   \includegraphics[scale=0.4]{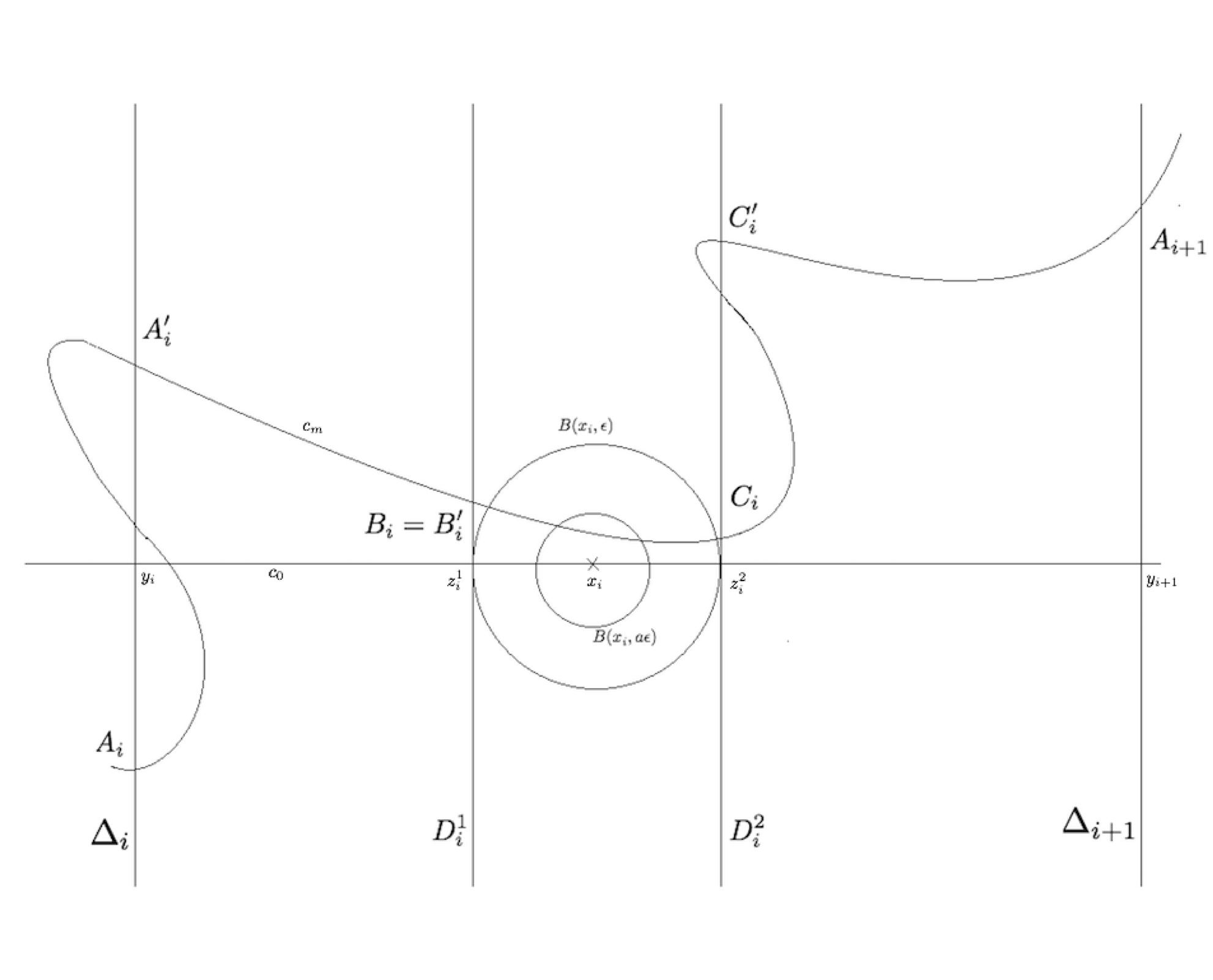}
    \caption{\label{fig cm rencontre B} $c_m$ crosses $B(x_i,a\varepsilon)$.}
\end{center}
\end{figure}

\paragraph{ Assume $c_m$  does not cross $B(x_i,a \varepsilon)$\\}
Let $\Delta_i$ be the line orthogonal to $c_0$ and passing through $y_i$ and $\Omega_i$ the one through $x_i$. Call $A'_i$ the last intersection of $c_m$ and $\Delta_i$ and $E_i$ the first intersecton of $c_m$ with $\Omega_i$.  Since $c_m$ does not cross $B(x_i a\epsilon)$, $E_i$ is in one of the connected component of $\Omega_i \backslash B(x_i, a\epsilon)$.  Named $e_i$ the intersection of $S(x_i, a\epsilon)$ (the sphere of center $x_i$ and diameter $a\epsilon$) and  $\Omega_i$ in the same connected component as $E_i$, this is also the orthogonal projection of $E_i$ on $B(x_i, a\epsilon)$. See figure \ref{fig c m ne renconrtre pas B}.\\
We parametrise the geodesic $\Omega_i$ by $\R$, we give $\omega : \R \tv \Hyp^2$ such that $\omega(\R) =\Omega_i$. We suppose that $\omega(0) =x_i$ and the orientation is chosen in order to have $\omega(a\epsilon) =e_i$. The function $t\tv d_{g_0} (\omega(t) ,\Delta_i)$ is convex, which has a minimum at $0$, it is hence increasing on $\R^+$. Therefore, $d_{g_0}(\Delta_i,E_i) \geq  d_{\Hyp^2}(\Delta_i,e_i)$. It follows that
$$d_m(A'_i,E_i) \geq d_{\Hyp^2}(A'_i,E_i)\geq d_{g_0}(\Delta_i,E_i) \geq  d_{g_0}(\Delta_i,e_i).$$
Let us compute $d_{g_0}(\Delta_i,e_i)$. We fix the following notations :  
\begin{eqnarray*}
L&=&d_{g_0}(\Delta_i,e_i)\\
l&=& d_{g_0}(y_i,x_i)\\
H&=&d_{g_0}(y_i,e_i)
\end{eqnarray*}
Now Pythagore's theorem in hyperbolic geometry for the triangle $(y_i x_i e_i)$ gives 
$$\cosh(l)\cosh(a\varepsilon) = \cosh(H).$$
Let $\theta$ be the angle $\widehat{x_iy_ie_i}$. We have 
$$\cos(\theta) = \frac{\tanh(l)}{\tanh(H)},$$
and
$$\sin(\pi/2 -\theta) = \frac{\sinh(L)}{\sinh(H)}.$$
Hence \begin{eqnarray*}
\sinh(L)& =& \sinh(H) \frac{\tanh(l)}{\tanh(H)}\\
			&=&		\cosh(H) \tanh(l)\\
			&=&\cosh(a\varepsilon)\sinh(l).
\end{eqnarray*}

From this equation, we \emph{cannot} conclude that $L>l+u $ for some $u>0$. Indeed if $L$ goes to $0$ so does $l$. To avoid this problem we are going to assume that $l$ is greater than the injectivity radius of $S$. 

Remark the following property of $\sinh$ which is a consequence of easy calculus. For all $x_0>0$ and  $\varpi>1$, there exists $u>0$, such that for all $x>x_0$, we have $\varpi \sinh(x) \geq \sinh(x+u)$. Now we can choose  $y_i$ on $c_0$ in order to have $d_{g_0} (x_i,y_i)\geq s/2$ where $s$ is the injectivity radius of $\Hyp^2/\G$. Consequently, applying the previous property with  $\varpi= \cosh(a\varepsilon)$ and $x_0=s/2$, there exists $u>0$ such that. 
$$\cosh(a\varepsilon)\sinh(l)\geq \sinh(l+u).$$
Since $\sinh$ is increasing we deduce that 
$$L \geq l+u.$$
Altogether, we show that there exists $u>0$ such that
$$d_m(A'_i,E_i) \geq d_{g_0}(y_i,x_i)+u.$$
By the same arguments we can show that 
$$d_m(E'_i,A_{i+1})  \geq  d_{g_0}(x_i,y_{i+1})+ u. $$
($E'_i$ is the last intersection of $c_m$ with $\Omega_i$). 
Hence, if $c_m$ does not meet $B(x_i,a \varepsilon)$, the $g_m$-length of $c_m$ between $A_i$ and $A_{i+1}$ satisfies, (taking trivial bounds for first and last intersections )
\begin{eqnarray}\label{eq -minoration dm si cm ne rencontre pas B(x,aepsilon)}
d_{m} (A_i,A_{i+1}) \geq d_{g_0}(y_i,y_{i+1})  +2u.
\end{eqnarray}

\begin{figure}[h]
\begin{center}   \includegraphics[scale=0.4]{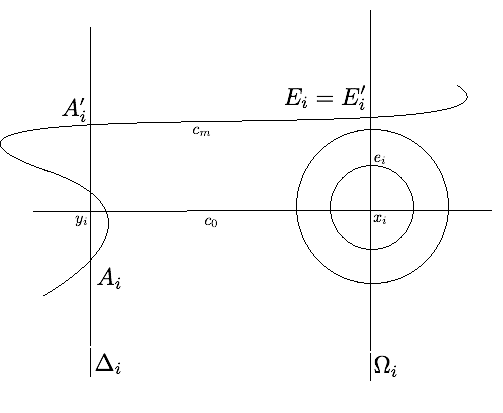}
    \caption{\label{fig c m ne renconrtre pas B} $c_m$ does not cross $B(x_i,a\varepsilon)$.}
\end{center}
\end{figure}

\paragraph{Conclusion}
Let $\alpha:=\min\{\varepsilon[\cosh(\eta) -1] ;2u\}$. From (\ref{eq - dm est plus grande que dhyp si cm rencontre B}) and (\ref{eq -minoration dm si cm ne rencontre pas B(x,aepsilon)}) we have  : 
$$d_{m} (A_i,A_{i+1}) \geq d_{g_0}(y_i,y_{i+1})   +\alpha.$$
Summing on  $i$ we get 
$$d_{m}( o,x) \geq d_{g_0} (o,x) +N(R) \alpha.$$
Equation (\ref{eq -majoration N(R)}) and the fact that $d_{g_0}(o,x)\leq 2R$ \footnote{this is where we use the upper bound on  $d_{g_0}(o,x)$.} imply that 
$$N(R)\geq \frac{\kappa}{2\varepsilon} R \geq \frac{\kappa}{4\varepsilon}  d_{g_0} (o,x). $$
Subsequently,  
$$d_{m}( o,x) \geq  \left( 1+ \frac{\alpha\kappa}{4\varepsilon} \right) d_{g_0} (o,x) .$$
This proves the Lemma with $C = \left( 1+ \frac{\alpha\kappa}{4\varepsilon} \right)$.
\end{proof}

We are now going to compare the entropy of  $(\D,d_m)$ with the one of $\Hyp^2$. 
Let us define $$\mathcal{F}_o(R)  =\{\exp(tv) \, | \, t\in \R^+, v\in \mathcal{E}_o(R)\}.$$
We note by  $B_m(o,2R)$ the ball of radius $2R$ for the $d_m$ distance. 
\begin{lemme}\label{lem Bm(o,2R) est inclus dans des boules hyperboliques plus petites}
Let $C' := \min(2,C)$ where $C$ satisfies the Lemma \ref{lem -clef}. We have for all  $o\in \D$, and all $R>0$ :
$$B_m(o,2R) \subset B_{\Hyp^2}\left(o,2R/C'\right)\cup  \Big( B_{\Hyp^2}(o,2R) \cap \mathcal{F}^c_o(R)\Big).$$
\end{lemme}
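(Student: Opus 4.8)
The plan is to take a point $x \in B_m(o,2R)$ and split into cases according to whether $x$ is within hyperbolic distance $2R/C'$ of $o$ or not. If $d_{\Hyp^2}(o,x) \le 2R/C'$, then $x$ lies in the first set $B_{\Hyp^2}(o, 2R/C')$ and we are done, so assume $d_{\Hyp^2}(o,x) > 2R/C'$. Since $C' = \min(2,C) \le 2$, we have in particular $d_{\Hyp^2}(o,x) > R$. On the other hand, because $d_m \ge d_{g_0}$ pointwise (the weight $\cosh^2(\eta)$ is $\ge 1$, so $g_m \ge g_0$ everywhere), we get $d_{\Hyp^2}(o,x) \le d_m(o,x) \le 2R$; hence $x \in B_{\Hyp^2}(o,2R)$ automatically. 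So it remains to show that $x \notin \mathcal{F}_o(R)$, i.e.\ that $x$ does not lie on any geodesic ray $\exp(tv)$ with $v \in \mathcal{E}_o(R)$ and $t \ge 0$.

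Suppose for contradiction that $x = \exp(t_0 v)$ for some $v \in \mathcal{E}_o(R)$ and $t_0 \ge 0$; since $d_{\Hyp^2}(o,x) = t_0 > R$ and $d_m(o,x) \le 2R$ forces $d_{\Hyp^2}(o,x) \le 2R$, we have $t_0 \in (R, 2R]$, so $x \in \{\exp(tv)\, | \, t \in [R,2R]\}$. Then Lemma~\ref{lem -clef} applies and gives $d_m(o,x) \ge C\, d_{\Hyp^2}(o,x)$. Combining with $d_{\Hyp^2}(o,x) > 2R/C' \ge 2R/C$ yields $d_m(o,x) > C \cdot 2R/C = 2R$, contradicting $x \in B_m(o,2R)$. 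Therefore $x \in \mathcal{F}^c_o(R)$, and hence $x \in B_{\Hyp^2}(o,2R) \cap \mathcal{F}^c_o(R)$, which is contained in the right-hand side. This proves the inclusion.

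The only subtlety worth spelling out is the boundary case $t_0 = R$ exactly (or $t_0$ slightly above $R$): Lemma~\ref{lem -clef} is stated for $t \in [R,2R]$, so this is covered, but one should double-check that the definition of $\mathcal{F}_o(R)$ using $t \in \R^+$ rather than $t \in [R,2R]$ does not cause trouble — it does not, precisely because the a priori bounds $R < d_{\Hyp^2}(o,x) \le 2R$ (coming from the case assumption and from $d_m \ge d_{g_0}$) pin $t_0$ into exactly the range where Lemma~\ref{lem -clef} is available. I do not anticipate a genuine obstacle here; the lemma is essentially a bookkeeping consequence of Lemma~\ref{lem -clef} together with the trivial comparison $d_m \ge d_{g_0}$, and the main content has already been isolated in Lemma~\ref{lem -clef}.
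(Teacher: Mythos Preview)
Your proof is correct and follows essentially the same approach as the paper's: both rely on the trivial comparison $d_m \ge d_{g_0}$ together with Lemma~\ref{lem -clef}, and both use $C'\le 2$ to ensure that the region $d_{\Hyp^2}(o,x)\le R$ is already inside $B_{\Hyp^2}(o,2R/C')$. The only cosmetic difference is that the paper first decomposes $B_m(o,2R)$ according to $\mathcal{F}_o(R)$ versus $\mathcal{F}^c_o(R)$ and then argues directly, whereas you split first on the value of $d_{\Hyp^2}(o,x)$ and then argue by contradiction that $x$ cannot lie in $\mathcal{F}_o(R)$; the logical content is identical.
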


\begin{proof}
Indeed we have $B_m(o,2R) =\Big( B_m(o,2R) \cap\mathcal{F}_o(R) \Big) \cup \Big( B_m(o,2R)\cap \mathcal{F}^c_o(R) \Big)$.
Let $x\in B_m(o,2R) \cap\mathcal{F}_o(R)$. Since $d_{\Hyp^2} (o,x)\leq d_m(o,x)$, it follows that $d_{\Hyp^2}(o,x)\leq 2R$. There are only two possibilities. If $d_{\Hyp^2}(o,x) \leq R$, we have in particular $d_{\Hyp^2}(o,x) \leq \frac{2R}{C'}$.  However, if $d_{\Hyp^2}(o,x) \geq R$, we apply Lemma \ref{lem -clef} and we get  $d_{\Hyp^2}(o,x) \leq \frac{2R}{C} \leq \frac{2R}{C'}$. Therefore, 
$$B_m(o,2R) \cap\mathcal{F}_o(R)  \subset  B_{\Hyp^2}(o,\frac{2R}{C'}) \cap\mathcal{F}_o(R) \subset B_{\Hyp^2}(o,\frac{2R}{C'}). $$ 
Since we also have for $R>0$,  $B_m(o,2R)  \subset  B_{\Hyp^2}(o,2R)$, this gives
$$B_m(o,2R)\cap \mathcal{F}^c_o(R)  \subset B_{\Hyp^2}(o,2R) \cap \mathcal{F}^c_o(R),$$
and prove the lemma. 
\end{proof}
The  Liouville measure on $T^1\Hyp^2$ is the product of the riemannian measure of $\Hyp^2$ with the angular measure on every fiber. We denote this product by $L=d\mu(x)\times d\theta(x)$. Our aim is to show that the set  $\mathcal{E}^c_o (R)$ is small and the volume of  $\Big( B_{\Hyp^2}(o,2R) \cap \mathcal{F}^c_o(R)\Big)$ is small compared to the one of  $B_{\Hyp^2}(o,2R) $. For this we are going to use a large deviation theorem of Y. Kifer \cite{kifer1990large} which gives an upper bound on the mass of the vectosr which do not behave as the Liouville measure. 

Let $\mathcal{P}$ be the set of probability measures on  $T^1\Hyp^2/\G$ endowed with the weak topology. Let  $\mathcal{P}^t$ be the subset of $\mathcal{P}$  of probability measures invariant by the geodesic flow.   We also denote by $L$  the Liouville measure on the quotient $T^1\Hyp^2/\G$. Recall that for a vector $v\in T^1\Hyp^2/\G$ we denote  by $\zeta_v^R$ the  probability measure given for all borelians subset  $E\subset T^1\Hyp^2/\G$ by
$$\zeta^v_R (E) =\frac{1}{R	}\int_0^R \chi _E \left( \phi_t^{\Hyp^2/\G}(v) \right) dt.$$

\begin{theorem}\cite[Theorem 3.4]{kifer1990large}\label{th large deviation sur les vecteurs}
Let $\overline{A}$ be a compact subset of $\mathcal{P}$,
 $$\limsup_{T \tv \infty} \frac{1}{T} \log  L \left\{  v\in T^1\Hyp^2/\G\, |\, \,\zeta_v^T\in \overline{A}  \right\}  \leq -\inf_{\mu \in \overline{A}\cap \mathcal{P}^t} f(\mu) $$
where  $f(\mu) = 1- h_\mu (\phi_t ^{\Hyp^2/\G} ) $ and $h_\mu (\phi_t ^{\Hyp^2/\G}) $ is the entropy of the geodesic flow $\phi_t ^{\Hyp^2/\G}$ with respect to $\mu$. 
\end{theorem}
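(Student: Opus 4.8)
This is the level-two large deviation upper bound of Donsker--Varadhan type for the empirical measures $\zeta_v^T$ of the geodesic flow, with the Liouville measure as reference, and the plan is to follow Kifer's scheme through a ``free energy'' functional and its Legendre transform. Write $\phi_t$ for the geodesic flow on $T^1\Hyp^2/\G$, $S_Tg(v)=\int_0^T g(\phi_s v)\,ds$ for the Birkhoff integral of a continuous function $g$, and $P(\cdot)$ for topological pressure. Since $\Hyp^2/\G$ has constant curvature $-1$, the Liouville measure $L$ coincides with the Bowen--Margulis measure, i.e.\ with the unique measure of maximal entropy, and $h_{\mathrm{top}}(\phi_t)=P(0)=1$. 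The three steps are: (i) establish the free energy limit $Q(g)=P(g)-1$; (ii) deduce the upper bound by an exponential Chebyshev inequality together with a finite covering of $\overline{A}$; (iii) identify the Legendre transform that appears with $\mu\mapsto 1-h_\mu(\phi_t)$ via the variational principle.

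For step (i) I would show that for every continuous $g$ on $T^1\Hyp^2/\G$,
\[
Q(g):=\lim_{T\to\infty}\frac1T\log\int_{T^1\Hyp^2/\G}\exp\big(S_Tg(v)\big)\,dL(v)=P(g)-1 .
\]
This is where the hyperbolic dynamics enters: the geodesic flow is Anosov, hence expansive and topologically mixing, so it has Bowen's specification property (equivalently, it admits Markov sections in the sense of Bowen and Ratner), and $L$, being the measure of maximal entropy, is the equilibrium state for the zero potential. The standard thermodynamic formalism then gives that $\frac1T\log\int e^{S_Tg}\,dL$ converges to $P(g)-P(0)=P(g)-1$.

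For step (ii), fix a compact $\overline{A}\subset\mathcal{P}$. Because $\int(g\circ\phi_\tau-g)\,d\zeta_v^T=O(1/T)$ uniformly in $v$, every weak-$*$ limit of the $\zeta_v^T$ is $\phi_t$-invariant; hence if $\overline{A}\cap\mathcal{P}^t=\emptyset$ then $L\{\zeta_v^T\in\overline{A}\}=0$ for all large $T$ and the bound is trivial. Otherwise cover $\overline{A}$ by finitely many weak-$*$ neighbourhoods; on a neighbourhood $U_\mu$ of $\mu\in\overline{A}\cap\mathcal{P}^t$, choosing $g$ with $\int g\,d\nu\ge\int g\,d\mu-\varepsilon$ for all $\nu\in U_\mu$, Markov's inequality and step (i) give
\[
L\{v:\zeta_v^T\in U_\mu\}\le e^{-T(\int g\,d\mu-\varepsilon)}\int e^{S_Tg(v)}\,dL(v)\le e^{-T(\int g\,d\mu-\varepsilon-Q(g)-o(1))}.
\]
Optimising over $g$ and using $Q(g)=P(g)-1$,
\[
\sup_g\Big(\int g\,d\mu-Q(g)\Big)=\sup_g\Big(\int g\,d\mu-P(g)\Big)+1=1-h_\mu(\phi_t),
\]
the last equality being the Legendre duality $h_\mu(\phi_t)=\inf_g\big(P(g)-\int g\,d\mu\big)$ for invariant $\mu$ (for non-invariant $\mu$ the supremum is $+\infty$, which is why only $\overline{A}\cap\mathcal{P}^t$ survives). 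A union bound over the finite cover, letting the mesh of the cover tend to $0$ and using upper semicontinuity of $\mu\mapsto h_\mu(\phi_t)$ (a consequence of expansiveness), produces the claimed inequality.

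The main obstacle is step (i): the existence of the free energy limit and its identification with $P(g)-1$. In constant curvature this is classical because $L$ is literally the measure of maximal entropy, but it still rests on the full thermodynamic formalism of Anosov flows --- expansiveness, specification, the Gibbs property of equilibrium states --- together with the variational principle and the pressure/entropy Legendre duality. A secondary technicality is reducing the abstract large deviation machinery from a flow to a map (or to a suspension over a subshift of finite type), and verifying the upper semicontinuity of the entropy functional that is needed to pass to the infimum over the compact set $\overline{A}\cap\mathcal{P}^t$.
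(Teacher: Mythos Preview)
The paper does not actually prove this theorem: it is quoted as \cite[Theorem 3.4]{kifer1990large}, and the only remarks the author adds are (a) that the applicability to the geodesic flow of $\Hyp^2/\G$ is explained in Kifer's paper just after his Theorem~3.4, and (b) that the identification of the rate function with $f(\mu)=1-h_\mu(\phi_t^{\Hyp^2/\G})$ is justified in \cite[Chapter 7]{paulin2015equilibrium}. So there is no ``paper's own proof'' to compare against; the author treats this as a black box.

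Your sketch is in fact a faithful outline of Kifer's original argument: the Donsker--Varadhan scheme via the free-energy functional $Q(g)=\lim_{T\to\infty}\frac{1}{T}\log\int e^{S_Tg}\,dL$, identified with $P(g)-P(0)=P(g)-1$ using thermodynamic formalism for the Anosov geodesic flow (expansiveness, specification, $L$ being the equilibrium state for the zero potential in constant curvature), followed by the exponential Chebyshev/Markov inequality on a finite weak-$*$ cover of $\overline{A}$ and the Legendre duality $\sup_g\big(\int g\,d\mu-P(g)\big)=-h_\mu(\phi_t)$ for invariant $\mu$. The use of upper semicontinuity of entropy (from expansiveness) to pass to the infimum over $\overline{A}\cap\mathcal{P}^t$ is also the standard step. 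In short, your proposal is correct and is precisely the argument that the paper is citing rather than reproving; there is no alternative route in the paper to contrast it with.
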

The fact that the theorem can be applied on this setting is explained after the Theorem 3.4 in \cite{kifer1990large}. In this reference the function $f$ is given by a formula which seems different.  One can look at \cite[Chapter 7]{paulin2015equilibrium}, where the authors explain in details why the geodesic flow of  negatively curved surfaces satisfies the hypothesis of Kifer's Theorem, and that one can take $f(\mu) = 1- h_\mu (\phi_t ^{\Hyp^2/\G} ) $.

\begin{lemme}\label{lem il existe o tel que theta(Eo (R) est petit}
There exists $o\in \Hyp^2$, $\alpha>0$ and $R_0>0$ such that for all  $R>R_0$ 
$$\theta_o(\mathcal{E}^c_o(R) )\leq e^{-\alpha R}.$$
\end{lemme}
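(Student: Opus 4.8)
The plan is to deduce the statement from Kifer's large deviation theorem (Theorem \ref{th large deviation sur les vecteurs}) applied to a well-chosen compact set of measures, and then to pass from the Liouville-measure estimate to an estimate on a single angular fiber by an averaging argument. First I would reformulate the event $\{v \in T^1\Hyp^2 \mid \zeta_v^R(K) \leq L(K/\G) - \kappa\}$, which projects to $T^1\Hyp^2/\G$, as $\{v \mid \zeta_v^R \in \overline{A}\}$ where $\overline{A} := \{\mu \in \mathcal{P} \mid \mu(K/\G) \leq L(K/\G) - \kappa\}$. This set $\overline{A}$ is closed (the map $\mu \mapsto \mu(K/\G)$ is upper semicontinuous on $\mathcal{P}$ for the weak topology since $K/\G$ is, up to boundary, essentially a closed-ish set — more carefully one uses that $K = T^1(\G\cdot B(x_0,\varepsilon))$ is a finite union of closed balls' unit tangent bundles, so the indicator is bounded and its set of discontinuities is Liouville-null, hence $\mu\mapsto\mu(K/\G)$ is continuous on the relevant part; in any case one can shrink to a closed sub-event) and it is a closed subset of the compact space $\mathcal{P}$, hence compact.

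The key point is then that $\inf_{\mu \in \overline{A}\cap \mathcal{P}^t} f(\mu) > 0$, where $f(\mu) = 1 - h_\mu(\phi_t^{\Hyp^2/\G})$. By the variational principle for the geodesic flow on the hyperbolic surface $\Hyp^2/\G$, the topological entropy equals $1$ and the \emph{unique} measure of maximal entropy is the Liouville (Bowen--Margulis--Liouville) measure $L$; thus $f(\mu) = 0$ if and only if $\mu = L$. Since $L \notin \overline{A}$ (indeed $L(K/\G) - \kappa < L(K/\G)$ strictly), and since $\overline{A}\cap\mathcal{P}^t$ is compact while $f$ is lower semicontinuous (entropy is upper semicontinuous in this setting — the geodesic flow is expansive, so $h_\mu$ is u.s.c. in $\mu$), the infimum is attained and is strictly positive; call it $2\alpha_0 > 0$. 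Kifer's theorem then gives $\limsup_R \frac1R \log L\{v \in T^1\Hyp^2/\G \mid \zeta_v^R \in \overline{A}\} \leq -2\alpha_0$, so for $R$ large,
$$
L\left(\{v \in T^1\Hyp^2/\G \mid v \in \mathcal{E}^c(R)\}\right) \leq e^{-\tfrac{3}{2}\alpha_0 R},
$$
where here $\mathcal{E}^c(R)$ is regarded downstairs. Lifting to a fundamental domain $\mathcal{D}_0 \subset \Hyp^2$ for the $\G$-action, and writing $L = d\mu(x)\times d\theta(x)$ as in the paper, this reads $\int_{\mathcal{D}_0} \theta_x(\mathcal{E}^c_x(R))\, d\mu(x) \leq e^{-\frac{3}{2}\alpha_0 R}$.

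Finally I would convert this integral bound into a pointwise bound by an averaging/Markov argument. For each $R$, the set of $x \in \mathcal{D}_0$ with $\theta_x(\mathcal{E}^c_x(R)) > \mu(\mathcal{D}_0)^{-1} e^{-\alpha R}$ (for a suitable $\alpha$ slightly smaller than $\frac32\alpha_0$, say $\alpha = \alpha_0$) has $\mu$-measure at most $e^{-\frac12\alpha_0 R} \to 0$; hence for each large $R$ a ``good'' $x$ exists. To get a \emph{single} $o$ working for all large $R$, I would run the argument along a sequence $R_n \to \infty$ with $\sum_n e^{-\frac12\alpha_0 R_n} < \mu(\mathcal{D}_0)$ (e.g.\ $R_n = n$), so that by Borel--Cantelli the set of $x$ that are ``bad'' for infinitely many $R_n$ is $\mu$-null; pick $o$ outside it. For $R$ between consecutive $R_n$'s one uses monotonicity-type comparisons of $\mathcal{E}^c_o(R)$ (a geodesic spending a fixed proportion of time in $K$ on $[0,R_{n+1}]$ does nearly so on $[0,R_n]$, at the cost of adjusting $\kappa$ and $\alpha$), which lets one interpolate and conclude $\theta_o(\mathcal{E}^c_o(R)) \leq e^{-\alpha R}$ for all $R > R_0$ after shrinking $\alpha$ once more.

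The main obstacle I expect is the identification $\{f = 0\} = \{L\}$ together with the semicontinuity needed to make the infimum strictly positive — i.e.\ invoking cleanly that Liouville measure is the unique measure of maximal entropy for $\phi_t^{\Hyp^2/\G}$ and that $h_\mu$ is upper semicontinuous on the compact set $\overline{A}\cap\mathcal{P}^t$ — and, secondarily, the passage from the Liouville (two-variable) estimate to a pointwise angular estimate uniform in $R$, where one must be a little careful that the fiberwise bad sets do not conspire across scales; this is handled by the Borel--Cantelli step above.
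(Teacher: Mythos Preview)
Your argument follows the same skeleton as the paper's: apply Kifer's large deviation theorem to the set of measures with $\mu(K/\G)\leq L(K/\G)-\kappa$, use that Liouville is the \emph{unique} measure of maximal entropy for $\phi_t^{\Hyp^2/\G}$ to force a strictly positive rate, and then descend from the Liouville estimate to a fiberwise angular estimate via the product structure $L=d\mu(x)\times d\theta(x)$. Two differences are worth noting.

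First, your treatment of the topology of $\overline{A}$ is slightly off: for $K$ closed, $\mu\mapsto\mu(K)$ is upper semicontinuous, but this makes $\{\mu(K)\geq c\}$ closed, not $\{\mu(K)\leq c\}$. The paper handles this point differently and more cleanly: it acknowledges that $A$ is not closed, observes that $\overline{A}$ is contained in the analogous set defined with the \emph{open} ball $B^{\circ}(x_0,\varepsilon)$, and then remarks that since $T^1 S(x_0,\varepsilon)$ is transverse to the flow the empirical measures $\zeta_v^R$ never charge it, so $\{v:\zeta_v^R\in A\}=\{v:\zeta_v^R\in\overline{A}\}$. This replaces your ``shrink to a closed sub-event'' hedge.

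Second, you work harder than the paper at the final step. The paper simply invokes the product structure of $L$ to produce a point $o$ with $\theta_o(\mathcal{E}^c_o(R))\leq e^{-\alpha R}$; read literally this only yields an $R$-dependent $o$. Your Borel--Cantelli argument along integer times, plus the interpolation in $R$, does deliver a single $o$ valid for all large $R$, matching the lemma's statement exactly. This is more rigorous than the paper's one-line passage; for the downstream application (Lemma~\ref{lem- delta dm < 1-delta}) a basepoint varying with $R$ would in fact suffice, since on a cocompact quotient the $d_m$-volumes of balls of a given radius are uniformly comparable over a fundamental domain, but your version is the one that actually proves the lemma as stated.
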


\begin{proof}
Let us keep the notations of Lemma \ref{lem -clef}, $K=T^1\G\cdot B(x,\varepsilon)$ and we consider the following subset of $\mathcal{P}$
$$A := \{ \mu \in \mathcal{P} \, | \, \mu(K/\G) \leq L(K/\G)  -\kappa\}.$$
This set is \emph{not} closed for the weak topology. Its closure satisfies 
$$\overline{A} \subset \{ \mu \in \mathcal{P} \, | \, \mu(T^1\G \cdot B^{\circ} (x,\epsilon)/\G) \leq L(K/\G)  -\kappa\},$$
where $B^{\circ}(x,\epsilon)$ is the open ball. There might be equality between the two sets, but we won't use  it. \\
 However, since the unitary tangent bundle of the sphere $S(x,\epsilon)$ is transverse to the flow, we have : 
$$\{v\in T^1\Hyp^2/\G \, |\, \zeta_v^R \in A\} =\{v\in T^1\Hyp^2/\G \, |\, \zeta_v^R \in \overline{A}\}. $$
In other words, the measures $\zeta_v^R $ do not charge $T^1S(x,\epsilon)$.

Since $L\notin \overline{A}$ and $L$ is the unique measure of maximal entropy satisfying $h(L)=1$, we have
$$-\inf_{\mu \in \overline{A}} f(\mu) =-\alpha<0.$$
Besides, it is clear that the set $\mathcal{E}^c(R) = \{ v \in T^1 \Hyp^2 \, | \,\ \zeta^v_R (K)\leq   L(K/\G) - \kappa \}$  is $\G$-invariant from the  $\G$ invariance of $K$.   By definition and the previous remark we get 
\begin{eqnarray*}
\mathcal{E}^c(R)/\G & =& \left\{  v\in T^1\Hyp^2/\G \,| \,\, \zeta_v^R\in A  \right\}\\
								 & =& \left\{  v\in T^1\Hyp^2/\G \,| \,\, \zeta_v^R\in \overline{A}  \right\}.
\end{eqnarray*}
The Theorem \ref{th large deviation sur les vecteurs} says that there exists $R_0>0$ such that for all $R>R_0$ we have
$$L(\mathcal{E}^c(R)/\G) \leq e^{-\alpha R}.$$
The product structure of $L$ implies the existence of a point $o\in \Hyp^2/\G$ such that
$$\theta_o \left( \mathcal{E}_o^c(R)/\G) \right) \leq e^{-\alpha R}.$$
The Lemma follows, choosing any lift of $o$ in $\Hyp^2$. 
\end{proof}

We finish the proof of Theorem \ref{th - main fuchsian} with Lemma \ref{lem- delta dm < 1-delta}, which compare the critical exponent between $d_m$ and hyperbolic  distance. Lemmas \ref{lem - h(sigma)<delta(D,dm)} and  \ref{lem- delta dm < 1-delta}, conclude the proof. 
\begin{lemme}\label{lem- delta dm < 1-delta}
There exists $u>0$ such that 
$$\delta((\D,d_m)) \leq 1-u.$$
\end{lemme}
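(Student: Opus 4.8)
\textbf{Proof plan for Lemma \ref{lem- delta dm < 1-delta}.}

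The plan is to fix the base point $o\in\D$ provided by Lemma \ref{lem il existe o tel que theta(Eo (R) est petit} and estimate the cardinality of the $d_m$-orbit ball $\{\g\in\G\mid d_m(\g o,o)\le 2R\}$ directly from the inclusion in Lemma \ref{lem Bm(o,2R) est inclus dans des boules hyperboliques plus petites}. Since $\delta((\D,d_m))$ can be computed as the exponential growth rate of $\Card\{\g\in\G\mid d_m(\g o,o)\le 2R\}$, and since $\G o$ is a net in $\D$, it suffices to bound the $\G$-orbit points inside $B_m(o,2R)$. By that inclusion,
$$
\Card\{\g\in\G\mid \g o\in B_m(o,2R)\}\ \le\ \Card\{\g\in\G\mid \g o\in B_{\Hyp^2}(o,2R/C')\}\ +\ \Card\{\g\in\G\mid \g o\in B_{\Hyp^2}(o,2R)\cap\F^c_o(R)\}.
$$
The first term grows like $e^{(2R/C')}$ since $\delta(\G,\Hyp^2)=1$, which is already strictly subexponential compared with $e^{2R}$; the whole point is then to show the second (``bad'') term is negligible.

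First I would control the bad term by a volume/counting argument. Because $\G$ acts freely and properly on $\Hyp^2$ with compact quotient, the number of orbit points in any region is comparable (up to a multiplicative constant depending only on the injectivity radius) to the hyperbolic area of an $r_0$-neighbourhood of that region. So I need an upper bound on $\Leb$ of an $r_0$-neighbourhood of $B_{\Hyp^2}(o,2R)\cap\F^c_o(R)$. Here $\F^c_o(R)=\{\exp(tv)\mid t\in\R^+,\ v\in\mathcal E^c_o(R)\}$, i.e.\ the set swept out by geodesic rays from $o$ whose \emph{initial} direction lies in $\mathcal E^c_o(R)$. Passing to polar (geodesic) coordinates centred at $o$, the measure of $B_{\Hyp^2}(o,2R)\cap\F^c_o(R)$ is
$$
\int_{\mathcal E^c_o(R)}\int_0^{2R}\sinh(t)\,dt\,d\theta_o(v)\ \le\ C_1 e^{2R}\,\theta_o(\mathcal E^c_o(R))\ \le\ C_1 e^{2R}e^{-\alpha R},
$$
using Lemma \ref{lem il existe o tel que theta(Eo (R) est petit}. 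Hence the number of orbit points in the bad set is at most $C_2 e^{(2-\alpha)R}$, again strictly subexponential relative to $e^{2R}$ — but, more importantly, I should arrange the comparison so that the surviving rate is bounded by $\max\{2/C',\,2-\alpha\}<2$.

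Combining, $\Card\{\g\mid d_m(\g o,o)\le 2R\}\le C_3\bigl(e^{(2/C')R}+e^{(2-\alpha)R}\bigr)$, so
$$
\delta((\D,d_m))=\limsup_{R\to\infty}\frac{1}{2R}\log\Card\{\g\mid d_m(\g o,o)\le 2R\}\ \le\ \frac12\max\Bigl\{\tfrac{2}{C'},\,2-\alpha\Bigr\}<1,
$$
and one sets $u:=1-\tfrac12\max\{2/C',\,2-\alpha\}>0$. The main obstacle I anticipate is the orbit-counting step for the bad set: one must be careful that $\F^c_o(R)$ is defined by the \emph{initial} direction only, so a geodesic ray can re-enter ``good'' regions later, yet this does not matter because we only use the crude containment $\F^c_o(R)\subset\{\exp(tv):t\ge0,\ v\in\mathcal E^c_o(R)\}$ and integrate the full angular sector; and one must check the elementary fact that orbit-point counts in a domain are controlled by the volume of its $r_0$-thickening, with $r_0$ half the injectivity radius of $\Hyp^2/\G$, which follows since the balls $B(\g o,r_0)$ are disjoint and each has fixed volume. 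A secondary point is that $\delta((\D,d_m))$ is \emph{a priori} only a $\limsup$ over the continuous radius parameter; restricting to radii of the form $2R$ is harmless since $d_m\ge d_{g_0}$ gives a Lipschitz comparison controlling the count between consecutive integer radii.
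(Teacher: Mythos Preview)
Your approach is essentially the paper's: use the inclusion of Lemma~\ref{lem Bm(o,2R) est inclus dans des boules hyperboliques plus petites}, the trivial growth bound on the first piece, and the polar--coordinate volume bound together with Lemma~\ref{lem il existe o tel que theta(Eo (R) est petit} on the second. The only substantive difference is that the paper bounds the \emph{$g_0$--volume} of $B_m(o,2R)$ (and then invokes volume entropy $=$ critical exponent for the cocompact action, which is immediate since $d_{g_0}\le d_m\le \cosh(\eta)\,d_{g_0}$), whereas you count orbit points directly.

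That detour introduces a gap. You correctly state that the number of orbit points in a region $A$ is controlled by $\vol_{g_0}(N_{r_0}(A))$, but you then compute only $\vol_{g_0}(A)$ for $A=B_{\Hyp^2}(o,2R)\cap\F^c_o(R)$. For a cone over $\mathcal E^c_o(R)$ these two quantities need not be comparable: Lemma~\ref{lem il existe o tel que theta(Eo (R) est petit} bounds the \emph{measure} of $\mathcal E^c_o(R)$, not its number of connected components, and each component of the cone contributes an $r_0$--tube of area linear in $R$ to the thickening. So your bound $\vol(A)\le C_1 e^{(2-\alpha)R}$ does not, as written, give $\vol(N_{r_0}(A))\le C' e^{(2-\alpha)R}$. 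The cleanest fix is exactly the paper's: drop the orbit--counting step, bound $\vol_{g_0}(B_m(o,2R))$ by the sum of the two volumes (your polar integral is then all you need), and conclude via the equality of volume entropy and critical exponent.
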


\begin{proof}
We are going to show that the volume entropy of $(\D,d_m)$ satisfies the inequality, that would imply the similar result on critical exponent. 

Let $o\in \D$ be a point satisfying Lemma \ref{lem il existe o tel que theta(Eo (R) est petit}. From Lemma \ref{lem Bm(o,2R) est inclus dans des boules hyperboliques plus petites}, we have
$$B_m(o,2R) \subset B_{\Hyp^2}(o,\frac{2R}{C'}) \cup  \Big( B_{\Hyp^2}(o,2R) \cap \mathcal{F}^c_o(R)\Big).$$
On one hand we have the classical upper bound   $\vol\left(B_{\Hyp^2}(o,\frac{2R}{C'})\right)=O(e^{2R/C'})$. On the other hand the volume form on  $\Hyp^2$ can be written in polar coordinates as $\sinh(r)drd\theta$, hence for all $R>R_0$ we get 
\begin{eqnarray*}
\vol\Big( B_{\Hyp^2}(o,2R) \cap \mathcal{F}^c_o(R)\Big) &= &\int_0^{2R} \int_{\mathcal{E}^c_o(R) } \sinh(r)d\theta dr\\
																						&\leq & \int_0^{2R} e^{-\alpha R} e^r dr\\
																						&\leq & e^{(2-\alpha)R}.
\end{eqnarray*}

Let $u >0$, defined by  $1-u= \max ( \frac{1}{C'}, (1-\alpha/2)) <1$. The last two upper bounds give
\begin{eqnarray*}
\vol(B_m(o,2R) ) &=&O(e^{2R/C'}) + O(e^{(2-\alpha)R})\\
							&=& O(e^{2(1-u)R})
\end{eqnarray*}
We finish by taking the $\log$ and the limit. 
\end{proof}

\section{Quasi-Fuchsian case}
\subsection{Geodesic intersection }\label{sec - geodesic intersection}
 Let $\Sigma$  be an incompressible surface in $M$. We designed by  $\phi_t^{\Hyp^3}$, $\phi_t^{\Sigma}$  the geodesic flows on the unitary tangent spaces  $T^1\Hyp^3$, $T^1\Sigma$ respectively. We named $\pi $ the projection from $T^1 \Hyp^3 $ to $\Hyp^3$.   The restriction of  $\pi$  to  $T^1 {\Sigma} $ will still be denoted by $\pi$.  There is two distances we can consider on ${\Sigma}$. The intrinsic one, defined as the infinimum of the length of curves staying on ${\Sigma}$ and the extrinsic one, where we take the distance in $\Hyp^3$. We will denote $d_\Sigma$ and $d$ this two distances.

First of all let us remark that there is no riemanniann metric on $\Sigma$ which induces $d$.  If such a metric existed, our Theorem \ref{th - main qf} would be a particular case of \cite{knieper1995volume}.
\begin{proposition}
If $\Sigma$ is not totally geodesic, there is no riemannian metric on $\Sigma$ which induces $d$. 
\end{proposition}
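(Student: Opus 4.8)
The plan is to argue by contradiction: suppose a Riemannian metric $g$ on $\Sigma$ induces the extrinsic distance $d$. The first observation is that $g$ must then be locally isometric, near \emph{every} point, to a submanifold of $\Hyp^3$ in a way compatible with the ambient embedding; more precisely, since $d_\Sigma \geq d$ always and $d$ is assumed to come from $g$ while the induced (intrinsic) metric $g_\Sigma$ gives $d_\Sigma$, we get $g \leq g_\Sigma$ as quadratic forms on each tangent space $T_x\Sigma$ (lengths of curves, hence of tangent vectors, can only shrink). So $g$ and $g_\Sigma$ are two metrics with $g\le g_\Sigma$ inducing distances $d\le d_\Sigma$.

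Next I would extract rigidity from the ambient geometry. For $x\in\Sigma$ and a tangent direction $v$, consider the ambient geodesic $\g_v$ of $\Hyp^3$ with $\g_v(0)=x$, $\g_v'(0)=v$. For small $t$, $d(x,\g_v(t)) = t$, and one checks that $\g_v(t)$ need not lie on $\Sigma$, so this does not immediately help; instead one should test $d$ along curves \emph{inside} $\Sigma$. The key point: if $c$ is a unit-speed $g$-geodesic in $\Sigma$ emanating from $x$, then for small $t$ we must have $d(x,c(t)) = t(1+o(1))$ is replaced by the sharper requirement $d(x,c(t)) = $ the $g$-distance $=t$ exactly (for $t$ small, since $c$ is a minimizing $g$-geodesic and $g$ induces $d$). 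On the other hand $d(x,c(t))$ is the chordal (ambient $\Hyp^3$) distance between two nearby points of $\Sigma$, whose second-order expansion is governed by the second fundamental form $\mathrm{II}$ of $\Sigma$: one has $d(x,c(t))^2 = t^2 - \tfrac{1}{12}\big(\| \mathrm{II}(c'(0),c'(0))\|^2 + \kappa_{\Hyp^3}\,\text{(terms)}\big)t^4 + O(t^5)$, so $d(x,c(t)) = t - \tfrac{1}{24}\|\mathrm{II}(c'(0),c'(0))\|^2 t^3 + O(t^4)$. For this to equal $t$ to all orders (as it must, since $d$ is a \emph{length} metric coming from $g$ and $c$ is $g$-geodesic, so $d(x,c(t))=t$ for all small $t$), we need $\mathrm{II}\equiv 0$, i.e. $\Sigma$ totally geodesic, contradiction.

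The step I expect to be the main obstacle is making rigorous the claim that $d(x,c(t)) = t$ \emph{exactly} for small $t$, rather than just $d(x,c(t)) = t + o(t)$. This requires knowing that the $g$-geodesic $c$ is, for short times, the $d$-shortest path between its endpoints within $\Sigma$, and that $d$ restricted to a small neighborhood of $x$ in $\Sigma$ equals the intrinsic distance of $(\Sigma,g)$ — which is exactly the hypothesis that $g$ induces $d$, provided we interpret "induces" as "the length metric of $g$ equals $d$". Once that interpretation is fixed, the Taylor expansion of the chordal distance in terms of $\mathrm{II}$ (a standard computation in submanifold geometry, valid in any ambient manifold and in particular in $\Hyp^3$) closes the argument: the cubic coefficient $\|\mathrm{II}(v,v)\|^2$ must vanish for every unit $v$ at every point, forcing $\mathrm{II}\equiv 0$ and hence $\Sigma = \Hyp^2$. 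I would also remark that this is precisely why Theorem \ref{th - main qf} is not a formal consequence of \cite{knieper1995volume}: the extrinsic "distance" is genuinely not Riemannian unless $\Sigma$ is totally geodesic.
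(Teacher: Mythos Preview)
Your approach works, but there is a small gap, and the Taylor expansion is heavier machinery than needed. The expansion $d(x,c(t)) = t - \tfrac{1}{24}\|\mathrm{II}(c'(0),c'(0))\|^2 t^3 + \cdots$ (with leading term $t$ and cubic term governed by $\mathrm{II}$ alone) is valid for a curve that is unit-speed for the \emph{ambient} metric and is a $g_\Sigma$-geodesic; you invoke it for a unit-speed $g$-geodesic, having only established $g\le g_\Sigma$. The fix is immediate: comparing your exact equality $d(x,c(t)) = t$ with the first-order expansion $d(x,c(t)) = t\|c'(0)\|_{g_\Sigma} + O(t^2)$ already forces $\|c'(0)\|_{g_\Sigma} = 1$, hence $g = g_\Sigma$, and then your $g$-geodesic is a $g_\Sigma$-geodesic and the cubic-order argument goes through. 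But at that point the expansion is unnecessary: an ambient-unit-speed curve with $d(x,c(t)) = t$ realizes the ambient distance between its endpoints, hence is an $\Hyp^3$-geodesic, and $\mathrm{II}=0$ follows without any computation.

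This last observation is exactly the paper's proof, which bypasses both the comparison $g\le g_\Sigma$ and the Taylor expansion. If $g'$ induces $d$, then along any short minimizing $g'$-geodesic $c$ one has, for every intermediate parameter $s$,
\[
d(c(0),c(s)) + d(c(s),c(\epsilon)) = d_{g'}(c(0),c(s)) + d_{g'}(c(s),c(\epsilon)) = d_{g'}(c(0),c(\epsilon)) = d(c(0),c(\epsilon)).
\]
Equality in the triangle inequality in $\Hyp^3$ forces the three points to lie on a common $\Hyp^3$-geodesic, so $c$ itself is an $\Hyp^3$-geodesic segment; since this holds through every point and in every tangent direction, $\Sigma$ is totally geodesic. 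Your route reaches the same conclusion but detours through infinitesimal expansions where the paper uses only the strict convexity of $\Hyp^3$.
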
 

\begin{proof}
Assume there is such a riemannian metric, named $g'$. Let $\epsilon>0$ be such that the exponential map for $g'$ is an embedding at every point. Let $c_{g'} : [0,\epsilon] \tv {\Sigma}$ be  a minimizing geodesic for $g'$ on ${\Sigma}$, then 
for all $t\in [0,\epsilon],$ 
$$d_{g'}(c_{g'}(0),c_{g'}(t)) +d_{g'}(c_{g'}(t),c_{g'}(\epsilon)) =d_{g'}(c_{g'}(0),c_{g'}(\epsilon))$$
But since we suppose that ${g'}$ induces $d$ we have the same equality for $d$
$$d(c_{g'}(0),c_{g'}(t)) +d(c_{g'}(t),c_{g'}(\epsilon)) =d(c_{g'}(0),c_{g'}(\epsilon))$$
and this implies that $c_{g'}$ is a geodesic for $\Hyp^3$. Hence  every points of $\Sigma$  is included in a totally geodesic disc, therefore $\Sigma$ is totally geodesic. 
\end{proof}

Consider the  following function
$$\fonction{a}{T^1{\Sigma}  \times \R}{\R}{(v,t)}{d(\pi \phi_t^{{\Sigma}}(v), \pi(v))}$$
Let $t_1,t_2 \in \R$ and $v\in T^1{\Sigma}  $, we have by the triangle inequality, 
\begin{eqnarray*}
a(v,t_1+t_2) &=& d(\pi \phi_{t_1+t_2}^{{\Sigma}}(v), \pi(v)) \\
					&\leq & d(\pi \phi_{t_1+t_2}^{{\Sigma}}(v), \pi \phi_{t_1}^{{\Sigma}}(v)) + d(\pi \phi_{t_1}^{{\Sigma}}(v),\pi(v))\\
					&\leq & d(\pi \phi_{t_2}^{{\Sigma}}( \phi_{t_1}v), \pi\phi_{t_1}^{{\Sigma}}(v)) + d(\pi \phi_{t_1}^{{\Sigma}}(v),\pi(v))\\
					&\leq & a(\phi^{{\Sigma}}_{t_1}v, t_2) + a(v,t_1)
\end{eqnarray*}

hence $a$ is a subadditive cocycle for the geodesic flow $ \phi_t^{{\Sigma}}$. Since $a$ is $\G$ invariant  it defines a subbadittive cocycle on $T^1\Sigma$, still denoted by $a$. 

The following is a consequence of Kingman's subadditive ergodic theorem \cite{kingman1973subadditive}.
\begin{theorem}
Les $\mu$ be a $\phi_t^{\Sigma}$ invariant probability measure on $T^1\Sigma$. Then 
$$I_\mu(\Sigma, M, v) := \lim_{t\tv \infty} \frac{a(v,t)}{t}$$
exists for $\mu$ almost $v\in T^1 \Sigma $ and defines a $\mu$-integrable function  on $T^1\Sigma$, invariant under the geodesic flow and  we have : 
$$\int_{T^1\Sigma} I_\mu (\Sigma, M,v)d\mu = \lim_{t\tv \infty } \int_{T^1\Sigma}\frac{a(v,t)}{t} d\mu.$$
Moreover if $\mu$ is ergodic $I_\mu(\Sigma,M,v)$ is constant $\mu$-almost everywhere. In this case, we write $I_\mu(\Sigma,M)$.
\end{theorem}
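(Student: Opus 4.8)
\section*{Proof proposal}

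The plan is to apply Kingman's subadditive ergodic theorem to the cocycle $a$, whose subadditivity has just been verified. First I would record the elementary bounds ensuring the hypotheses of Kingman's theorem hold. Since the geodesic flow on $\Sigma$ moves at unit speed for the intrinsic metric and $d\leq d_\Sigma$, we have $0\leq a(v,t)\leq d_\Sigma(\pi\phi_t^{\Sigma}(v),\pi(v))\leq |t|$ for every $v\in T^1\Sigma$ and every $t$. In particular each $a(\cdot,t)$ is bounded, hence $\mu$-integrable for any probability measure $\mu$, and $\inf_{t>0}\frac1t\int a(\cdot,t)\,d\mu$ is finite (indeed $\geq 0$). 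Moreover, for $0\leq t\leq t'$ the triangle inequality together with $d\leq d_\Sigma$ gives $a(v,t)\leq d(\pi\phi_t^{\Sigma}(v),\pi\phi_{t'}^{\Sigma}(v))+a(v,t')\leq (t'-t)+a(v,t')$, while subadditivity gives $a(v,t')\leq a(v,t)+a(\phi_t^{\Sigma}v,t'-t)\leq a(v,t)+(t'-t)$; hence $|a(v,t)-a(v,t')|\leq |t-t'|$, so $t\mapsto a(v,t)$ is $1$-Lipschitz. Combined with the joint continuity of $a$ and its $\G$-invariance, this exhibits $a$ as a continuous subadditive cocycle over the geodesic flow $\phi_t^{\Sigma}$ on the compact space $T^1(\Sigma/\G)$.

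Next I would invoke Kingman's theorem in continuous time — or reduce to the discrete version by restricting to integer times and controlling the oscillation on each interval $[n,n+1]$ via the Lipschitz bound above. This produces the almost everywhere limit $I_\mu(\Sigma,M,v)=\lim_{t\to\infty}a(v,t)/t$, which satisfies $0\leq I_\mu\leq 1$ (hence is $\mu$-integrable), together with the identity
$$\int_{T^1\Sigma}I_\mu(\Sigma,M,v)\,d\mu=\inf_{t>0}\frac1t\int_{T^1\Sigma}a(v,t)\,d\mu=\lim_{t\to\infty}\frac1t\int_{T^1\Sigma}a(v,t)\,d\mu,$$
the last equality being Fekete's lemma applied to the subadditive sequence $n\mapsto\int a(\cdot,n)\,d\mu$, and then passing from integers to reals with the $1$-Lipschitz bound once more.

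It remains to check flow-invariance and the ergodic case. For fixed $s$, the identity $a(\phi_s^{\Sigma}v,t)=d(\pi\phi_{s+t}^{\Sigma}(v),\pi\phi_s^{\Sigma}(v))$ and the triangle inequality give $|a(\phi_s^{\Sigma}v,t)-a(v,s+t)|\leq d(\pi\phi_s^{\Sigma}(v),\pi(v))=a(v,s)\leq |s|$, a bound independent of $t$; dividing by $t$ and letting $t\to\infty$ yields $I_\mu(\Sigma,M,\phi_s^{\Sigma}v)=I_\mu(\Sigma,M,v)$ wherever the limit exists. Finally, if $\mu$ is ergodic, the flow-invariant $\mu$-measurable function $I_\mu(\Sigma,M,\cdot)$ is $\mu$-almost everywhere equal to a constant, which we denote $I_\mu(\Sigma,M)$.

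The only point requiring a little care is the continuous-to-discrete passage in Kingman's theorem and in Fekete's lemma; but the uniform Lipschitz estimate $|a(v,t)-a(v,t')|\leq|t-t'|$ reduces this to routine bookkeeping, so no genuine obstacle arises. Everything else is a direct consequence of the subadditive cocycle inequality already established before the statement.
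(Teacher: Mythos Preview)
Your proposal is correct and follows exactly the route the paper takes: the paper does not give a proof at all but simply records the theorem as ``a consequence of Kingman's subadditive ergodic theorem,'' and your argument is a careful verification of the hypotheses of that theorem together with the standard deductions (integrability from the bound $0\le a(v,t)\le t$, the continuous-time reduction via the Lipschitz estimate, flow-invariance, and constancy under ergodicity). There is nothing to compare; you have written out in detail what the paper leaves to the citation.
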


\subsection{Patterson Sullivan measures}
We called $\Lambda $ the limit set of $\G$ acting on $\Hyp^3$. 
Since $\G$ acts cocompactly on ${\Sigma}$, and on the convex core $C(\Lambda)$, the three geometric spaces $\G$ (seen as its Cayley graph), ${\Sigma}$ and $C(\Lambda)$ are quasi-isometric. We assume from now on that $(\Sigma,g)$ has negative curvature, hence there is a unique geodesic in each homotopy class of curves, and for every pair of points in ${\Sigma}$  there is a unique geodesic which joints them. Let $c_{{\Sigma}}$ be a geodesic on ${\Sigma}$, and denoted by  $c_{{\Sigma}} (\pm\infty)$ its limit points on $\Lambda$.  There is a unique $\Hyp^3$-geodesic $c_{\Hyp^3}$ whose endpoints are $c_{{\Sigma}}(\pm\infty)$. Since ${\Sigma}$ is quasi-isometric to $C(\Lambda)$, the two geodesics $c_{\Hyp^3}$ and $c_{{\Sigma}}$ are at bounded distance. 

Let $p\in {\Sigma}$ and call $pr_p^{{\Sigma}}$ the projection from ${\Sigma}$ to $\Lambda$ defined as follows. For any point $x \in {\Sigma}$ call $c_{p,x}^{{\Sigma}}$ the geodesic on ${\Sigma}$  which joint $p$ to $x$, then
$$pr_p^{{\Sigma}}(x) = c_{p,x} ^{{\Sigma}}(+\infty).$$

We will denote the equivalent projection in $\Hyp^3$ by $pr_p^{\Hyp^3}$. There is two small distinctions to notice between  $pr_p^{\Hyp^3}$ and $pr_p^{{\Sigma}}$. First $pr_p^{\Hyp^3} $ is defined for every points in $\Hyp^3$, whereas $pr_p^{{\Sigma}}$ is only defined for points in ${\Sigma}$. Second is that the codomain of $pr_p^{{\Sigma}}$ is exactly $\Lambda$ whereas the codomain of $pr_p^{\Hyp^3}$ is all $S^2$. 

As we just said, for all $\xi\in \Lambda$ the geodesics, $c_{p,\xi}^{{\Sigma}}$ and $c_{p,\xi}^{\Hyp^3}$ are at bounded distance, and this bound depends only on the quasi-isometry between ${\Sigma}$ and $C(\Lambda)$. There exists $C_1$ such that  for all $\xi \in \Lambda$ the  Hausdorff distance between geodesics $c_{p,\xi}^{{\Sigma}}$ and $c_{p,\xi}^{\Hyp^3}$ is less than $C_1$. \\
 Let $x \in {\Sigma}$, $R>0$ and consider the ball $B_{\Hyp^3}(x,R)$ in $\Hyp^3$ of center $x$ and radius $R$. Now take  $\xi \in pr_p^{\Hyp^3} (B(x,R-C_1) ) \cap \Lambda$, this means that the $\Hyp^3$-geodesic from $p$ to $\xi$ cross the ball  $B_{\Hyp^3}(x,R-C_1)$. This $\Hyp^3$-geodesic is at bounded distance $C_1$  of  the ${\Sigma}$-geodesic joining $p$ to $\xi$. Hence $c_{p,\xi}^{{\Sigma}}\cap \left( B_{\Hyp^3}(x,R) \cap {\Sigma } \right) \neq \emptyset$, this proves that $\xi \in pr_p^{{\Sigma}} (B_{\Hyp^3}(x,R) \cap {\Sigma })$. 

The same argument shows that 
$$pr_p^{{\Sigma}} (B_{\Hyp^3}(x,R) \cap {\Sigma }) \subset pr_p^{\Hyp^3} (B_{\Hyp^3}(x,R+C_1) ) \cap \Lambda \subset pr_p^{\Hyp^3} (B_{\Hyp^3}(x,R+C_1) ).$$

The distance on $\Sigma$  and on $\Hyp^3$ are locally equivalent: for every $R>0$ there exists $C_2$ such that all balls satisfy  the following
$$B_{ {\Sigma }} (x,R-C_2) \subset B_{\Hyp^3} (x,R) \cap \Sigma \subset B_{ {\Sigma }} (x,R+C_2)$$

Set $C =\max(C_1,C_2)$ we then have 

\begin{theorem}\label{comparaison des boules}
$$\begin{array}{ccccc}
& & pr_p^{ {\Sigma }} (B_{ {\Sigma }} (x,R-C) ) & & \\
& & \cap & & \\
pr_p^{ \Hyp^3} (B_{ \Hyp^3} (x,R-C) ) \cap \Lambda & \subset & pr_p^{ {\Sigma }} (B_{\Hyp^3} (x,R)  \cap {\Sigma } ) &  \subset &   pr_p^{ \Hyp^3} (B_{ \Hyp^3} (x,R+C) ) \\
& & \cap & & \\
 & & pr_p^{ {\Sigma }} (B_{ {\Sigma }}(x,R+C) ) &  &   \\
\end{array}
$$
\end{theorem}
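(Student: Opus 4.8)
The statement is a purely combinatorial/metric consequence of two facts already established in the excerpt: the comparison between the projections $pr_p^\Sigma$ and $pr_p^{\Hyp^3}$ via the Hausdorff bound $C_1$, and the local equivalence between the intrinsic and extrinsic distances via the constant $C_2$. So the plan is simply to assemble the four inclusions making up the diagram from these two ingredients and the monotonicity of all the maps involved.

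First I would record the two one-line inclusions that have already been derived in the running text, namely
$$pr_p^{\Hyp^3}(B_{\Hyp^3}(x,R-C_1))\cap\Lambda \;\subset\; pr_p^{\Sigma}(B_{\Hyp^3}(x,R)\cap\Sigma)\;\subset\; pr_p^{\Hyp^3}(B_{\Hyp^3}(x,R+C_1)),$$
which gives the left and right inclusions of the diagram after replacing $C_1$ by $C=\max(C_1,C_2)$ and using that enlarging the radius only enlarges the set (monotonicity of $B_{\Hyp^3}(x,\cdot)$ and of $pr_p^{\Hyp^3}$). For the top and bottom inclusions I would apply $pr_p^\Sigma$ to the chain $B_\Sigma(x,R-C_2)\subset B_{\Hyp^3}(x,R)\cap\Sigma\subset B_\Sigma(x,R+C_2)$: applying the (monotone) map $pr_p^\Sigma$ preserves the inclusions, yielding $pr_p^\Sigma(B_\Sigma(x,R-C_2))\subset pr_p^\Sigma(B_{\Hyp^3}(x,R)\cap\Sigma)\subset pr_p^\Sigma(B_\Sigma(x,R+C_2))$, and again replacing $C_2$ by $C$ only weakens these (shrinking the lower radius, enlarging the upper one). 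Combining the two pairs of inclusions and recalling that all four outer sets are automatically contained in $\Lambda$ (since $pr_p^\Sigma$ has codomain exactly $\Lambda$, and the $\Hyp^3$-set on the left was intersected with $\Lambda$ by construction) gives exactly the displayed diagram centered on $pr_p^\Sigma(B_{\Hyp^3}(x,R)\cap\Sigma)$.

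The only subtlety to be careful about — and what I would flag as the "main obstacle," though it is minor — is that the constants $C_1$ and $C_2$ a priori depend on $R$ (the local equivalence of distances was stated with $C_2=C_2(R)$), so one should either fix the radius $R$ throughout and treat $C=C(R)$ as depending on it, or remark that for the later use (comparing growth rates of balls, i.e.\ critical exponents) one only needs $C$ to grow sublinearly in $R$, which follows from the fact that $\Sigma$ and $C(\Lambda)$ are quasi-isometric with uniform constants so the Hausdorff distance $C_1$ between corresponding geodesics is in fact a genuine constant, and $C_2$ can likewise be taken uniform because $\Sigma/\G$ is compact. I would state the theorem with $C=\max(C_1,C_2)$ understood as this uniform constant and note that the chain of inclusions is then valid for all $R>0$ and all $x\in\Sigma$ simultaneously, which is what is needed when this comparison is fed into the Patterson–Sullivan argument in the next subsection.
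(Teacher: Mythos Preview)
Your proposal is correct and follows exactly the paper's approach: the paper derives the left/right inclusions via the Hausdorff bound $C_1$ between $\Sigma$-geodesics and $\Hyp^3$-geodesics, the top/bottom inclusions by applying $pr_p^\Sigma$ to the ball comparison $B_\Sigma(x,R-C_2)\subset B_{\Hyp^3}(x,R)\cap\Sigma\subset B_\Sigma(x,R+C_2)$, and then sets $C=\max(C_1,C_2)$---all of this in the running text immediately preceding the statement, which is presented without a separate proof. Your remark about the uniformity of $C_2$ in $R$ is a useful clarification the paper leaves implicit.
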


Before proving Theorem \ref{th - main qf}, we will recall some basic facts about Patterson-Sullivan measure. Some classical references for this are the papers of Patterson and Sullivan themselves, \cite{patterson1976limit} and \cite{sullivan1979density}, the lecture of J-F. Quint \cite{quint2006overview} and the monograph of T. Roblin \cite{roblin2003ergodicite}. 
Let $(X,g)$ be a simply connected manifolds with negative curvature and   $X(\infty)$ its geometric  boundary. If $\G$ is a discrete group acting on $(X,g)$  we can associated a family of measures $\{\mu_p^g\}_{p\in X}$ on $X(\infty)$ constructed as follows. 
Let $x,y$ two points of $X$ and consider the Poincaré series:
$$P(s) := \sum_{\g\in \G} e^{-sd(\g x,y)} .$$
The convergence of $P(s)$ is independent of $x$ and $y$ by the triangle inequality. It converges for $s> \delta(\G)$ and diverges for $s<\delta(\G)$. If the action is cocompact, $\delta  (\G) = h(g)$ and the series diverges at $h(g)$.  
Then we define the probability measure 
$$\mu^g_{p,x} (s) := \frac{ \sum_{\g\in \G} e^{-sd(\g x,p)} \delta_{\g x}}{ \sum_{\g\in \G} e^{-sd(\g p,p)} }.$$
By compactness of the set of probability measures on $X(\infty)$, we obtain a measure on $X(\infty)$ by taking a weak limit of a sequence $\mu^g_{p,x} (s_n)$\footnote{It is a classical result of Sullivan that there is in fact a unique limit, up to normalization. It is equivalen tto the ergodicity of Bowen-Margulis measure \cite[Chapter 1]{roblin2003ergodicite}}
$$\mu^g_p := \lim_{s_n \tv  h(g)} \mu^g_p(s_n).$$
 It is supported on the accumulation points of $G$, that is to say the limit set. 
 
 These measures called \emph{Patterson-Sullivan measures} have the following properties. They  are quasi-conformal, ie. for all $p\in X$ and all $\xi, \eta\in \Lambda$, we have: 
 $$\frac{d\mu_p^g }{d \mu_q^g} (\xi) = e^{-h(g) \beta_\xi(p,q)},$$
 where $\beta_\xi(p,q) = \lim_{z\tv \xi} d_g(p,z) - d_g(q,z).$

They are also  $\G$-equivariant, ie. for all $\g \in \G$, and all $p\in X$, we have: 
$$\mu_p^g \circ \g = \mu_{\g ^{-1} p } ^g.$$

Moreover we know  these measures behave locally like $h(g)-$Hausdorff measures.  See \cite[Lemma 4.10]{quint2006overview} for example. 
\begin{lemme}[Shadow's lemma]\label{lemme de l'ombre}
For $R>0$ sufficiently large,  there exists $c>1$ such that  for all $x \in X$ 
$$\frac{1}{c} e^{-h(g) d_g(x,p)}  \leq \mu_p^g (pr_p^g (B_g(x,R)))\leq c e^{-h(g) d_g(x,p)}.$$
\end{lemme}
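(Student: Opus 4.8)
The plan is to prove the Shadow Lemma by the standard ping-pong/doubling argument, adapted to the setting where $\G$ acts cocompactly on the negatively curved space $(X,g)$, so that $\delta(\G) = h(g)$ and the Poincar\'e series diverges at the critical exponent. I would work with the measure $\mu_p^g$ obtained as a weak limit of $\mu_p^g(s_n)$ as $s_n \downarrow h(g)$; the key structural facts I will use are the quasi-conformality relation $\frac{d\mu_p^g}{d\mu_q^g}(\xi) = e^{-h(g)\beta_\xi(p,q)}$ and the $\G$-equivariance $\mu_p^g \circ \g = \mu_{\g^{-1}p}^g$, together with the fact that $\G$ acts cocompactly (this gives a uniform bound $D$ on the diameter of a fundamental domain and uniform control over the geometry).

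\medskip\noindent\textbf{Upper bound.} First I would pick $x \in X$ and $\g \in \G$ with $d_g(\g p, x)$ bounded by the codiameter $D$ of the action (possible by cocompactness), and reduce to estimating $\mu_p^g(pr_p^g(B_g(\g p, R+D)))$. Using equivariance, $\mu_p^g(pr_p^g(B_g(\g p, R'))) = \mu_{\g^{-1}p}^g(\g^{-1}pr_p^g(B_g(\g p, R'))) = \mu_{\g^{-1}p}^g(pr_{\g^{-1}p}^g(B_g(p, R')))$, so it suffices to estimate the mass, measured from an arbitrary basepoint $q$, of the shadow cast \emph{from $p$} by a ball $B_g(p, R')$ centered at $p$ itself; call this shadow $\mathcal{O}$. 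The point is that any $\xi \in \mathcal{O}$ has $\beta_\xi(q, p) \geq d_g(p,q) - 2R'$ (the geodesic from $q$ to $\xi$ passes within $R'$ of $p$), so on $\mathcal{O}$ the Radon--Nikodym derivative $\frac{d\mu_q^g}{d\mu_p^g}(\xi) = e^{-h(g)\beta_\xi(p,q)} \leq e^{h(g)(2R' - d_g(p,q))}$; integrating over $\mathcal{O}$ and using $\mu_p^g(\mathcal{O}) \leq \mu_p^g(X(\infty)) = 1$ gives $\mu_q^g(\mathcal{O}) \leq e^{2R'h(g)} e^{-h(g)d_g(p,q)}$, which (after tracking the $D$'s back through the reduction and setting $q = p$, $\g$ as above) yields the upper bound with $c = e^{Ch(g)}$ for a suitable constant $C = C(R,D)$.

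\medskip\noindent\textbf{Lower bound.} This is the part I expect to be the main obstacle, since it genuinely needs the divergence of the Poincar\'e series at $s = h(g)$ and a non-concentration (shadow-covering) argument. The idea: fix a lift of a fundamental domain; by cocompactness, for $R$ large enough, every $\g \in \G$ with $\g p$ in a bounded shell around the geodesic ray toward a given direction contributes its Dirac mass $e^{-sd(\g x,p)}\delta_{\g x}$ to a point lying inside the shadow $pr_p^g(B_g(x,R))$; more precisely one shows that the shadows of balls $B_g(\g p, R_0)$, as $\g$ ranges over suitable elements, cover $X(\infty)$ with bounded multiplicity once $R_0$ is large. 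Combining the covering with bounded multiplicity and the upper bound already proven, a summation argument over the approximating measures $\mu_p^g(s_n)$ — using that $\sum_\g e^{-s_n d(\g p, p)}$ diverges as $s_n \to h(g)$, so no single shadow can carry vanishing relative mass — forces $\mu_p^g(pr_p^g(B_g(x,R))) \geq \frac{1}{c}e^{-h(g)d_g(x,p)}$. Concretely I would: (i) show that for $R$ large there is $\g$ with $d_g(\g p, x) \leq D$ and such that $pr_p^g(B_g(x,R)) \supset pr_p^g(B_g(\g p, R-D))$ contains the shadow of a fixed-size ball; (ii) apply the upper bound to the \emph{complement} shadows around all the other lattice points at comparable distance, bounding their total mass; (iii) conclude that the remaining mass, which must be at least a fixed fraction by the divergence/Sullivan's shadow argument (see \cite[Lemma 4.10]{quint2006overview}), lies in our target shadow. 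The delicate point throughout is the uniformity of all constants in $x$, which is exactly what cocompactness buys us; without the curvature and cocompactness hypotheses the argument would fail, and since by assumption $(\Sigma,g)$ has negative curvature and $\G$ acts cocompactly on it, the hypotheses of the classical statement are met and the proof goes through verbatim as in the cited references.
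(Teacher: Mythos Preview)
The paper does not prove the Shadow Lemma at all; it simply states it and cites \cite[Lemma~4.10]{quint2006overview} as a reference. Your proposal therefore goes beyond what the paper does, supplying (a sketch of) the classical Sullivan argument that the paper merely invokes.

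A couple of remarks on your sketch itself. In the upper bound, your Radon--Nikodym formula has a sign slip: with the paper's convention $\frac{d\mu_p^g}{d\mu_q^g}(\xi)=e^{-h(g)\beta_\xi(p,q)}$ one gets $\frac{d\mu_q^g}{d\mu_p^g}(\xi)=e^{h(g)\beta_\xi(p,q)}$, not $e^{-h(g)\beta_\xi(p,q)}$; the final inequality you write is nonetheless correct once this is fixed. For the lower bound, your steps (ii)--(iii) are not quite the standard route: the usual argument does not compare to ``other lattice points at comparable distance'' but instead reduces (via equivariance, as you do in the upper bound) to showing that $\mu_p^g\bigl(pr_q^g(B_g(p,R'))\bigr)$ is bounded below uniformly over $q\in\G\cdot p$, which follows because the complement of that shadow shrinks to a single boundary point as $R'\to\infty$ and $\mu_p^g$ has no atoms (a consequence of divergence at $h(g)$). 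Since you ultimately defer to the cited reference anyway, this is harmless, but if you want a self-contained write-up you should replace the covering/multiplicity heuristic by the no-atom argument.
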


Suppose that $X/\G$ is compact, from Patterson-Sullivan measure, we can construct an invariant measure on $T^1X/\G$. Let $\Lambda^{(2)}$ be $\Lambda \times \Lambda \setminus diagonal$, there is a natural identification of $\Lambda^{(2)}\times \R $ and $T^1X$, a vector $v\in T^1 X$ is identified  with $(c_v (+\infty) , c_v (-\infty) , \beta_{c_v (+\infty) } (p,\pi v))$. The Bowen-Margulis measure is defined by 
$$d\mu_{BM} (\xi, \eta , t) =  e^{2h(g) \scal{\xi}{\eta}_p} d \mu_p^g(\xi) d \mu_p^g(\eta) dt $$
where $  \scal{\xi}{\eta}_p $ is the Gromov product:
$$\scal{\xi}{\eta}_p = \frac{1}{2} \left(  \beta_\xi(z,p)+  \beta_\eta(z,p) \right),$$ 
where $z$ is any point on the geodesic $(\xi,\eta)$. 

Let us recall the classical fact that the measure $\mu_{BM}$ is $\G-$invariant and define therefore a measure on $T^1X/\G$. Let $z\in (\xi, \eta)$, we have:
\begin{eqnarray*}
\scal{\g \xi}{\g \eta}_p & = &\frac{1}{2} \left(  \beta_{\g\xi}(\g z,p)+  \beta_{\g \eta}(\g z,p) \right)\\
									&=&\frac{1}{2} \left(  \beta_{\g\xi}(\g z, \g p)+ \beta_{\g\xi}(\g p,  p) +  \beta_{\g \eta}(\g z,\g p) +\beta_{\g\eta}(\g p, p) \right)\\
									&=&\frac{1}{2} \left(  \beta_{\xi}( z,  p)+ \beta_{ \eta}(z, p)+\beta_{\g\xi}(\g p,  p) +  \beta_{\g\eta}(\g p, p) \right)\\
									&=&\scal{\xi}{\eta}_p + \frac{1}{2} \left(  \beta_{\g\xi}(\g p,  p) +  \beta_{\g\eta}(\g p, p) \right).
\end{eqnarray*} 
By the quasi-conformal behaviour of $\mu_p^g$, we have:
\begin{eqnarray*}
e^{2h(g) \scal{\g \xi}{\g \eta}_p} d \mu_p^g(\g \xi) d \mu_p^g(\g \eta) & =&  e^{2h(g) \scal{\xi}{\eta}_p} e^{h(g)   \beta_{\g\xi}(\g p,  p))}d \mu_p^g(\g \xi) e^{h(g)   \beta_{\g\eta}(\g p,  p))} d \mu_p^g(\g \eta) \\
									&=& e^{2h(g) \scal{\xi}{\eta}_p} d \mu_p^g(\xi) d \mu_p^g(\eta).
\end{eqnarray*}

The invariance by the geodesic flow is clear by definition and it  is shown in \cite{nicholls1989ergodic} that $\mu_{BM}$ is ergodic. 

Finally we will need the following theorem, which is classical for compact manifolds endowed with two differents negatively curved metrics. Since we treat a case slightly different we give the proof. 

\begin{theorem}\label{L'eaglite des PS implique proportionalite des psectres}
If $\mu_p^{{\Sigma}} $ and $\mu_p^{\Hyp^3} $ are equivalent, then the marked length spectrum of $\Sigma$ is proportional to the marked length spectrum of $M$. 
\end{theorem}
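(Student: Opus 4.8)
The plan is to use the Bowen--Margulis measure and the Shadow Lemma to transfer an equivalence of Patterson--Sullivan measures into an equality of the two cocycles $d$ and $d_\Sigma$ along almost every geodesic, and then to promote this to the marked length spectrum by a closed-geodesic / ergodicity argument. Concretely, I would first observe that the identity map $\mathrm{id}\colon C(\Lambda)\to\Sigma$ (equivalently, the quasi-isometry comparing the two metrics) induces a $\G$-equivariant homeomorphism on the common boundary $\Lambda$, so that both families $\{\mu_p^{\Hyp^3}\}$ and $\{\mu_p^{\Sigma}\}$ live on the same space $\Lambda$, and by hypothesis $\mu_p^{\Sigma}\sim\mu_p^{\Hyp^3}$. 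Writing $\psi:=\tfrac{d\mu_p^{\Sigma}}{d\mu_p^{\Hyp^3}}$, the two quasi-conformal relations force, for $\mu_p^{\Hyp^3}$-a.e. $\xi$ and all $p,q$,
\begin{equation*}
h(\Hyp^3)\,\beta^{\Hyp^3}_\xi(p,q)-h(\Sigma)\,\beta^{\Sigma}_\xi(p,q)=\log\psi_q(\xi)-\log\psi_p(\xi),
\end{equation*}
i.e. the Busemann cocycles $h(\Hyp^3)\beta^{\Hyp^3}$ and $h(\Sigma)\beta^{\Sigma}$ differ by a coboundary on $\Lambda$. This is the analytic heart of the argument.

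Next I would pass to $T^1\Sigma/\G$. Using the identification $\Lambda^{(2)}\times\R\cong T^1\Sigma$ and the Shadow Lemma (Lemma \ref{lemme de l'ombre}), together with Theorem \ref{comparaison des boules} which says the two systems of shadows of balls are interleaved up to the additive constant $C$, one sees that the Bowen--Margulis measures $\mu_{BM}^{\Hyp^3}$ and $\mu_{BM}^{\Sigma}$ (viewed on the same underlying flow space) are mutually absolutely continuous; both are $\phi_t^{\Sigma}$-invariant and ergodic, hence they are in fact proportional, and the Radon--Nikodym density is flow-invariant. Feeding the coboundary relation above into the definition of $\mu_{BM}$ then shows that the function $v\mapsto \log\psi_{\pi v}(c_v(+\infty))$, call it $F$, satisfies $F(\phi_t^{\Sigma}v)-F(v)=h(\Hyp^3)\,t_{\Hyp^3}(v,t)-h(\Sigma)\,t$ where, up to the bounded error coming from the $C_1$-fellow-travelling of $c_v^{\Sigma}$ and $c_v^{\Hyp^3}$, $t_{\Hyp^3}(v,t)$ is the $\Hyp^3$-length of the arc; in other words the intrinsic and extrinsic length cocycles are cohomologous up to the scalar $h(\Sigma)/h(\Hyp^3)$ via a coboundary that is $\mu_{BM}$-a.e. finite.

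Finally I would specialise to periodic orbits. For $\g\in\G$ the axis of $\g$ in $\Sigma$ projects to a closed $\phi_t^{\Sigma}$-orbit of period $\ell_\Sigma(\g)$; along it the coboundary $F$ returns to its value, so integrating the cohomology relation around the loop kills the coboundary term and leaves $h(\Hyp^3)\,\ell_{\Hyp^3}(\g)=h(\Sigma)\,\ell_\Sigma(\g)$, i.e. the marked length spectra are proportional with ratio $h(\Sigma)/h(\Hyp^3)$. The one point needing care is that the density $\psi$, a priori only measurable, must be made to behave well on the (measure-zero) set of periodic orbits: I would handle this by a Livšic-type regularity argument — since the coboundary relation holds $\mu_{BM}$-a.e. and both cocycles are Hölder along the flow on the quasi-isometrically embedded $C(\Lambda)$, $F$ agrees a.e. with a continuous function, and continuity is enough to evaluate on closed orbits. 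I expect this regularity/Livšic step to be the main obstacle, since $\Sigma$ is only assumed negatively curved (no pinching, no analyticity), so one must argue with the abstract hyperbolicity of the geodesic flow rather than with explicit formulas; the comparison of balls in Theorem \ref{comparaison des boules} and the ergodicity of $\mu_{BM}$ are exactly what make this step go through.
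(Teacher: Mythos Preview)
Your overall strategy---the Busemann--cocycle cohomology relation plus a Liv\v{s}ic-type regularity step to evaluate on closed orbits---is a legitimate route, but it is not the one the paper takes, and it is noticeably heavier.

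The paper proceeds in two short lemmas. First it forms the Bowen--Margulis \emph{currents} on $\Lambda^{(2)}$ (not on $T^1\Sigma$): $\nu_\Sigma=D_\Sigma^{-2\delta(\Sigma)}\,d\mu_p^\Sigma\otimes d\mu_p^\Sigma$ and similarly $\nu_{\Hyp^3}$. Both are $\G$-invariant; the hypothesis makes them equivalent; ergodicity of the $\G$-action on $\Lambda^{(2)}$ then gives $\nu_\Sigma=c\,\nu_{\Hyp^3}$, i.e.
\[
f(\xi)f(\eta)\,D_{\Hyp^3}^{\,\delta(\Hyp^3)}(\xi,\eta)=c\,D_\Sigma^{\,\delta(\Sigma)}(\xi,\eta),\qquad f=\tfrac{d\mu_p^\Sigma}{d\mu_p^{\Hyp^3}}.
\]
Here the regularity you worry about comes for free: fixing $\eta$, this equation \emph{displays} $f(\xi)$ as a continuous function of $\xi$, hence $f$ is bounded away from $0$ and $\infty$, and the Gromov distances are H\"older equivalent. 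Second, the paper reads off the marked length spectrum from the boundary cross-ratio via $\lim_{n\to\infty}\tfrac{1}{n}\log[g^-,g^+,g^n\xi,\xi]=\ell(g)$; H\"older equivalence of $D_\Sigma$ and $D_{\Hyp^3}$ immediately yields $\ell_\Sigma(g)=r\,\ell_{\Hyp^3}(g)$.

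Compared with this, two points in your proposal deserve caution. First, your sentence ``both $\mu_{BM}^{\Hyp^3}$ and $\mu_{BM}^{\Sigma}$ \dots are $\phi_t^\Sigma$-invariant'' is not correct as stated: $\mu_{BM}^{\Hyp^3}$ is invariant under the $\Hyp^3$-flow, not the $\Sigma$-flow, and there is no canonical orbit equivalence identifying them as \emph{measures} on $T^1\Sigma$; the clean common object is exactly the geodesic current on $\Lambda^{(2)}$, which is what the paper uses. Second, your Liv\v{s}ic step---upgrading an a.e.\ coboundary identity to closed orbits---is doable but is precisely the work the paper sidesteps: by passing to $\Lambda^{(2)}$ and Gromov distances, continuity of the density and the evaluation on periodic points (via cross-ratios) are obtained in one algebraic stroke, with no appeal to hyperbolic-dynamics regularity theory.
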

Remark that in the Fuchsian case, any surface equidistant to the totally geodesic one has a metric proportional to $\Hyp^2$ and therefore satisfies the hypothesis of the Theorem. It seems likely that it is the only case where the length spectrum is proportional to the one of the ambiant manifold, however it is still unknown.

\begin{definition}\label{def - gromov distance}
For all $\xi,\eta \in \partial X^{(2)}$, we define the function $D_X$ by: 
$$D_X(\xi, \eta) =\exp(-\scal{\xi}{\eta}_p).$$
\end{definition}
It is shown in \cite{ghys1990espaces} that $D_X^a$ for $a>0$ small enough is a distance, called Gromov distance.  However, we do not need here such renormalisation. 


The proof of Theorem \ref{L'eaglite des PS implique proportionalite des psectres} is in two steps. The first one we prove that if the Patterson Sullivan measures are equivalent then the functions $D_\Sigma$ and $D_{\Hyp^3} $ are Hölder equivalent. The second one we prove that this last condition implies the proportionality of the length spectrum. 

\begin{lemme}\label{lem measure equivalent implies gromov distance holder}
If $\mu_p^{{\Sigma}} $ and $\mu_p^{\Hyp^3} $ are equivalent, then the $D_{\Hyp^3}$ and $D_{\Sigma}$ are Hölder equivalent. 
\end{lemme}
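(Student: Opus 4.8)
The plan is to show that the Radon–Nikodym derivative $\frac{d\mu_p^{\Sigma}}{d\mu_p^{\Hyp^3}}$ is essentially bounded above and below, and then transfer this two-sided bound to a comparison of the Patterson–Sullivan measures of shadows, finally invoking the Shadow's Lemma (Lemma \ref{lemme de l'ombre}) for both metrics to extract a comparison of the Busemann-type exponents $h(\Sigma)\beta^{\Sigma}$ and $h(\Hyp^3)\beta^{\Hyp^3}$, which will then be repackaged as a Hölder comparison of the Gromov products defining $D_\Sigma$ and $D_{\Hyp^3}$.

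First I would make precise what ``equivalent'' buys us. Since $\mu_p^{\Sigma}$ and $\mu_p^{\Hyp^3}$ are mutually absolutely continuous Radon measures on the compact set $\Lambda$, there is a measurable $f=\frac{d\mu_p^{\Sigma}}{d\mu_p^{\Hyp^3}}$ with $f>0$ $\mu_p^{\Hyp^3}$-a.e. The crucial point is that $f$ can be taken bounded away from $0$ and $\infty$: this follows from the $\Gamma$-equivariance of both families $\{\mu_p^{\Sigma}\}$ and $\{\mu_p^{\Hyp^3}\}$ together with the quasi-conformal transformation rule, exactly as in the classical compact two-metrics situation. Concretely, the function $p\mapsto \frac{d\mu_p^{\Sigma}}{d\mu_p^{\Hyp^3}}(\xi)$ transforms in a controlled way under $\Gamma$, $\Gamma$ acts cocompactly, and the Busemann cocycles for the two metrics differ by a bounded amount on the orbit; an ergodicity argument (the geodesic flow is ergodic for $\mu_{BM}$, by \cite{nicholls1989ergodic}) then forces $f$ to agree a.e. with a genuinely bounded function. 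So there is $c_0>1$ with $c_0^{-1}\le f\le c_0$ $\mu_p^{\Hyp^3}$-a.e., hence $c_0^{-1}\mu_p^{\Hyp^3}\le \mu_p^{\Sigma}\le c_0\,\mu_p^{\Hyp^3}$ as measures on $\Lambda$.

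Next I would feed this into the Shadow's Lemma. Fix $R$ large enough that Lemma \ref{lemme de l'ombre} applies to both $(\Sigma,g)$ and $(\Hyp^3,g_{\Hyp^3})$. By Theorem \ref{comparaison des boules} the shadows $pr_p^{\Sigma}(B_{\Hyp^3}(x,R)\cap\Sigma)$ and $pr_p^{\Hyp^3}(B_{\Hyp^3}(x,R))\cap\Lambda$ are sandwiched between shadows of $\Sigma$-balls and shadows of $\Hyp^3$-balls of radius $R\pm C$. Applying Lemma \ref{lemme de l'ombre} in $\Sigma$ gives $\mu_p^{\Sigma}$ of such a shadow comparable to $e^{-h(\Sigma)d_\Sigma(x,p)}$, and applying it in $\Hyp^3$ gives $\mu_p^{\Hyp^3}$ of the corresponding $\Hyp^3$-shadow comparable to $e^{-h(\Hyp^3)d_{\Hyp^3}(x,p)}$. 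Combining these with the two-sided bound $c_0^{-1}\mu_p^{\Hyp^3}\le\mu_p^{\Sigma}\le c_0\mu_p^{\Hyp^3}$, and letting $x$ run out to a point $\xi\in\Lambda$ along a geodesic, yields a uniform estimate of the form $|h(\Sigma)\beta_\xi^{\Sigma}(p,q)-h(\Hyp^3)\beta_\xi^{\Hyp^3}(p,q)|\le \mathrm{const}$ — and, more to the point, a comparison of Gromov products: there are constants $A\ge 1,B\ge 0$ with $A^{-1}\scal{\xi}{\eta}_p^{\Hyp^3}-B\le \scal{\xi}{\eta}_p^{\Sigma}\le A\scal{\xi}{\eta}_p^{\Hyp^3}+B$ for all $\xi,\eta\in\Lambda$, where the multiplicative constant $A$ comes from the ratio $h(\Hyp^3)/h(\Sigma)$ and the additive $B$ absorbs $C$, $R$, $c_0$ and the Shadow's Lemma constants. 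Exponentiating, with $D_\Sigma(\xi,\eta)=\exp(-a_\Sigma\scal{\xi}{\eta}_p^{\Sigma})$ and $D_{\Hyp^3}(\xi,\eta)=\exp(-a_{\Hyp^3}\scal{\xi}{\eta}_p^{\Hyp^3})$ as in Definition \ref{def - gromov distance}, this says precisely that $D_\Sigma$ and $D_{\Hyp^3}$ are Hölder equivalent: $D_\Sigma\asymp D_{\Hyp^3}^{\theta}$ for a suitable exponent $\theta>0$ built from $a_\Sigma,a_{\Hyp^3},A$, up to multiplicative constants from $B$.

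The main obstacle is the first step — upgrading mere equivalence of $\mu_p^{\Sigma}$ and $\mu_p^{\Hyp^3}$ to a \emph{uniform} two-sided bound on the density. The naive Radon–Nikodym derivative need not be bounded a priori; one really has to exploit that both measure families are Patterson–Sullivan families for cocompact actions, so the density is, up to the quasi-conformal correction factors, essentially $\Gamma$-invariant, and then use ergodicity of the geodesic flow (equivalently of the $\Gamma$-action on $\Lambda^{(2)}$) to conclude it is a.e. constant after the correction, hence bounded. Everything after that — Theorem \ref{comparaison des boules}, the Shadow's Lemma, and the bookkeeping of constants — is routine, if a little tedious. I would also be slightly careful that the Shadow's Lemma constant $R$ and the quasi-isometry constant $C$ of Theorem \ref{comparaison des boules} are chosen compatibly, but this is not a serious difficulty.
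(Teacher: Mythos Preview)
Your Step 1 is the entire difficulty, and the justification you sketch does not actually work. The density $f_p=\frac{d\mu_p^{\Sigma}}{d\mu_p^{\Hyp^3}}$ is \emph{not} $\Gamma$-invariant: a direct computation from the quasi-conformal rules gives
\[
f_p(\gamma\xi)=f_p(\xi)\,\exp\!\big(-h(\Sigma)\beta^{\Sigma}_{\xi}(\gamma^{-1}p,p)+h(\Hyp^3)\beta^{\Hyp^3}_{\xi}(\gamma^{-1}p,p)\big),
\]
and the cocycle in the exponent depends on $\xi$ through two \emph{different} Busemann functions whose comparison is precisely what you are trying to establish. So there is no ``correction'' making $f_p$ into a $\Gamma$-invariant function of $\xi$ alone to which ergodicity on $\Lambda$ could be applied; the argument as written is circular. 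Your appeal to ``exactly as in the classical compact two-metrics situation'' does not help, because in that situation the boundedness of $f$ is obtained by the same current argument the paper uses.

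The paper fixes this by passing to $\Lambda^{(2)}$: the Bowen--Margulis currents $\nu_{\Sigma}=D_{\Sigma}^{-2\delta(\Sigma)}d\mu_p^{\Sigma}\otimes d\mu_p^{\Sigma}$ and $\nu_{\Hyp^3}=D_{\Hyp^3}^{-2\delta(\Hyp^3)}d\mu_p^{\Hyp^3}\otimes d\mu_p^{\Hyp^3}$ \emph{are} $\Gamma$-invariant, and under your equivalence hypothesis they are mutually absolutely continuous. Ergodicity of the $\Gamma$-action on $\Lambda^{(2)}$ then forces $\nu_{\Sigma}=c\,\nu_{\Hyp^3}$, i.e.
\[
f(\xi)f(\eta)\,D_{\Hyp^3}(\xi,\eta)^{2\delta(\Hyp^3)}=c\,D_{\Sigma}(\xi,\eta)^{2\delta(\Sigma)}.
\]
This single identity simultaneously shows that $f$ agrees a.e.\ with a continuous function (hence is bounded on the compact $\Lambda$) \emph{and} gives the H\"older equivalence $D_{\Hyp^3}^{\delta(\Hyp^3)}\asymp D_{\Sigma}^{\delta(\Sigma)}$ directly, with no need for the Shadow's Lemma or Theorem \ref{comparaison des boules}. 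Your Steps 2--5 would indeed work once $f$ is known to be bounded (after checking uniformity of the shadow constants over all basepoints, which follows from cocompactness), but they are a detour: the object you need for Step 1 already delivers the conclusion.
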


\begin{proof}
Let us consider on $\Lambda^{(2)}$ the Bowen-Margulis currents defined by 
$$\nu_\Sigma (\xi,\eta) = \frac{d\mu^p_\Sigma (\xi) d\mu^p_\Sigma(\eta)}{D_{\Sigma}(\xi,\eta) ^{2\delta(\Sigma)}}$$
$$\nu_{\Hyp^3} (\xi,\eta) = \frac{d\mu^p_{\Hyp^3} (\xi) d\mu^p_{\Hyp^3}(\eta)}{D_{\Hyp^3}(\xi,\eta) ^{2\delta(\Hyp^3)}}.$$
These two measures are $\G-$invariant by the previous computations made for the Bowen-Margulis measures. 

By assumption $\mu^p_\Sigma$ and $\mu_p^{\Hyp^3}$  are equivalent, therefore  $\nu_\Sigma$ and $\nu_{\Hyp^3}$ are also equivalent. The ergodicity and the  $\G$-invariance implies the existence of $c>0$ such that
$$\nu_\Sigma=c\nu_{\Hyp^3}.$$

Since $\mu_p^{{\Sigma}}$ and $\mu_p^{\Hyp^3} $ are equivalent there exists a function  $f :\Lambda \tv \R^+$ such that $\mu_p^{{\Sigma}} (\xi) =f(\xi) \mu_p^{\Hyp^3} $. 
We have
$$f(\xi)f(\eta) D^{\delta(\Hyp^3)}_{\Hyp^3}(\xi,\eta) = c D_\Sigma^{\delta(\Sigma)} (\xi,\eta).$$
We see that $f$ is equal almost everywhere to a continuous function. We can therefore suppose that $f$ is continuous on  $\Lambda$ hence strictly positive. By compacity, there exists $C>1$ such that $\frac{1}{C}\leq f(\xi) \leq C$. 
Finally we get what we stated 
$$\frac{c}{C^2} D_\Sigma^{\delta(\Sigma)} (\xi,\eta) \leq D^{\delta(\Hyp^3)}_{\Hyp^3}(\xi,\eta)  \leq C^2 c D_\Sigma^{\delta(\Sigma)} (\xi,\eta).$$
\end{proof}

We now show the second part 

\begin{lemme}\label{lem holder equivalent implique spectre marqué proportionnel}
If $D_\Sigma$ and $D_{\Hyp^3}$ are Hölder equivalent the marked length spectrum are proportional. 
\end{lemme}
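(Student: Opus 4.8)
The plan is to exploit the dynamical meaning of the Gromov distances on the boundary: for a negatively curved metric $g$ on a compact surface, a closed geodesic freely homotopic to $\g \in \G$ has length $\ell_g(\g)$ equal (up to additive error) to the translation length of $\g$ on the universal cover, which in turn is encoded by how $\g$ acts near its attracting fixed point $\g^+ \in \La$. Concretely, I would first recall the standard estimate relating the derivative of $\g$ at $\g^+$ for the Gromov metric $D_X$ to the translation length: if $\g$ has attracting fixed point $\xi^+ = \g^+$ and repelling fixed point $\xi^-=\g^-$, then for the visual parameter $a$ used to define $D_X$ one has, for $\eta$ in a fixed neighbourhood of $\xi^+$ away from $\xi^-$,
\[
D_X(\g \eta, \g \xi^+) \asymp e^{-a\,\ell_X(\g)}\, D_X(\eta,\xi^+),
\]
i.e. $\g$ is approximately a homothety of ratio $e^{-a\ell_X(\g)}$ near its attracting fixed point, with multiplicative constants independent of $\g$ (this is where the $\G$-equivariance and the coarse geometry enter). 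I would state this for $X = \widetilde\Sigma$ and for $X = \Hyp^3$ (with $\La$ in place of $\partial\Hyp^3$ throughout, since we only look at the limit set).

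Next, iterate: applying the estimate to $\g^n$ and letting $n \to \infty$ turns the multiplicative constants into a harmless bounded factor that disappears after taking $n$-th roots, so that
\[
\lim_{n\to\infty} \frac{1}{n}\log \frac{D_X(\eta,\xi^+)}{D_X(\g^n\eta,\xi^+)} = a\,\ell_X(\g),
\]
which exhibits $\ell_X(\g)$ intrinsically in terms of the metric $D_X$ and the dynamics of $\g$ on the boundary, with no reference to the metric on $X$ other than through $D_X$.

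Now feed in the hypothesis. By Lemma \ref{lem measure equivalent implies gromov distance holder} we have constants $\la \in (0,1]$ (namely $\la = \delta(\Sigma)/\delta(\Hyp^3)$, after matching the exponents $2\delta$ appearing in the two Bowen--Margulis currents) and $C>1$ with
\[
\frac{1}{C}\, D_\Sigma(\xi,\eta)^{\la} \le D_{\Hyp^3}(\xi,\eta) \le C\, D_\Sigma(\xi,\eta)^{\la}
\]
for all $\xi,\eta \in \La$ — more precisely the statement of that lemma gives $D_\Sigma^{\delta(\Sigma)} \asymp D_{\Hyp^3}^{\delta(\Hyp^3)}$ up to multiplicative constants, so $D_{\Hyp^3} \asymp D_\Sigma^{\la}$ with $\la=\delta(\Sigma)/\delta(\Hyp^3)$ and with the visual parameters chosen consistently. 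Applying this Hölder comparison to the pairs $(\g^n\eta,\g^n\xi^+)$ and $(\eta,\xi^+)$ in the iterated formula above, the multiplicative constants $C$ contribute $O(1)$, hence vanish after dividing by $n$, and we obtain, for every $\g \in \G$ of infinite order,
\[
a_{\Hyp^3}\,\ell_{\Hyp^3}(\g) = \la \cdot a_\Sigma\, \ell_\Sigma(\g),
\]
so the marked length spectra are proportional with ratio $\la\, a_\Sigma / a_{\Hyp^3}$, independent of $\g$. That is the claim.

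The main obstacle is the first step: establishing cleanly, with constants uniform in $\g$, the ``$\g$ acts like a homothety of ratio $e^{-a\ell_X(\g)}$ near its attracting fixed point'' estimate for the coarse boundary metric $D_X$, in the generality of a negatively curved (not constant curvature, and for $\Sigma$ only quasi-isometric to the convex core) situation where $D_X$ is only a genuine metric for small visual parameter and Gromov products are only coarsely additive. I expect to handle this by working with the hyperbolic-group/CAT($-\kappa$) picture: identify $\ell_X(\g)$ with the stable translation length $\lim_n \frac1n d_X(x, \g^n x)$ up to bounded error, use the comparison between Gromov product and Busemann cocycle recorded just before Definition \ref{def - gromov distance} (the identity $\beta_\xi(\g p)+\beta_\eta(\g p) = 2\langle\g^{-1}\xi,\g^{-1}\eta\rangle - 2\langle\xi,\eta\rangle$), and push the multiplicative errors to the $n$-th root limit so that only the asymptotic rate survives. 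The rest is bookkeeping with the two visual parameters $a_\Sigma,a_{\Hyp^3}$ and the exponent $\la$.
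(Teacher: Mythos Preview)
Your proposal is correct and follows essentially the same strategy as the paper: recover $\ell_X(\g)$ from the boundary dynamics of $\g$ via an $n\to\infty$ limit, then transport through the H\"older equivalence so that the multiplicative constants vanish. The only difference is packaging: instead of your two-point ratio $D_X(\eta,\xi^+)/D_X(\g^n\eta,\xi^+)$ and the attendant ``homothety estimate'' you flag as the main obstacle, the paper invokes the cross-ratio identity
\[
\lim_{n\to\infty}\frac{1}{n}\log\,[\g^-,\g^+,\g^n\xi,\xi]_X \;=\; \ell_X(\g),
\]
quoted from \cite[Section~3.5]{paulin2015equilibrium}, which absorbs that estimate as a black box; since the cross-ratio is a ratio of four $D_X$-distances, the H\"older comparison gives $[\cdot]_\Sigma \asymp [\cdot]_{\Hyp^3}^{r}$ directly and the conclusion $\ell_\Sigma(\g)=r\,\ell_{\Hyp^3}(\g)$ follows in one line.
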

\begin{proof}
In \cite[Section 3.5]{paulin2015equilibrium} the authors show that in a very general setting we have:
$$\lim_{n\tv \infty}\frac{1}{n} \log [g^-,g+,g^n(\xi),\xi]= \ell(g),$$
where $\ell(g)$ is the displacement of  $g$ and $[g^-,g^+,g^n(\xi),\xi] = \frac{D(g^-,g^n(\xi))D(g^+,\xi)}{D(g^-,\xi)D(g^+,g^n(\xi))}$.


In particular, we can apply this result to $\Sigma$ and $\Hyp^3$ we get 
$$\lim_{n\tv \infty}\frac{1}{n} \log [g^-,g^+,g^n(\xi),\xi]_{\Sigma}= \ell_\Sigma(g),$$
and
$$\lim_{n\tv \infty}\frac{1}{n} \log [g^-,g^+,g^n(\xi),\xi]_{\Hyp^3}= \ell_{\Hyp^3}(g).$$

By assumption on the distances $D_\Sigma,D_{\Hyp^3}$, there exists $C>1$ such that  we have
$$\frac{1}{C} [g^-,g^+,g^n(\xi),\xi]_{\Hyp^3}^r\leq [g^-,g^+,g^n(\xi),\xi]_{\Sigma}\leq C  [g^-,g^+,g^n(\xi),\xi]_{\Hyp^3}^r.$$
Hence
$$\ell_\Sigma (g) =r\ell_{\Hyp^3}(g).$$
\end{proof}

Theorem \ref{L'eaglite des PS implique proportionalite des psectres} follows directly from Lemmas \ref{lem measure equivalent implies gromov distance holder} and \ref{lem holder equivalent implique spectre marqué proportionnel}.

We will show at the very end of this article, that if $\Sigma$ has the same length spectrum as $M=\Hyp^3/\G$ then $\G$ is Fuchsian to prove Corollary \ref{cor - h(sigma)=delta imply fuchsian}. It might be also true even when we only suppose that they are proportional, however this does not follow from our proof. 
\subsection{Entropy comparison}
We finally get to the proof of Theorem \ref{th - main qf}. First we prove the inequality using the behaviour of Patterson-Sullivan measures and a volume comparison of a subset of $\Sigma$, the proof follows the same lines as \cite[Theorem 3.4]{knieper1995volume}. Then we prove the equality case using Theorem \ref{L'eaglite des PS implique proportionalite des psectres}.
\begin{theorem}
Let $\Sigma \subset \Hyp^3$ be a $\G$ invariant embedded disk, whose induced metric $g$  has negative curvature, then 
$$h(g) \leq I_{\mu_{BM}} (\Sigma , M) \delta(\G).$$
Moreover, the equality occurs if and only if the marked length spectrum of $\Sigma$ is proportional to the marked length spectrum of  $M$. In this case, the proportionality factor is given by $\ell_\Sigma(g) I(\Sigma, M) =\ell_M(g)$.
\end{theorem}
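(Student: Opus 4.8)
The plan is to compare the Patterson--Sullivan measures $\mu_p^\Sigma$ and $\mu_p^{\Hyp^3}$ via the ball comparison of Theorem~\ref{comparaison des boules} and the Shadow Lemma~\ref{lemme de l'ombre}, following the strategy of \cite[Theorem 3.4]{knieper1995volume}. First I would fix $p\in\Sigma$ and a fundamental domain, and estimate the $\mu_p^{\Hyp^3}$-measure of the shadow $pr_p^{\Hyp^3}(B_{\Hyp^3}(x,R)\cap\Sigma)$ of an intrinsic ball. By Theorem~\ref{comparaison des boules}, this shadow is sandwiched between $pr_p^{\Hyp^3}(B_{\Hyp^3}(x,R-C))\cap\Lambda$ and $pr_p^{\Hyp^3}(B_{\Hyp^3}(x,R+C))$, so the Shadow Lemma for $\Hyp^3$ gives
$$\mu_p^{\Hyp^3}\big(pr_p^{\Sigma}(B_{\Hyp^3}(x,R)\cap\Sigma)\big)\asymp e^{-\delta(\G)\,d_{\Hyp^3}(x,p)}.$$
On the other hand, again by Theorem~\ref{comparaison des boules} the same set contains $pr_p^{\Sigma}(B_\Sigma(x,R-C))$, whose $\mu_p^\Sigma$-measure is $\asymp e^{-h(g)\,d_\Sigma(x,p)}$ by the Shadow Lemma for $\Sigma$. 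The cocycle $a$ enters precisely here: $d_{\Hyp^3}(x,p)$ and $a(v,t)$ agree up to bounded error when $x=\pi\phi_t^\Sigma(v)$ and $d_\Sigma(x,p)=t+O(1)$.

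The second step is the integration argument. I would integrate over $T^1\Sigma/\G$ against the Bowen--Margulis measure $\mu_{BM}^\Sigma$. Covering the ball $B_\Sigma(o,R)$ by a bounded-multiplicity family of unit balls centered along geodesics, the total $\mu_p^\Sigma$-mass of shadows is $\asymp e^{h(g)R}$ (this recovers $h(g)$ as the exponential growth rate), while the same collection of sets has total $\mu_p^{\Hyp^3}$-mass controlled by $\sum e^{-\delta(\G)d_{\Hyp^3}(x_i,p)}$. Using the definition of $I_{\mu_{BM}}(\Sigma,M)=\lim a(v,t)/t$ and the Birkhoff/Kingman ergodic theorem along $\mu_{BM}$-typical geodesics, one gets $d_{\Hyp^3}(x_i,p)\approx I_{\mu_{BM}}(\Sigma,M)\cdot d_\Sigma(x_i,p)/1$ for most terms, whence the total $\mu_p^{\Hyp^3}$-mass is at most roughly $e^{(h(g)-\delta(\G)I_{\mu_{BM}}(\Sigma,M))R}$ times a subexponential factor; since $\mu_p^{\Hyp^3}$ is a probability measure this forces $h(g)-\delta(\G)I_{\mu_{BM}}(\Sigma,M)\le 0$, which is the inequality.

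For the equality case, I would argue that equality in the above chain forces the ergodic averages $a(v,t)/t$ to be essentially constant and, more importantly, forces the two shadow estimates to match up to multiplicative constants uniformly, i.e. $\mu_p^\Sigma$ and $\mu_p^{\Hyp^3}$ assign comparable mass to all shadows of balls. By a standard Vitali-type covering argument this upgrades to mutual absolute continuity of $\mu_p^\Sigma$ and $\mu_p^{\Hyp^3}$. Then Theorem~\ref{L'eaglite des PS implique proportionalite des psectres} immediately yields that the marked length spectrum of $\Sigma$ is proportional to that of $M$; conversely, if the spectra are proportional with factor $r$, then $a(v,t)/t\to r$ along closed geodesics and by density/continuity along $\mu_{BM}$-generic orbits, so $I_{\mu_{BM}}(\Sigma,M)=r$ and tracing the Poincaré series shows $h(g)=r\,\delta(\G)$, giving equality and the stated identity $\ell_\Sigma(g)I(\Sigma,M)=\ell_M(g)$.

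The main obstacle I anticipate is making the passage from the pointwise cocycle limit $I_{\mu_{BM}}(\Sigma,M)$ to a uniform-in-$R$ control of $d_{\Hyp^3}(x_i,p)$ in terms of $d_\Sigma(x_i,p)$ over the covering family: Kingman's theorem gives convergence only $\mu_{BM}$-almost everywhere and only in the limit, so one must split the tangent bundle into a ``good'' set where the average is within $\varepsilon$ of $I_{\mu_{BM}}(\Sigma,M)$ (of nearly full $\mu_{BM}$-measure, hence nearly full $\mu_p^\Sigma\times\mu_p^\Sigma$-measure on pairs of endpoints) and a ``bad'' set whose contribution must be shown to be subexponential or absorbed into the error — exactly as in Knieper's argument, but here complicated by the fact that $d$ is not induced by a metric on $\Sigma$, so some of his smoothness inputs are unavailable and must be replaced by the quasi-isometry estimates of Theorem~\ref{comparaison des boules}.
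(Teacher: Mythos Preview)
Your plan is essentially the paper's, and is correct in outline, but there are two points worth flagging where you diverge or are less precise than the paper.

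\textbf{Inequality.} The paper also covers the set $\{c_{p,\xi}(t):\xi\in A_p^{T,\varepsilon}\}\subset S_\Sigma(p,t)$ by $\Sigma$-balls $B_\Sigma(x_i,R)$ with $B_\Sigma(x_i,R/4)$ disjoint, and uses the Shadow Lemma for $\mu_p^\Sigma$ to get the lower bound $\Card(I)\gtrsim e^{h(g)t}$. For the upper bound, however, the paper does \emph{not} use $\mu_p^{\Hyp^3}$ as you propose; it uses a straight volume count in the $Q$-neighbourhood $C_Q(\Lambda)$ of the convex core: the small $\Hyp^3$-balls $B_{\Hyp^3}(x_i,R/4)$ all sit in $B_{\Hyp^3}(p,t(I_{\mu_{BM}}+\varepsilon)+R+K)\cap C_Q(\Lambda)$ and $\delta(\G)$ is the growth rate of $\vol_{\Hyp^3}(\cdot\cap C_Q(\Lambda))$. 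Your route via the probability measure $\mu_p^{\Hyp^3}$ is a legitimate alternative, but note that you then must control the \emph{overlap of the $\Hyp^3$-shadows} of the $B_\Sigma(x_i,R)$, not just of the balls themselves; the bounded-multiplicity comes from the fact that only boundedly many $x_i$ can lie in a fixed $\Hyp^3$-ball (since $\Sigma$ is an embedded cocompact surface). The paper's volume argument needs the same bounded-overlap observation, so neither approach is really simpler.

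\textbf{Equality.} Here your sketch is too vague. To upgrade the one-sided estimate $a(v_p(\xi),t)\le (I_{\mu_{BM}}+\varepsilon)t$ on the good set into a \emph{two-sided} bound $|d(p,\pi\phi_{t_n}^{\Sigma}v_p(\xi))-I_{\mu_{BM}}\,t_n|\le L$ along a sequence $t_n\to\infty$, the paper proves and uses an intermediate lemma (the analogue of \cite[Cor.~3.6]{knieper1995volume}): the cocycle $a$ is not only subadditive but \emph{quasi-additive}, $a(v,t_1)+a(\phi_{t_1}^\Sigma v,t_2)\le C+a(v,t_1+t_2)$, which follows from the Morse-lemma fact that $\Sigma$-geodesics are at uniformly bounded $\Hyp^3$-distance from $\Hyp^3$-geodesics with the same endpoints. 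This quasi-additivity is exactly what lets you pin down a sequence along which the shadow estimates for $\mu_p^\Sigma$ and $\mu_p^{\Hyp^3}$ match to within a fixed multiplicative constant, and hence conclude equivalence of the two measures. A generic ``Vitali-type covering argument'' will not produce this without that input; you should isolate the quasi-additivity of $a$ as the key step in the equality case.
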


\begin{proof}
The geodesic flow is ergodic with respect to the Bowen-Margulis measure $\mu_{BM}$, hence for $\mu_{BM}$-almost all  $v\in T^1\Sigma$ we have : 
$$ \lim_{t\tv \infty} \frac{a(v,t)}{t}=I_\mu(\Sigma, M).$$
Let $v$ and $v'$ be two unit vectors on the same weak stable manifold. 
Then $d(c_{v'}(t),c_{v'}(0) ) \leq d(c_{v'}(t),(c_{v}(t)) +  d(c_{v}(t),(c_{v}(0)) + d(c_{v}(0),(c_{v'}(0))$, and the same inequality holds interchanging the role of $v$ and $v'$. Moreover  $d(c_{v'}(t),(c_{v}(t)) $ decreases exponentially since $v$ and $v'$ are on the same weak stable manifold. Hence $\lim_{t\tv \infty} \frac{a(v,t)}{t}$ exists if and only if $\lim_{t\tv \infty} \frac{a(v',t)}{t}$ does. 

Let $v_p(\xi)$ denotes the unitary vector in $T^1_p {\Sigma}$ such that $c_{v_p(\xi)} (\infty) =\xi$. The previous fact and the product structure of $d\mu_{BM}$ assures that for $\mu_p^g$ almost all $\xi \in \partial {\Sigma}$ we have 
$$\lim_{t\tv \infty} \frac{a(v_p(\xi),t)}{t}=I_\mu(\Sigma, M).$$

For all $\epsilon >0$ and $T>0$ we define the set 
$$A_p^{T,\epsilon} = \left\{ \xi \in  \partial {\Sigma} | \left|  \frac{a(v_p(\xi),t)}{t}-I_\mu(\Sigma, M) \right| \leq \epsilon, \quad t\geq T\right\}.$$
For all $d\in ]0,1[$ and all $\epsilon >0$, there exists $T>0$ such that $\mu_p^{{\Sigma}}(A_p^{T,\epsilon}) \geq d$. For $t>T$ consider the subset $\{c_{p,\xi}  (t) | \xi \in A_p^{T,\epsilon}\} \subset S_g(p,t)$ of the geodesic sphere of radius $t$ and center $p$ on ${{\Sigma}}$.

Choose $\{ B_{{\Sigma}}(x_i,R) | i\in I\}$  a covering of this subset of fixed radius $R>0$ such that $x_i \in S_{{\Sigma}}(p,t)$ and $B_{{\Sigma}}(x_i,R/4)$ are pairwise disjoint. 
Then, by the local behaviour of $\mu_p^{{\Sigma}}$, there exists a constant $c>1$, independent of $t $, such that 
$$\frac{1}{c} e^{-h(g) t}  \leq \mu_p^{{\Sigma}} (pr_p^{{\Sigma}} (B_{{\Sigma}}(x_i,R)))\leq c e^{-h(g) t}.$$
It is clear that $A_p^{T,\epsilon}  \subset \cup_{i\in I} pr_p^{{\Sigma}} (B_{{\Sigma}}(x_i,R))$ and therefore, 
$$d \leq  \mu_p^{{\Sigma}} \left(  \cup_{i\in I} pr_p^{{\Sigma}} (B_{{\Sigma}}(x_i,R))  \right)\leq \sum_{i\in I} \mu_p^{{\Sigma}} (pr_p^{{\Sigma}} (B_{{\Sigma}}(x_i,R))) \leq c \Card(I) e^{-h(g)t}.$$

Since $\Hyp^3/\G$ is convex cocompact, $C_Q(\Lambda)/\G$ is compact, where $C_Q(\Lambda) $ is  the $Q$ neighbourhood of the convex core of $\Lambda$.  Hence  for any $Q>0$, 
$$\delta( \G)  = \lim_{R\tv \infty} \vol(B_{\Hyp^3} (o,R) \cap C_Q(\Lambda)).$$
 
 Now take $Q$ sufficiently large such that ${\Sigma  }$ is inside $C_Q (\Lambda)$. There exists $K$ such that $B_{{\Sigma}} (x_i,R/4) \subset B_{\Hyp^3}(x_i,R+K) \cap C_Q(\Lambda)$.
 
From the definition of the set $A_p^{T,\epsilon} $ , we then have that the disjoint union $\cup_{i\in I} B_{{\Sigma}} (x_i,R/4) \subset B_{\Hyp^3}(p,t(I_{\mu_{BM}} (\Sigma, \Hyp^3) +\epsilon) +  R+K)  \cap C_Q(\Lambda)$. It follows that, 

\begin{eqnarray*}
e^{h(g) t } &\leq & \frac{c}{d}\Card (I)\\
				&\leq & \frac{c}{d V} \sum_{i\in I} vol_{\Hyp^3} (B_{\Hyp^3} (x_i ,R/4))\cap C_Q(\Lambda) )\\
				&\leq & \frac{c}{d V}  vol_{\Hyp^3} ( B_{\Hyp^3} (p , t(I_{\mu_{BM}} (\Sigma, \Hyp^3) +\epsilon) +  R+K) \cap C_Q(\Lambda))
\end{eqnarray*}
Hence
$$h(g)\leq \frac{1}{t} \left( \log \frac{c}{dV} + \log vol_{\Hyp^3}( B_{\Hyp^3} (p , t(I_{\mu_{BM}} (\Sigma, \Hyp^3) +\epsilon)) +  R+K) \cap C_Q(\lambda)) \right) $$
Taking the limit $t\tv \infty$ we get 
$$h(g) \leq (I_{\mu_{BM}} (\Sigma, \Hyp^3) +\epsilon) \delta(\G)$$
and we conclude since $\epsilon$ is arbitrary.
\end{proof}

For the proof of the equality case in Theorem \ref{th - main qf} we will use the result equivalent to  \cite[Corollary 3.6]{knieper1995volume} in our context, that is: 
\begin{lemme}\cite{knieper1995volume}
Let $p \in {\Sigma} $ and $\mu_p^g$ the Patterson-Sullivan measure with respect to $p$ and $g$, there exists a constant $L$ such that for $\mu_p^g$ almost all $\xi \in \partial {\Sigma}$ there is a seqence $t_n\tv \infty $ such that 
$$|d( p, \pi \phi_{t_n} ^{{\Sigma}}v_p(\xi) ) - I_{\mu_{BM}} (\Sigma, \Hyp^3) t_n | \leq L.$$
\end{lemme}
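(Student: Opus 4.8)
The plan is to follow Knieper's strategy: replace the subadditive cocycle $a(v,t)=d\bigl(\pi\phi_t^{\Sigma}(v),\pi(v)\bigr)$ by a genuinely \emph{additive} cocycle at a uniformly bounded cost, and then invoke a recurrence theorem of Atkinson. Fix $v\in T^1\Sigma$ and put $\eta:=c_v(-\infty)$. Since $(\Sigma,g)$ is negatively curved, $\Sigma$ is quasi-isometric to $C(\Lambda)$, and the $\Sigma$-geodesic carrying $v$ stays within Hausdorff distance $C_1$ of the $\Hyp^3$-geodesic with endpoints $c_v(\pm\infty)$, the constant $C_1$ introduced before Theorem~\ref{comparaison des boules}. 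Hence $\pi\phi_t^{\Sigma}(v)$ remains $C_1$-close to the $\Hyp^3$-line $(\eta,c_v(+\infty))$ for all $t$, and, $\beta^{\Hyp^3}_{\eta}$ being $1$-Lipschitz, there is a uniform constant $K_1$ with $\left|a(v,t)-\tilde a(v,t)\right|\le K_1$, where $\tilde a(v,t):=\beta^{\Hyp^3}_{\eta}\bigl(\pi\phi_t^{\Sigma}(v),\pi(v)\bigr)$. The Busemann cocycle identity and the equality $c_{\phi_s^{\Sigma}v}(-\infty)=c_v(-\infty)$ make $\tilde a$ an additive cocycle over $\phi_t^{\Sigma}$; in fact $\tilde a(v,t)=\int_0^t\psi\bigl(\phi_s^{\Sigma}v\bigr)\,ds$ with $\psi(w)=\bigl\langle\nabla^{\Hyp^3}\beta^{\Hyp^3}_{c_w(-\infty)}(\pi w),\,w\bigr\rangle_{\Hyp^3}$ a continuous function bounded by $1$ (on the right, $w$ denotes the image of $w$ under the inclusion $T^1\Sigma\hookrightarrow T\Hyp^3$).

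Next I would identify the drift: by Birkhoff's theorem and ergodicity of $\mu_{BM}$ (cf.\ \cite{nicholls1989ergodic}), $\frac1t\tilde a(v,t)\to\int_{T^1\Sigma}\psi\,d\mu_{BM}$ for $\mu_{BM}$-a.e.\ $v$, whereas $\frac1t a(v,t)\to I_{\mu_{BM}}(\Sigma,\Hyp^3)$ a.e.; as $|a-\tilde a|\le K_1$ these limits coincide, so $\int\psi\,d\mu_{BM}=I_{\mu_{BM}}(\Sigma,\Hyp^3)=:I$. Thus $\psi-I$ is continuous, bounded, of zero $\mu_{BM}$-mean, and Atkinson's recurrence theorem (in its version for an ergodic flow, the cocycle being the Birkhoff integral of an $L^1$ function) applied to $\bigl(\phi_t^{\Sigma},\mu_{BM}\bigr)$ gives, for $\mu_{BM}$-a.e.\ $v$, that $\liminf_{t\to\infty}\left|\tilde a(v,t)-It\right|=0$, hence $\liminf_{t\to\infty}\left|a(v,t)-It\right|\le K_1$.

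The last step is to transfer this from ``$\mu_{BM}$-a.e.\ $v$'' to the one-parameter family $\{v_p(\xi)\}_{\xi\in\partial\Sigma}$, which is $\mu_{BM}$-negligible. Using $d\mu_{BM}=e^{2h(g)\scal{\xi}{\eta}_p}\,d\mu_p^{\Sigma}(\xi)\,d\mu_p^{\Sigma}(\eta)\,dt$ and disintegrating along $v\mapsto c_v(+\infty)$ (whose push-forward is proportional to $\mu_p^{\Sigma}$), the previous conclusion reads: for $\mu_p^{\Sigma}$-a.e.\ $\xi$, the bound $\liminf_t|a(v,t)-It|\le K_1$ holds for conditionally a.e.\ $v$ in the weak stable leaf $W^{s}(\xi)=\{v:c_v(+\infty)=\xi\}$. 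Fix such a $\xi$; for each $\varepsilon>0$ the set of $v\in W^{s}(\xi)$ whose base point is $\varepsilon$-close to $p$ has positive conditional measure, because $\supp\mu_p^{\Sigma}=\Lambda=\partial\Sigma$, so it contains a ``good'' vector $v_\varepsilon$. The $\Sigma$-rays of $v_\varepsilon$ and of $v_p(\xi)$ are forward asymptotic, so $a(v_\varepsilon,\cdot)$ and $a(v_p(\xi),\cdot)$ differ, up to a bounded reparametrization, by a quantity that is $O(\varepsilon)+O(\delta_0)$, where $\delta_0$ bounds the exponential convergence of asymptotic geodesic rays on the compact surface $\Sigma/\G$; letting $\varepsilon\to0$ gives $\liminf_{t\to\infty}|a(v_p(\xi),t)-It|\le K_1+\delta_0=:L$, a constant independent of $\xi$, and any sequence $t_n\to\infty$ realizing this lower limit proves the lemma (after enlarging $L$ by $1$ one gets the inequality attained). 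The reduction to the additive cocycle and the drift computation should be routine given what is already set up; the genuine difficulty — and the place where negative curvature of $\Sigma$ is really used, through quasi-geodesic stability, ergodicity of $\mu_{BM}$, and full support of the Patterson--Sullivan measure — is precisely this transfer, with a constant $L$ uniform in $\xi$.
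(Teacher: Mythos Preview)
Your argument is correct and is, in effect, a self-contained proof of the result the paper cites as a black box. The paper's proof consists only of verifying the quasi-additivity inequality
\[
a(v,t_1)+a(\phi_{t_1}^{\Sigma}v,t_2)\le C+a(v,t_1+t_2)
\]
(from the fact that every $\Hyp^3$-geodesic between two points of a $\Sigma$-geodesic stays $C_1$-close to it, by the Morse lemma) and then invoking Knieper's Lemma~3.5 directly. What you do---replace $a$ by the explicit additive Busemann cocycle $\tilde a$, identify the drift by Birkhoff, apply Atkinson recurrence, and transfer along stable leaves to the family $v_p(\xi)$---is essentially the content of that cited lemma, specialized to this situation. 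The two proofs therefore share the same mathematical core; the paper outsources the work, you perform it.

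Two small points on your write-up. First, the bound $|a-\tilde a|\le K_1$ needs slightly more than ``$\beta_\eta$ is $1$-Lipschitz and the orbit stays $C_1$-close to the line $(\eta,c_v(+\infty))$'': one must also know the projections land in the right order. The clean way is to note that $a(v,t)-\tilde a(v,t)=2\scal{\pi\phi_t^{\Sigma}v}{\eta}_{\pi v}$ (Gromov product in $\Hyp^3$), which is uniformly bounded because $\pi v$ lies on the $\Sigma$-ray from $\eta$ to $\pi\phi_t^{\Sigma}v$, itself a quasi-geodesic in $\Hyp^3$ and hence within $C_1$ of the $\Hyp^3$-geodesic $[\eta,\pi\phi_t^{\Sigma}v]$. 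Second, in the transfer step there is no need to let $\varepsilon\to0$: the conditional measure on $W^s(\xi)$ has full support, so a single fixed $\varepsilon$ (say $\varepsilon=1$) already yields a good vector whose reparametrization shift and asymptotic distance to $v_p(\xi)$ are bounded uniformly in $\xi$, giving a uniform $L$.
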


\begin{proof}
It follows from Lemma 3.5 of \cite{knieper1995volume}, that our lemma is true provided there exists a constant  $C>0$ such that, for all $t_1,t_2>0$ and all $v\in T^1\Sigma$,
$$a(v,t_1) +a(\phi_{t_1} ^{{\Sigma}} v, t_2 ) \leq C +a (v,t_1+t_2).$$ 

Let $v\in T^1{\Sigma}$ and $c_v^{{\Sigma}}$ be the geodesic on ${\Sigma}$ directed by $v$. Recall that there exists $C_1$ such that the ${\Hyp^3}$-geodesic from $\pi(v) $ to $c_v^{{\Sigma} }(t_1+t_2)$ is at  bounded distance  $C_1$ of $ c_v^{{\Sigma}}(t_1+t_2)$, independant of $t_1$ and $t_2$. 
The $\Hyp^3$-geodesic from $p$ to $c_v^{{\Sigma} }(t_1)$ and  the one from $c_v^{{\Sigma} }(t_1)$  to $c_v^{{\Sigma} }(t_1+t_2)$ are also at bounded distance $C_1$ of $c_v^{{\Sigma} }$.  
This implies the desired property with $C=2C_1$. 
\end{proof}

\begin{proof}[Equality case in \ref{th - main qf}]
Suppose that $h(g) = I_{\mu_{BM}}(\Sigma,\Hyp^3)\delta(\G)$. Choose a point $p \in {\Sigma}$ and  $\xi \in \Lambda$, set $y_n :=  \pi \phi_{t_n} ^{{\Sigma}}v_p(\xi)$. From the above lemma, for $\mu_p^{{\Sigma}}$ almost all  $\xi$ we have 
 $$|d( p, y_n ) - I_{\mu_{BM}} (\Sigma, \Hyp^3) t_n | \leq L.$$

Set $R>0$ a fixed constant, by local property of the Patterson-Sullivan measure on $\Hyp^3$, there is $c_1$ such that 
$$\frac{1}{c_1} e^{-\delta (\G) d(p,y_n)} \leq \mu_p^{\Hyp^3} ( pr_{ \Hyp^3} B_{\Hyp^3} (y_n,R) ) \leq c_1  e^{-\delta (\G) d(p,y_n)},$$
by Theorem \ref{comparaison des boules}
$$pr_{ \Hyp^3} (B_{ \Hyp^3} (x,R-C) ) \cap \Lambda  \subset  pr_{ {\Sigma }} (B_{\Hyp^3} (x,R)  \cap {\Sigma } )   \subset    pr_{ \Hyp^3} (B_{ \Hyp^3} (x,R+C) ).$$
 Hence there is a constant $c_2$  such that 
$$\frac{1}{c_2} e^{-\delta (\G) d(p,y_n)} \leq \mu_p^{\Hyp^3} ( pr_{{\Sigma }} B_{\Hyp^3} (y_n,R) \cap {\Sigma} ) \leq c_1  e^{-\delta (\G) d(p,y_n)}.$$

By the local property  of the Patterson-Sullivan measure on ${{\Sigma }} $, there is $c_3$ such that 
$$\frac{1}{c_3} e^{-h ({{\Sigma }} ) d_{{\Sigma }} (p,y_n)} \leq \mu_p^{{\Sigma }}  ( pr_{{\Sigma }} B_{{\Sigma }}  (y_n,R) ) \leq c_3  e^{-h({{\Sigma }} ) d_{{\Sigma }} (p,y_n)},$$
and by Theorem \ref{comparaison des boules}
$$pr_{ {\Sigma }} (B_{ {\Sigma }} (x,R-C) ) \subset   pr_{ {\Sigma }} (B_{\Hyp^3} (x,R)  \cap {\Sigma } )
\subset pr_{ {\Sigma }} (B_{ {\Sigma }}(x,R+C) ).$$
Hence there is $c_4$ such that 
$$\frac{1}{c_4} e^{-h ({{\Sigma }} ) d_{{\Sigma }} (p,y_n)} \leq \mu_p^{{\Sigma }}  ( pr_{{\Sigma }} B_{\Hyp^3}  (y_n,R) \cap {\Sigma }) \leq c_4  e^{-h({{\Sigma }} ) d_{{\Sigma }} (p,y_n)}.$$

From the choice of $y_n$ and since $h({\Sigma }) = I_{\mu_{BM}}(\Sigma,\Hyp^3) \delta(\G)$  
$$e^{-L} e^{-\delta(\G) d(p,y_n)} \leq e^{-h(g) d_{{\Sigma }} (p,y_n)} \leq e^{L   } e^{-\delta(\G) d(p,y_n)}.$$
Hence there is $c_5>0$ such that  
$$\frac{1}{c_5} e^{-\delta(\G)d (p,y_n)} \leq \mu_p^{{\Sigma }}  ( pr_{{\Sigma }} B_{\Hyp^3}  (y_n,R) \cap {\Sigma }) \leq c_5  e^{-\delta(\G) d(p,y_n)}.$$

Finally we have a constant $c_6$ such that 
$$  c_6  \leq \frac{\mu_p^{{\Sigma }}  ( pr_{{\Sigma }} B_{\Hyp^3}  (y_n,R) \cap {\Sigma }) }{\mu_p^{\Hyp^3} ( pr_{{\Sigma }} B_{\Hyp^3} (y_n,R) \cap {\Sigma} ) } \leq c_6.$$

Since  $pr_{{\Sigma }} (B_{\Hyp^3}  (y_n,R) \cap {\Sigma}) \tv \xi$  the measures $\mu_p^{{\Sigma }} $ and $\mu_p^{\Hyp^3}$ are equivalent. 
We conclude by Theorem \ref{L'eaglite des PS implique proportionalite des psectres}.
\end{proof}

We finish this article by the proof of Corollary \ref{cor - h(sigma)=delta imply fuchsian}: 

\begin{corollary*}[\ref{cor - h(sigma)=delta imply fuchsian}]
Under the assumptions  of Theorem \ref{th - main qf} we have 
$$h(\Sigma) \leq \delta(\G,\Hyp^3),$$ 
with equality if and only if  $\G$ is fuchsian and $\Sigma$ is the totally geodesic hyperbolic plane, preserved by $\G$. 
\end{corollary*}

\begin{proof}
The inequality is obvious. Suppose the equality occurs. Then by Theorem \ref{th - main qf}, we have that the length spectrum is proportional to the one of $\Hyp^3/\G$ and moreover that $I(\Sigma, M)=1$. In other words the two length spectra are equal. 

Since $\Sigma$ is embedded in $\Hyp^3$, we can prove that the equality between the spectra implies that $\Sigma$ is totally geodesic by the folowing argument:\\
Let $\g\in \G$, and consider $A$ its axis in $\Sigma$. Then for all $p\in A$ we have
$$\ell_\Sigma(\g) = d_\Sigma (\g p, p) \geq d_{\Hyp^3}(\g p, p)\geq \ell_{\Hyp^3} (\g).$$
Since the two spectra are equal, these inequalities are equalities. In particular, it implies that $p$ lies in the axis of $\g$ in $\Hyp^3$. Therefore $A$ is a geodesic of $\Hyp^3$.

Let $c$ be the closed geodesic on $\Sigma$ represented by $g$ and consider $c'$ any geodesic that intersects $c$. Let $g'$ be a representative of this closed geodesic such that the axis $A'$ of $g'$ on $\Sigma$ intersects $A$. By similar computations as before, we see that $A'$ is a geodesic of $\Hyp^3$. 

Since the two geodesic intersects the endpoints of $A$ and $A'$ are cocyclic on the boundary of $\Hyp^3$, and in particular bounds a copy of $\Hyp^2$ inside $\Hyp^3$.  By similar arguments for any element $g\in \G$ such that its axis $A_g$ intersects $A$ and $A'$ we see that $A_g$ is a geodesic of $\Hyp^3$ and therefore that $A_g$ is included in the copy of $\Hyp^2$. This last fact implies that $\Sigma$ is included, therefore equal, to this copy of $\Hyp^2$ and finishes the proof of the corollary. 
\end{proof}

\subsubsection{A remark on length spectrum rigidity.}
As we said in the introduction, the proof of the last corollary rises the following question: If a quasi-Fuchsian has the same length spectrum as a negatively curved surface, is it Fuchsian ? Or more generally, if the two length spectra are proportional does it implie that it is Fuchsian ? The later question seems to be not known even if we suppose that the surface has constant negative curvature equal $-1$, and the problem in general seems to be quite hard. 

We answer the case of constant negative curvature:
\begin{theorem*}[\ref{th - spectre proportionel et constant curvature implique fuchsian}]
Let $M$ be a quasi-Fuchsian manifold and $\Sigma$ a hyperbolic (in the sense that it has constant curvature equal $-1$) surface. Suppose that $M$ and $\Sigma$ have proportional length spectrum (ie. there exists $k\in \R^+$ such that for all $\g \in \G, \, \ell_M(\g)=k \ell_\Sigma(\g)$), then $M$ is Fuchsian, $k=1$ and $\Sigma$ is isometric to the totally geodesic surface in $M$. 
\end{theorem*}
In this case we cannot use the entropy argument that is used when we suppose the equality of the two spectra. Our proof is inspired by the work F. Dal'bo and I. Kim \cite{dalbo2000criterion} and base by the following Theorem of Y. Benoist:

\begin{theorem}\cite{benoist1997asymptotiques}
Let $G$ be a semi-simple linear connected Lie group. Let $\G<G$ be a Zariski dense subgroup. Then the limit cone is convex with non-empty interior. 
\end{theorem}
The limit cone is the smallest closed cone of a Cartan subspace of $\mathfrak{g}$ containing $\log(\lambda(\G))$ where $\lambda(\g) $ is the Jordan projection. 

\begin{proof}[Proof of Theorem \ref{th - spectre proportionel et constant curvature implique fuchsian}]
Consider $\G$ a surface group and $\rho_{QF}$ a quasi-Fuchsian representation into $\PSL_2(\mathbb{C})$ and $\rho_0$ a Teichmüller representation in $\PSL_2(\R)$. Consider the diagonal representation: 
$$\rho = (\rho_{QF},\rho_0) \, :\, \G \tv \PSL_2(\mathbb{C})\times \PSL_2(\R).$$

The group $\PSL_2(\mathbb{C})\times \PSL_2(\R)$ is a semi-simple linear connected Lie group of rank $2$. The Jordan projection of an element $(\g_1,\g_2)$ is given by $(\ell_{\Hyp^3} (\g_1), \ell_{\Hyp^2} (\g_2))$ where $\ell_X$ is the translation length in $X$. 

Therefore if the two representations have proportional length spectrum, then the limit cone of $\rho(\G)$ is a line, in particular it has empty interior. Using Benoist's Theorem we conclude that $\rho(\G)$ is not Zariski dense, which implies that $M$ is Fuchsian.  Therefore the length spectrum of $\Sigma$ is $k$ times the length sprectrum of the hyperbolic surface $\Sigma_0=\Hyp^2/\rho(\G)$. By Otal's Theorem \cite{otal1990spectre} we get that $(\Sigma, g) =(\Sigma_0, k^2g_\Hyp)$, hence since $\Sigma$ is hyperbolic, we have $k=1$ and $\Sigma=\Sigma_0.$. 

\end{proof}


\begin{thebibliography}{GdlH90}

\bibitem[Ben97]{benoist1997asymptotiques}
Yves Benoist. 
\newblock Propriétés asymptotiques of groupes linéaires.
\newblock{\em Geom. and Funct. Anal.} 7 (1997), no. 1, 1–47.


\bibitem[BCG95]{besson1995entropies}
G{\'e}rard Besson, Gilles Courtois, and Sylvestre Gallot.
\newblock Entropies et rigidit{\'e}s des espaces localement sym{\'e}triques de
  courbure strictement n{\'e}gative.
\newblock {\em Geometric and functional analysis}, 5(5):731--799, 1995.

\bibitem[Bow79]{bowen1979hausdorff}
Rufus Bowen.
\newblock Hausdorff dimension of quasi-circles.
\newblock {\em Publications Math{\'e}matiques de l'IH{\'E}S}, 50(1):11--25,
  1979.

\bibitem[BT00]{bridgeman2000length}
Martin Bridgeman and Edward~C Taylor.
\newblock Length distortion and the hausdorff dimension of limit sets.
\newblock {\em American Journal of Mathematics}, pages 465--482, 2000.



\bibitem[DK00]{dalbo2000criterion}
Françoise Dal'bo, Inkang Kim
\newblock A critierion of conjugacy for Zariski dense subgroups
\newblock {\em Compte Rendu Acad. Sci. Paris}  t. 330, Série I, p 647-650, 2000.


\bibitem[GdlH90]{ghys1990espaces}
{\'E}tienne Ghys and Pierre de~la Harpe.
\newblock Espaces m{\'e}triques hyperboliques.
\newblock In {\em Sur les groupes hyperboliques d'apres Mikhael Gromov}, pages
  27--45. Springer, 1990.

\bibitem[Ham02]{hamenstadt2002ergodic}
Ursula Hamenst{\"a}dt.
\newblock Ergodic properties of function groups.
\newblock {\em Geometriae Dedicata}, 93(1):163--176, 2002.

\bibitem[Kif90]{kifer1990large}
Yuri Kifer.
\newblock Large deviations in dynamical systems and stochastic processes.
\newblock {\em Transactions of the American Mathematical Society},
  321(2):505--524, 1990.
  
\bibitem[Kin73]{kingman1973subadditive}
John F. C. Kigsman
\newblock Subadditive Ergodic Theory
\newblock {\em Ann. Probability} Vol. 1 (6): 883--889, 1973. 


  
  

\bibitem[Kni95]{knieper1995volume}
Gerhard Knieper.
\newblock Volume growth, entropy and the geodesic stretch.
\newblock {\em Math. Res. Lett}, 2:39--58, 1995.

\bibitem[Nic89]{nicholls1989ergodic}
Peter~J Nicholls.
\newblock {\em The ergodic theory of discrete groups}, volume 143.
\newblock Cambridge University Press, 1989.


\bibitem[Otal90]{otal1990spectre}
Jean-Pierre Otal, 
\newblock Le spectre marqué des longueurs des surfaces à courbure négative
\newblock {\em Ann. of Math.} (2) 131 (1990), no 1, 151-162.




\bibitem[Pat76]{patterson1976limit}
Samuel~J Patterson.
\newblock The limit set of a fuchsian group.
\newblock {\em Acta mathematica}, 136(1):241--273, 1976.

\bibitem[PPS15]{paulin2015equilibrium}
Fréderic Paulin, Marc Pollicott and Barbara Schapira
\newblock Equilibrium states in negative curvature.
\newblock {\em Astérisque}, volume 373, 2015.

\bibitem[Qui06]{quint2006overview}
JF~Quint.
\newblock An overview of patterson-sullivan theory.
\newblock In {\em Workshop The barycenter method, FIM, Zurich}, 2006.
\url{https://www.math.u-bordeaux.fr/~jquint/publications/courszurich.pdf}

\bibitem[Rob03]{roblin2003ergodicite}
Thomas Roblin.
\newblock Ergodicit{\'e} et {\'e}quidistribution en courbure n{\'e}gative.
\newblock {\em M{\'e}moire de la Soci{\'e}t{\'e} math{\'e}matique de France},
  (95):A--96, 2003. 
  
\bibitem[Sul79]{sullivan1979density}
Dennis Sullivan.
\newblock The density at infinity of a discrete group of hyperbolic motions.
\newblock {\em Publications Math{\'e}matiques de l'IH{\'E}S}, 50:171--202,
  1979.


\end{thebibliography}
\end{document}